\newtheorem{lemma}{Lemma}[section]
\newtheorem{theorem}{Theorem}[section]
\newtheorem{remark}{Remark}[section]
\newtheorem{proposition}{Proposition}[section]
\numberwithin{equation}{section}
\begin{document}

\title[Strong error analysis of Euler methods for overdamped GLEs]{Strong error analysis of Euler methods for overdamped generalized Langevin equations with fractional noise:\ Nonlinear case}

\author{Xinjie Dai}
\address{Institute of Computational Mathematics and Scientific/Engineering Computing, Academy of Mathematics and Systems Science, Chinese Academy of Sciences, Beijing 100190, China}
\email{dxj@lsec.cc.ac.cn}

\author{Jialin Hong}
\address{Institute of Computational Mathematics and Scientific/Engineering Computing, Academy of Mathematics and Systems Science, Chinese Academy of Sciences, Beijing 100190, China; School of Mathematical Sciences, University of Chinese Academy of Sciences, Beijing 100049, China}
\email{hjl@lsec.cc.ac.cn}

\author{Derui Sheng}
\address{Institute of Computational Mathematics and Scientific/Engineering Computing, Academy of Mathematics and Systems Science, Chinese Academy of Sciences, Beijing 100190, China; School of Mathematical Sciences, University of Chinese Academy of Sciences, Beijing 100049, China}
\email{sdr@lsec.cc.ac.cn (Corresponding author)}

\author{Tau Zhou}
\address{Institute of Computational Mathematics and Scientific/Engineering Computing, Academy of Mathematics and Systems Science, Chinese Academy of Sciences, Beijing 100190, China; School of Mathematical Sciences, University of Chinese Academy of Sciences, Beijing 100049, China}
\email{zt@lsec.cc.ac.cn}

\thanks{This work is supported by National key R\&D Program of China under Grant No.\ 2020YFA0713701, and National Natural Science Foundation of China (Nos.\ 11971470, 11871068, 12031020, 12022118).}

\keywords{generalized Langevin equation, fractional Brownian motion, singular kernel, fast Euler method, multilevel Monte Carlo simulation, Malliavin calculus}

\begin{abstract}
This paper considers the strong error analysis of the Euler and fast Euler methods for nonlinear overdamped generalized Langevin equations driven by the fractional noise. The main difficulty lies in handling the interaction between the fractional Brownian motion and the singular kernel, which is overcome by means of the Malliavin calculus and fine estimates of several multiple singular integrals. Consequently, these two methods are proved to be strongly convergent with order nearly $\min\{2(H+\alpha-1), \alpha\}$, where $H \in (1/2,1)$ and $\alpha\in(1-H,1)$ respectively characterize the singularity levels of fractional noises and singular kernels in the underlying equation. This result improves the existing convergence order $H+\alpha-1$ of Euler methods for the nonlinear case, and gives a positive answer to the open problem raised in \cite{FangLi2020}. As an application of the theoretical findings, we further investigate the complexity of the multilevel Monte Carlo simulation based on the fast Euler method, which turns out to behave better performance than the standard Monte Carlo simulation when computing the expectation of functionals of the considered equation. 
\end{abstract}

\maketitle

\textit{AMS subject classifications}: 65C20, 65C30, 65C05, 60H07

\section{Introduction} 

The \textit{generalized Langevin equation} (GLE) was originally introduced by \cite{Mori1965} and later used extensively to describe the subdiffusion within a single protein molecule \cite{Kou2008, KouSunneyXie2004}, the motion of microparticles moving randomly in viscoelastic fluids \cite{DidierNguyen2020, McKinleyNguyen2018}, and so on. To be specific, the position $x (t)$ of a moving particle with mass $m$ in the energy potential $V$ at time $t$ can be modelled by the GLE
\begin{align*}
m \ddot x(t) = - \nabla V(x(t)) - \int_0^t K(t-s) \dot x(s) \mathrm d s + \eta(t).
\end{align*}
Here, dot denotes the derivative on time, and the convolutional kernel $K(t)$ of the friction (dissipation) is related to the random force (fluctuation) $\eta(t)$ through the \textit{fluctuation--dissipation theorem} (FDT)
\begin{align*}
\mathbb E \big[ \eta(t) \eta(s) \big] = k_{B} T_A K(t-s), \quad \mbox{for } s \leq t,
\end{align*}
where $k_{B}$ is Boltzmann's constant and $T_A$ is the absolute temperature (see e.g., \cite{Kubo1966}). To capture the ubiquitous memory phenomena in biology and physics, the fluctuation $\eta(t)$ is often characterized by the fractional noise, and then the FDT reveals the memory kernel $K(t)$ being proportional to a power law $t^{-\alpha}$ with some $\alpha > 0$ (see e.g., \cite{Kou2008, KouSunneyXie2004}). In the `overdamped' regime ($m\ll 1$), the GLE with fractional noise reduces to the following fractional \textit{stochastic differential equation} (SDE)
\begin{align} \label{eq.CaputoFSDE}
D_c^{\alpha} x(t) = b(x(t)) + \sigma \dot W_H(t),
\end{align}
where $b := -\nabla V$, $D_c^{\alpha} x(t) := \frac{1}{\Gamma (1-\alpha)} \int_0^t (t-s)^{-\alpha} \dot x (s) \mathrm d s$ is the Caputo fractional derivative with $\Gamma$ being the Gamma function, $\sigma > 0$ is the noise intensity, and $W_H$ denotes the \textit{fractional Brownian motion} (fBm) with Hurst index $H\in(1/2,1)$. Eq.\ \eqref{eq.CaputoFSDE}, also known as the overdamped GLE with fractional noise, shall be mathematically interpreted by its integral form (see \eqref{eq.GLE} for details). It is a class of \textit{stochastic Volterra integral equations} (SVIEs), and we refer to \cite{LiLiu2019, LiLiu2017} for more theoretical results on the well-posedness and long-time behavior of the exact solution.

This paper is concerned with discrete-time simulations of Eq.\ \eqref{eq.CaputoFSDE}, in view of the absence of closed-form solutions. Generally, the memory kernels will result in expensive costs when performing standard time discretizations such as the Euler method. As an appropriate candidate, the fast Euler method shares a satisfactory computational efficiency, which is constructed by combining the Euler method with the sum-of-exponentials approximation. The prerequisite of the strong error analysis of the fast Euler method is to estimate the strong error of the Euler method. Following the arguments in \cite[Proposition 3.1]{FangLi2020}, the strong convergence order $H+\alpha-1$ of the Euler method is available for Eq.\ \eqref{eq.CaputoFSDE} with $\alpha\in(1-H,1)$, which exactly coincides with the mean square H\"{o}lder continuity exponent of the exact solution. Clearly, one can expect a higher convergence order in terms of Eq.\ \eqref{eq.CaputoFSDE} since the driven noise is additive. For the harmonic potential case, which corresponds to the linear external force case, it is firstly proved in \cite{FangLi2020} that the Euler method applied to Eq.\ \eqref{eq.CaputoFSDE} with $\alpha=2-2H$ is strongly convergent with the sharp order nearly $\min\{3-3H,3/2-H\}$. This convergence order result was later extended by \cite{DaiXiao2021} for general $\alpha\in(1-H,1)$. For the nonlinear external force case, the authors of \cite[Page 440]{FangLi2020} left the improvement of the strong convergence order of the Euler method as an open problem, which is exactly one main goal of the present paper.

The key-point of the strong convergence analysis for the Euler method consists in the upper bound estimate of
\begin{align} \label{introQuantity}
\mathbb{E}\left[ b'\big( \xi_{s}^{\theta} \big) b'\big( \xi_{\tau}^{\lambda} \big) \big(G(s) - G(\hat{s})\big) \big(G(\tau) - G(\hat{\tau})\big) \right],
\end{align}
where $\xi_{s}^\theta := (1-\theta) x(\hat{s}) + \theta x(s)$ with $\hat{s}$ denoting the maximal grid point before $s$, and $G(\cdot) := \frac{\sigma}{\Gamma(\alpha)} \int_0^\cdot (\cdot-s)^{\alpha-1} \mathrm{d}W_H(s)$ is a singular stochastic integral; see Proposition \ref{lem.covGt} for more related notations. In contrast with the cases of SDEs with fBm and SVIEs with standard Brownian motion, the treatment of \eqref{introQuantity} is more difficult due to the interaction between the fBm and singular kernels, even for the linear external force case; see \cite[Lemma 1]{DaiXiao2021} for more details. For the nonlinear external force case, we adopt the dual formula in Malliavin calculus to convert stochastic integrals in \eqref{introQuantity} into deterministic ones, whose integrands involve the first and second order Malliavin derivatives of the exact solution. It turns out that the Malliavin derivatives of the exact solution are bounded by some quantities associated to the corresponding singular kernel, rather than by some constant in the case of SDEs (see e.g., \cite{KloedenNeuenkirch2011}).
Consequently, more complicated multiple singular integrals need to be handled in our case. By delicately estimating these multiple singular integrals, we attain the strong convergence order nearly $\min\{2(H+\alpha-1), \alpha\}$ of the Euler method for Eq.\ \eqref{eq.CaputoFSDE} with $\alpha\in(1-H,1)$. On this basis, we can also read that the strong error of the fast Euler method with tolerance $\epsilon$ is bounded by that of the Euler method plus multiples of $\epsilon$. The new strong convergence order nearly $\min\{2(H+\alpha-1), \alpha\}$ improves the existing convergence order $H+\alpha-1$ of Euler methods in the nonlinear case, and particularly reproduces the corresponding result of \cite[Theorem 1]{KloedenNeuenkirch2011} for SDEs with fBm.


An important application of strong error analysis of numerical methods is to analyze the complexity of the \textit{multilevel Monte Carlo} (MLMC) simulation that was originally developed by Giles \cite{Giles2008} to approximate the expectation of functionals of SDEs with standard Brownian motion. Compared with the standard Monte Carlo simulation, the MLMC simulation has better performance when computing such quantities, and has been applied successively to different equations with various noises (see \cite{KloedenNeuenkirch2011} for SDEs with fBm, \cite{RichardTan2021} for SVIEs with standard Brownian motion, and so on). By computing corrections using multiple levels of grids, the MLMC simulation reduces the variance of the estimator to achieve the fine grid accuracy at a relatively low cost, where the variance has a close relationship with the strong convergence order of the time discretization. Taking into account that the fast Euler method is more efficient than the Euler method, we apply the MLMC simulation based on the fast Euler method to compute the expectation of functionals of Eq.\ \eqref{eq.CaputoFSDE}. The corresponding complexity analysis is meanwhile investigated; see Theorem \ref{thm.MLMCComplexity} for more details.

The paper is organized as follows. Section \ref{sec.mainRes} presents the theoretical findings on the Euler method, fast Euler method and MLMC simulation when they are applied to Eq.\ \eqref{eq.CaputoFSDE}. In order to facilitate the proof of main results, we study the first and second order Malliavin derivatives of the exact solution, and establish the estimates of several multiple singular integrals in Section \ref{sec.Preli}. Section \ref{sec.Proof} provides the detailed proof of main results. 

\textbf{Notations.} Denote $a \vee b := \max\{a, b\}$ and $a \wedge b := \min\{a, b\}$ for $a,\, b \in \mathbb{R}$. For the integer $m \geq 1$, denote by $C_b^m$ the space of not necessarily bounded real-valued functions that have continuous and bounded derivatives up to order $m$, and by $C_p^m$ the space of $m$ times continuously differentiable real-valued functions whose derivatives up to order $m$ are of at most polynomial growth. For the integer $l \geq 2$, let $C_{b,p}^{1,l} := C_b^1 \cap C_p^l$. For any $q\geq1$, $\|\cdot\|_{q}$ denotes the $L^{q}(\Omega;\mathbb R)$-norm, and particularly $\|\cdot\| := \|\cdot\|_2$. Denote by $\langle \cdot, \cdot \rangle$ the $L^{2}(\Omega;\mathbb R)$-inner product. Let $\mathbf{1}_{S} (\cdot)$ be the indicator function of the set $S$. Use $C$ as a generic constant and use $C(\cdot)$ if necessary to mention the parameters it depends on, whose values are always independent of the stepsize $h$ and may change when it appears in different places.

\section{Main results} \label{sec.mainRes}

Throughout this paper, we restrict ourselves to the case of 1-dimension for the simplicity of notations, and remark that all results could be extended to the multi-dimension case.
Then the overdamped GLE \eqref{eq.CaputoFSDE} is mathematically interpreted by \begin{align}\label{eq.GLE}
x(t) = x_0 + \frac{1}{\Gamma(\alpha)}\int_0^{t} (t-s)^{\alpha-1} b(x(s)) \mathrm{d}s + \frac{\sigma}{\Gamma(\alpha)} \int_0^t (t-s)^{\alpha-1} \mathrm{d}W_H(s) 
\end{align}
for $t \in [0, T]$, where $W_H$ is a 1-dimensional fBm on some complete filtered probability space $(\Omega,\mathscr F,\{\mathscr F_t\}_{t\in[0,T]}, \mathbb P)$. We always assume that
$b:\mathbb R\rightarrow \mathbb R$ is a Lipschitz continuous function and the initial value $x_0\in\mathbb R$ is deterministic. In this setting, Eq.\ \eqref{eq.GLE} has a unique strong solution for $H\in(1/2,1)$ and $\alpha\in(1-H,1)$ (see \cite[Theorem 1]{LiLiu2017}).

In order to solve Eq.\ \eqref{eq.GLE} numerically, we introduce the Euler method and fast Euler method, where the latter is more computationally efficient. Meanwhile, the corresponding strong error analysis is established.

\subsection{Euler method and fast Euler method}

For a fixed integer $N \geq 2$, let $\{ t_n := n h, \, n= 0, 1, \cdots, N \}$ be a uniform partition of $[0,T]$ with the stepsize $h := T/N$. As introduced in \cite{FangLi2020}, the Euler method for Eq.\ \eqref{eq.GLE} can be formulated as
\begin{align}\label{eq.EM}
x_n = x_0 + \frac{1}{\Gamma(\alpha)} \sum_{j=1}^{n} b(x_{j-1}) \int_{t_{j-1}}^{t_j} (t_n-s)^{\alpha-1} \mathrm{d}s + G(t_n),
\end{align}
for $n = 1,2,\cdots,N$, in which
\begin{align}\label{eq.Gt}
G(t_n) := \frac{\sigma}{\Gamma(\alpha)} \int_0^{t_n} (t_n-s)^{\alpha-1} \mathrm{d}W_H(s).
\end{align}
Now we are in a position to present our first main result on the strong convergence order of the Euler method \eqref{eq.EM} for Eq.\ \eqref{eq.GLE} in Theorem \ref{thm.mainEM}, whose proof is deferred to Section \ref{sec.Proof}.

\begin{theorem} \label{thm.mainEM}
Let $H \in (1/2, 1)$ and $\alpha \in (1-H, 1)$. If $b \in C_{b,p}^{1,3}$, then there exists some positive constant $C$ such that the strong error of the Euler method \eqref{eq.EM} can be controlled as
\begin{align*} 
\sup\limits_{n \leq T/h} \| x_n - x(t_n) \| \leq C \mathcal{R}_{H, \alpha} (h),
\end{align*}
where
\begin{align*}
\mathcal{R}_{H, \alpha} (h) :=
\begin{cases}
h^{2(H+\alpha-1)}, \quad &\mbox{if } \alpha \in (1-H,2-2H), \\
(|\ln h| \vee \ln T) h^{2-2H}, \quad &\mbox{if } \alpha = 2-2H, \\
h^{\alpha}, \quad &\mbox{if } \alpha \in (2-2H,1).
\end{cases}
\end{align*}
\end{theorem}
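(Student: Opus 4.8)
The plan is to exploit the fact that the scheme \eqref{eq.EM} reproduces the noise term $G(t_n)$ of \eqref{eq.Gt} \emph{exactly}, so that the only error comes from discretizing the drift, and then to extract the improved order from a sharp second-moment analysis of the resulting consistency error. Subtracting \eqref{eq.EM} from \eqref{eq.GLE} at $t=t_n$ gives
\[
x_n - x(t_n) = \frac{1}{\Gamma(\alpha)}\sum_{j=1}^{n}\int_{t_{j-1}}^{t_j}(t_n-s)^{\alpha-1}\big[b(x_{j-1}) - b(x(s))\big]\,\mathrm{d}s .
\]
Writing $b(x_{j-1}) - b(x(s)) = \big[b(x_{j-1}) - b(x(t_{j-1}))\big] + \big[b(x(t_{j-1})) - b(x(s))\big]$ separates a \emph{propagated} error from a \emph{consistency} error $R_n := \frac{1}{\Gamma(\alpha)}\int_0^{t_n}(t_n-s)^{\alpha-1}\big[b(x(\hat s)) - b(x(s))\big]\,\mathrm{d}s$, where $\hat s$ is the grid point preceding $s$. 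Bounding the propagated error via the Lipschitz estimate $|b(x_{j-1}) - b(x(t_{j-1}))| \le L\,|x_{j-1} - x(t_{j-1})|$ and applying a weakly singular (Henry-type) discrete Gronwall inequality adapted to the kernel $(t_n-s)^{\alpha-1}$, I would reduce the whole problem to establishing $\sup_{n}\|R_n\| \le C\,\mathcal{R}_{H,\alpha}(h)$.

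To analyze $R_n$ I would apply the mean value theorem, $b(x(\hat s)) - b(x(s)) = -b'(\xi_s^\theta)\,\big(x(s) - x(\hat s)\big)$ with $\xi_s^\theta$ as in \eqref{introQuantity}, and decompose the increment $x(s) - x(\hat s)$ into its drift part $\Delta_b(s)$ and its noise part $G(s) - G(\hat s)$. The drift increment is an increment of a fractional integral of the $L^q$-bounded process $b(x(\cdot))$ and is therefore of mean-square size $h^{\alpha}$, so every term of $\mathbb{E}[R_n^2]$ carrying a factor $\Delta_b$ is controlled, after Cauchy--Schwarz against the integrable kernel, by $h^{\alpha}$ and cannot exceed the stated rate. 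The genuinely delicate object is the noise--noise contribution: the double integral over $(s,\tau)\in[0,t_n]^2$ of $(t_n-s)^{\alpha-1}(t_n-\tau)^{\alpha-1}$ against exactly the expectation \eqref{introQuantity}. Here a crude Cauchy--Schwarz bound on \eqref{introQuantity}, namely $\|G(s)-G(\hat s)\|\,\|G(\tau)-G(\hat\tau)\| \lesssim h^{2(H+\alpha-1)}$, when integrated only reproduces the known order $H+\alpha-1$; the extra order must be recovered from the cancellation built into the second-difference structure of the increment covariance.

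The main step, and the hard part, is the sharp estimate of \eqref{introQuantity}. Since $b'(\xi_s^\theta)$ and $b'(\xi_\tau^\lambda)$ are correlated with the Gaussian increments, I would represent each increment $G(s)-G(\hat s)$ as a Skorohod integral and invoke the Malliavin duality (integration-by-parts) formula to transfer the two integrators onto the smooth coefficients, turning \eqref{introQuantity} into a finite sum of \emph{deterministic} integrals whose integrands pair the singular kernels and the fractional-noise covariance density (of order $|u-v|^{2H-2}$) with first- and second-order Malliavin derivatives of the solution. As auxiliary results I would first show that $D_r x(s)$ and $D_{r_1}D_{r_2}x(s)$ exist and are dominated by singular-kernel quantities such as $(s-r)^{\alpha-1}$, rather than by constants as for ordinary SDEs driven by fBm; this additional singularity is exactly what makes the bookkeeping heavy. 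The resulting multiple singular integrals, combined with the covariance identity of Proposition \ref{lem.covGt}, have a rate governed by the competition between the noise exponent $2(H+\alpha-1)$ and the drift exponent $\alpha$: for $\alpha\in(1-H,2-2H)$ the noise--noise term dominates with rate $h^{2(H+\alpha-1)}$; at the critical value $\alpha=2-2H$ one of the inner radial integrals becomes logarithmically divergent, producing the factor $(|\ln h|\vee\ln T)$; and for $\alpha\in(2-2H,1)$ the drift increment of size $h^{\alpha}$ dominates and the rate saturates at $h^{\alpha}$. Assembling the noise, drift and cross contributions yields $\sup_n\|R_n\|\le C\,\mathcal{R}_{H,\alpha}(h)$, and feeding this back through the Gronwall reduction gives the theorem. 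I expect the principal obstacle to be precisely this case analysis of the multiple singular integrals — tracking which powers remain integrable near the diagonal $u=v$ and near the endpoints, and isolating the borderline integral responsible for the logarithm.
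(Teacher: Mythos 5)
Your proposal follows essentially the same route as the paper's proof: reduction via a singular Gr\"onwall inequality to the consistency error $R_n$, splitting the increment into drift and noise parts, Malliavin duality (Skorohod integration by parts) to turn the key expectation \eqref{introQuantity} into deterministic multiple singular integrals weighted by Malliavin-derivative bounds of order $(s-r)^{\alpha-1}$ (the paper's Theorem \ref{thm.Malliavin} and Proposition \ref{lem.covGt}), and a case analysis of those integrals yielding the three regimes of $\mathcal{R}_{H,\alpha}(h)$ with the logarithm arising at the borderline $\alpha=2-2H$. The one detail worth flagging is that for the off-diagonal sum $\sum_{i<j}\langle I^i_{n,2},I^j_{n,2}\rangle$ to close, the sharp bound on \eqref{introQuantity} must carry the integrable factor $(\tau-s)^{2H-2}$, as in Proposition \ref{lem.covGt}; your plan implicitly relies on this but does not state it.
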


\begin{remark}
For the case $\alpha=1$, the model \eqref{eq.GLE} reduces to the SDE
\begin{align}\label{eq.SDE}
\mathrm{d} x(t) = b(x(t)) \mathrm{d}t + \sigma \mathrm{d} W_H(t),\qquad t\in[0,T]
\end{align}
with $x(0)=x_0$. When $\alpha$ tends to $1$, Theorem \ref{thm.mainEM} reproduces Theorem 1 in \cite{KloedenNeuenkirch2011}, which presented that the Euler method for Eq.\ \eqref{eq.SDE} is of first-order strong convergence under a slightly stronger assumption $b \in C_b^3$. While for the case $\alpha\in(1-H,2-2H)$, Theorem \ref{thm.mainEM} indicates that the strong convergence order $2(H+\alpha-1)$ of the Euler method is twice of the mean square H\"older continuity exponent of the exact solution (see \cite[Lemma 2.3]{FangLi2020}). The new strong convergence order nearly $\min\{2(H+\alpha-1), \alpha\}$ improves the existing convergence order $H+\alpha-1$ of the Euler method in the nonlinear case (see \cite[Proposition 3.1]{FangLi2020}).
\end{remark}

Since the overdamped GLE \eqref{eq.GLE} is an SVIE with memory, the Euler method \eqref{eq.EM} needs the computational cost of $\mathcal{O}(N^2)$ for a single sample path, which is too expensive in practical calculations. To improve the computational efficiency, \cite{FangLi2020} proposed the fast Euler method by using the following sum-of-exponentials approximation.

\begin{lemma}[Sum-of-exponentials approximation \cite{FangLi2020, JiangZhang2017}]
For $\alpha\in(0,1)$, tolerance $\epsilon > 0$ and truncation $\kappa > 0$, there exist positive numbers $\tau_i$ and $\omega_i$ with $1 \leq i \leq M_{\exp}$ such that
\begin{align}\label{eq.SOE}
\Big| t^{\alpha - 1} - \sum_{i=1}^{M_{\exp}} \omega_i e^{-\tau_i t} \Big| \leq \epsilon, \qquad \forall\, t\in[\kappa,T],
\end{align}
where
\begin{align*}
M_{\exp} = \mathcal{O} \left(\log\frac{1}{\epsilon} \Big(\log\log\frac{1}{\epsilon} + \log\frac{T}{\kappa}\Big)
+ \log\frac{1}{\kappa}\Big(\log\log\frac{1}{\epsilon} + \log\frac{1}{\kappa}\Big)\right).
\end{align*}
\end{lemma}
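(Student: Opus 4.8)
The plan is to reduce the uniform approximation of the algebraic kernel $t^{\alpha-1}$ by a finite sum of exponentials to the design of an efficient quadrature rule for a single parametrized integral. The starting point is the Gamma-function identity
\begin{align*}
t^{\alpha-1} = \frac{1}{\Gamma(1-\alpha)} \int_0^{\infty} s^{-\alpha} e^{-ts}\, \mathrm{d}s, \qquad \alpha\in(0,1),
\end{align*}
valid for all $t>0$, which realizes the power law as a continuous superposition of decaying exponentials $e^{-ts}$. Applying any quadrature rule $\int_0^\infty f(s)\,\mathrm{d}s \approx \sum_i w_i f(s_i)$ to this integral produces an approximant of the form $\sum_i \omega_i e^{-\tau_i t}$ with rates $\tau_i = s_i > 0$ and weights $\omega_i = w_i s_i^{-\alpha}/\Gamma(1-\alpha) > 0$. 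The task is thus to choose nodes and weights so that the quadrature error is at most $\epsilon$ \emph{uniformly} for $t\in[\kappa,T]$, while keeping the node count $M_{\exp}$ as small as possible.

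For the quadrature I would split $(0,\infty)$ into three regimes tuned to the two competing difficulties, namely the integrable singularity $s^{-\alpha}$ at $s=0$ and the $t$-dependent decay of $e^{-ts}$ as $s\to\infty$. On a far tail $[S,\infty)$ I would simply truncate; since $t\geq\kappa$, the discarded mass is bounded by $\int_S^\infty s^{-\alpha} e^{-\kappa s}\,\mathrm{d}s$, which falls below $\epsilon$ once $S$ is of order $\kappa^{-1}\log(1/\epsilon)$. On the bulk, away from both endpoints, I would use a dyadic decomposition into panels $[2^j,2^{j+1}]$ and apply Gauss--Legendre quadrature on each; since the integrand is analytic there, the per-panel error decays exponentially in the number of nodes, so $\mathcal{O}(\log(1/\epsilon))$ nodes per panel suffice. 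On the innermost region $[0,\delta]$, where $s^{-\alpha}$ is singular, I would instead use Gauss--Jacobi quadrature with weight $s^{-\alpha}$, which integrates the singular factor exactly and again converges spectrally in the remaining smooth factor $e^{-ts}$.

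Assembling the three contributions and optimizing the cutoffs against $\kappa$, $T$ and $\epsilon$ is what produces the stated node count. The number of dyadic panels needed to bridge the scales from the finest relevant level up to the truncation point $S$ is of order $\log(T/\kappa) + \log\log(1/\epsilon)$, and each panel carries $\mathcal{O}(\log(1/\epsilon))$ Gauss--Legendre nodes, which accounts for the first term $\log\frac1\epsilon\bigl(\log\log\frac1\epsilon + \log\frac T\kappa\bigr)$ of $M_{\exp}$. The specialized resolution of the singular region near $s=0$, where the panel widths must shrink down to a scale dictated by $\kappa$, requires of order $\log(1/\kappa)$ further panels carrying $\mathcal{O}(\log(1/\kappa) + \log\log(1/\epsilon))$ Gauss--Jacobi nodes, producing the second term $\log\frac1\kappa\bigl(\log\log\frac1\epsilon + \log\frac1\kappa\bigr)$. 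Summing these counts gives the asserted bound on $M_{\exp}$, with all $\tau_i,\omega_i>0$ by construction; the detailed bookkeeping of the optimal cutoffs follows \cite{JiangZhang2017}.

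The main obstacle is to secure the error bound \eqref{eq.SOE} uniformly over the wide dynamic range $t\in[\kappa,T]$ rather than at a single scale. The difficulty is that the two endpoints of the range pull in opposite directions: for $t$ near $T$ the integrand $s^{-\alpha} e^{-ts}$ is sharply concentrated at the singularity $s=0$ and demands fine resolution there, whereas for $t$ near $\kappa$ it decays only slowly in $s$ and demands that the tail be integrated out to a large $S$. Controlling both with one fixed node set—so that the truncation and quadrature errors are below $\epsilon$ for \emph{every} $t$ in the range simultaneously—is the crux, and it is precisely this balancing of the near-zero and far-field resolutions against $\kappa$, $T$ and $\epsilon$ that forces the two-term structure of $M_{\exp}$. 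The supporting error estimates, namely the exponential convergence rates of Gauss--Legendre and Gauss--Jacobi quadrature for analytic integrands, are standard, and I would invoke them rather than re-derive them.
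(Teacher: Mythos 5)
The paper offers no proof of this lemma at all --- it is imported verbatim from \cite{FangLi2020,JiangZhang2017} --- so the only meaningful comparison is with the construction in those references, and your proposal is exactly that construction: the Gamma-integral representation $t^{\alpha-1}=\frac{1}{\Gamma(1-\alpha)}\int_0^\infty s^{-\alpha}e^{-ts}\,\mathrm{d}s$, truncation of the tail at $S=\mathcal{O}\bigl(\kappa^{-1}\log\frac1\epsilon\bigr)$ using $t\ge\kappa$, Gauss--Legendre on dyadic panels, and Gauss--Jacobi with weight $s^{-\alpha}$ near the origin. The reduction to quadrature, the positivity of the $\tau_i,\omega_i$, and your identification of uniformity over $t\in[\kappa,T]$ as the crux are all correct.

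One inaccuracy in your bookkeeping deserves fixing, although it does not change the final count. You attribute the second term $\log\frac1\kappa\bigl(\log\log\frac1\epsilon+\log\frac1\kappa\bigr)$ to the singular region near $s=0$, claiming the panel widths there must shrink to a scale dictated by $\kappa$. In fact the resolution near $s=0$ is dictated by $T$, not $\kappa$: for $t\le T$ the factor $e^{-ts}$ varies on scale $1/T$, so a \emph{single} Gauss--Jacobi rule on $[0,\mathcal{O}(1/T)]$ with $\mathcal{O}\bigl(\log\frac1\epsilon\bigr)$ nodes suffices there (this is also what your own ``main obstacle'' paragraph correctly says, in contradiction with your bookkeeping paragraph). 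The $\log\frac1\kappa$ factors instead arise at the far field: on a dyadic panel $[a,2a]$ with $1\le a\le S\sim\kappa^{-1}\log\frac1\epsilon$, the quadrature error carries the amplitude factor $a^{1-\alpha}\lesssim\bigl(\kappa^{-1}\log\frac1\epsilon\bigr)^{1-\alpha}$, which forces $\mathcal{O}\bigl(\log\frac1\epsilon+\log\frac1\kappa+\log\log\frac1\epsilon\bigr)$ Gauss--Legendre nodes per panel rather than the $\mathcal{O}\bigl(\log\frac1\epsilon\bigr)$ you assert; multiplied by the $\mathcal{O}\bigl(\log\frac1\kappa+\log\log\frac1\epsilon\bigr)$ such panels, this reproduces both terms of $M_{\exp}$. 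If your accounting were followed literally, the quadrature error on the panels near $s\approx S$ would not fall below $\epsilon$ uniformly down to $t=\kappa$. Since you defer the detailed bookkeeping to \cite{JiangZhang2017} anyway, this is a repairable misattribution rather than a flaw in the approach.
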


Using the sum-of-exponentials approximation \eqref{eq.SOE} with a given tolerance $\epsilon \ll 1$, the Euler method \eqref{eq.EM} can be directly modified to
\begin{align*}
y_n &= y_0 + \sum_{j=1}^{n-1} \frac{b(y_{j-1})}{\Gamma(\alpha)} \int_{t_{j-1}}^{t_j} \sum_{i=1}^{M_{\exp}} \omega_i e^{-\tau_i (t_n-s)} \mathrm{d}s + \frac{b(y_{n-1})}{\Gamma(\alpha)} \int_{t_{n-1}}^{t_n} (t_n-s)^{\alpha-1} \mathrm{d}s + G(t_n).
\end{align*}
Then, exchanging the summations order obtains the fast Euler method
\begin{align}\label{eq.FEM}
y_n &= y_0 + \sum_{i=1}^{M_{\exp}} \omega_i \zeta_i^n + \frac{h^{\alpha}}{\Gamma(\alpha+1)} b(y_{n-1})+ G(t_n),
\end{align}
where
\begin{align*}
\zeta_i^n :=
\begin{cases}
 0, & \mbox{if } n=1, \\
 \frac{1}{\Gamma(\alpha)} \sum_{j=1}^{n-1} b(y_{j-1}) \int_{t_{j-1}}^{t_j} e^{-\tau_i (t_n-s)} \mathrm{d}s, & \mbox{if } n \geq 2
\end{cases}
\end{align*}
satisfies the recurrence formula
\begin{align*}
\zeta_i^{n+1} = e^{-\tau_i h} \zeta_i^n + \frac{1}{\tau_i \Gamma(\alpha)} ( e^{-\tau_i h} - e^{-2\tau_i h} ) b(y_{n-1}).
\end{align*}

If one takes $\epsilon = h^{\alpha}$ and $\kappa = h$, then $M_{\exp} = \mathcal{O} ( (\log N)^2 )$, and the above recurrence formula indicates that the fast Euler method \eqref{eq.FEM} only needs a computational cost of $\mathcal{O} ( N(\log N)^2 )$ for a single sample path, so it is more efficient than the Euler method \eqref{eq.EM}; see also \cite{FangLi2020, JiangZhang2017} for more details. For the fast Euler method \eqref{eq.FEM}, we provide the following strong convergence theorem.

\begin{theorem} \label{thm.mainFEM}
Under the assumptions of Theorem \ref{thm.mainEM}, there exists a positive constant $C$ independent of $0 < \epsilon \ll 1$ such that the strong error of the fast Euler method \eqref{eq.FEM} can be bounded by
\begin{align*}
\sup\limits_{n \leq T/h} \| y_n - x(t_n) \| \leq C \mathcal{R}_{H, \alpha} (h) + C \epsilon,
\end{align*}
where $\mathcal{R}_{H, \alpha} (h)$ is defined in Theorem \ref{thm.mainEM}.
\end{theorem}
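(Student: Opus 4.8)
The plan is to leverage Theorem \ref{thm.mainEM} through the triangle inequality
\[
\| y_n - x(t_n) \| \le \| y_n - x_n \| + \| x_n - x(t_n) \|,
\]
in which the second summand is already bounded by $C\mathcal{R}_{H,\alpha}(h)$. The decisive structural observation is that the Euler scheme \eqref{eq.EM} and the fast Euler scheme \eqref{eq.FEM} carry the \emph{identical} stochastic contribution $G(t_n)$; hence the difference $e_n := y_n - x_n$ is a purely drift-type perturbation stemming solely from replacing the memory kernel $(t_n-s)^{\alpha-1}$ by its sum-of-exponentials surrogate $\sum_{i=1}^{M_{\exp}} \omega_i e^{-\tau_i(t_n-s)}$ on the non-local indices $j \le n-1$, while the local index $j=n$ retains the exact kernel in both schemes. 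It therefore suffices to prove $\sup_{n \le T/h} \| e_n \| \le C\epsilon$ with $C$ independent of $h$ and $\epsilon$; the essential probabilistic/analytical difficulty having already been absorbed into Theorem \ref{thm.mainEM}, the task remaining here is a deterministic stability estimate.

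Subtracting \eqref{eq.EM} from \eqref{eq.FEM} (with $y_0 = x_0$, so $e_0 = 0$) and inserting $\pm b(x_{j-1})$ in the non-local sum, I would split $e_n$ into a \emph{kernel-approximation} part
\[
\mathrm{(I)} := \frac{1}{\Gamma(\alpha)} \sum_{j=1}^{n-1} b(x_{j-1}) \int_{t_{j-1}}^{t_j} \Big( \sum_{i=1}^{M_{\exp}} \omega_i e^{-\tau_i(t_n-s)} - (t_n-s)^{\alpha-1} \Big) \mathrm ds
\]
and \emph{drift-difference} parts collecting the factors $b(y_{j-1}) - b(x_{j-1})$ (the non-local ones weighted by the exponential surrogate, the local one $j=n$ by the exact kernel). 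For $\mathrm{(I)}$ the key point is that for $s \in [t_{j-1},t_j]$ with $j \le n-1$ one has $t_n - s \ge t_n - t_{n-1} = h$, so choosing the truncation $\kappa = h$ in \eqref{eq.SOE} bounds the bracket pointwise by $\epsilon$; since $\sum_{j=1}^{n-1} \int_{t_{j-1}}^{t_j} \mathrm ds \le T$ and $\sup_j \| b(x_{j-1}) \| \le C$, a Minkowski estimate yields $\| \mathrm{(I)} \| \le C\epsilon$ uniformly in $n$. The uniform moment bound $\sup_j \| b(x_{j-1}) \| \le C$ comes essentially for free: writing $b(x_{j-1}) = b(x(t_{j-1})) + \big( b(x_{j-1}) - b(x(t_{j-1})) \big)$ and using the Lipschitz property of $b$ together with Theorem \ref{thm.mainEM} and the finite moments of the exact solution controls it despite $b$ having only linear (not bounded) growth.

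For the drift-difference parts I would use $\| b(y_{j-1}) - b(x_{j-1}) \| \le L \| e_{j-1} \|$ with $L := \| b' \|_\infty < \infty$ (as $b \in C_b^1$), bound the exponential surrogate by $(t_n-s)^{\alpha-1} + \epsilon$, and abbreviate $w_k := \int_{kh}^{(k+1)h} u^{\alpha-1}\,\mathrm du$. Collecting the estimates produces the weakly singular discrete recursion
\[
\| e_n \| \le C\epsilon + \frac{L}{\Gamma(\alpha)} \sum_{j=1}^{n} w_{n-j} \| e_{j-1} \| + \frac{L\epsilon h}{\Gamma(\alpha)} \sum_{j=1}^{n-1} \| e_{j-1} \|,
\]
where $\sum_{k=0}^{n-1} w_k = t_n^\alpha/\alpha \le T^\alpha/\alpha$. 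Invoking a discrete fractional (weakly singular) Gronwall inequality — the summation analogue of Henry's inequality, whose constant is of Mittag-Leffler type — then gives $\sup_{n \le T/h} \| e_n \| \le C\epsilon$ with $C = C(T,\alpha,L)$, and combining with the first paragraph completes the proof.

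I expect the main obstacle to be making the final Gronwall step uniform in $h$: one must check that the accumulation of the $n \le T/h$ Abel-type weights $w_{n-j}$ does not cause the Gronwall constant to blow up as $h \to 0$ (guaranteed by the convolution bound $\sum_k w_k \le T^\alpha/\alpha$), and, equally importantly, that the propagated sum-of-exponentials error stays exactly of order $\epsilon$ rather than degrading to order $\epsilon/h$ or $\epsilon\,|\ln h|$. The latter hinges on the telescoping of the integration lengths $\sum_{j=1}^{n-1} h = t_{n-1} \le T$, which converts the $n-1$ pointwise errors of size $\epsilon h$ into a total of size $\epsilon T$; the small extra term $\tfrac{L\epsilon h}{\Gamma(\alpha)} \sum_{j} \| e_{j-1} \| \le \tfrac{L\epsilon T}{\Gamma(\alpha)} \sup_j \| e_j \|$ is harmless and is absorbed on the left for $\epsilon$ small.
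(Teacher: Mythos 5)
Your proposal is correct and takes essentially the same route as the paper: the paper's own proof simply combines Theorem \ref{thm.mainEM} with the stability argument of \cite[Theorem 4.4]{FangLi2020}, which is precisely your triangle-inequality decomposition into the Euler error plus the scheme difference, the pointwise sum-of-exponentials bound (valid because $t_n-s\geq h=\kappa$ for all non-local terms), and a weakly singular discrete Gr\"onwall estimate yielding $\sup_n\|y_n-x_n\|\leq C\epsilon$. In effect, you have supplied the details that the paper omits by citation.
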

\begin{proof}
Based on the result of Theorem \ref{thm.mainEM}, the proof can be completed similar to that of \cite[Theorem 4.4]{FangLi2020}. Thus, the details are omitted.
\end{proof}

\subsection{Multilevel Monte Carlo simulation}

We are also interested in the calculation of $\mathbb{E} [ f(x(T)) ]$ for some Lipschitz continuous function $f:\ \mathbb{R} \rightarrow \mathbb{R}$, because this quantity receives a lot of attention in applications (see e.g., \cite{Giles2015}). With Theorem \ref{thm.mainFEM} in hand, one can construct the MLMC simulation combined with the fast Euler method \eqref{eq.FEM} to compute $\mathbb{E} [ f(x(T)) ]$, where the MLMC simulation was originally proposed by Giles \cite{Giles2008} to improve the efficiency of Monte Carlo simulations.

Without loss of generality, we assume that $T = 1$ in this subsection. For a fixed integer $M \geq 2$, define different stepsizes $h_l = M^{-l} \ (l = 0, 1, \cdots, L)$. For convenience, denote
\begin{align} \label{eq.QPl}
Q = f(x(1)), \quad\quad P_l = f(y_{M^l}^{h_l}), \quad l = 0, 1, \cdots, L,
\end{align}
where $y_{M^l}^{h_l}$ is the approximation of $x(1)$ by using the fast Euler method \eqref{eq.FEM} with stepsize $h_l$ and tolerance $h_l^{\alpha}$. Based on the trivial identity
\begin{align*}
\mathbb{E} [P_L] = \mathbb{E} [P_0] + \sum_{l=1}^{L} \mathbb{E} [P_l - P_{l-1}],
\end{align*}
the MLMC estimator can be formulated as
\begin{align}\label{eq.MLMCFEM}
Z := \frac{1}{N_0} \sum_{n=1}^{N_0} P_0^{(n)} + \sum_{l=1}^{L} \frac{1}{N_l} \sum_{n=1}^{N_l} [P_l^{(n)} - P_{l-1}^{(n)}],
\end{align}
where $\frac{1}{N_0} \sum_{n=1}^{N_0} P_0^{(n)}$ with $N_0$ i.i.d.\ copies $\{P_0^{(n)}\}_{n=1}^{N_0}$ of $P_0$ is used to estimate $\mathbb{E} [P_0]$,\\$\frac{1}{N_l} \sum_{n=1}^{N_l} [P_l^{(n)} - P_{l-1}^{(n)}]$ with $N_l$ i.i.d.\ copies $\{P_l^{(n)}, P_{l-1}^{(n)}\}_{n=1}^{N_l}$ of $\{P_l, P_{l-1}\}$ is used to estimate $\mathbb{E} [P_l - P_{l-1}]$ for $l = 1,2,\cdots, L$, and the positive integers $L$ and $N_l\ (l = 0, 1, \cdots, L)$ are related to the following complexity theorem.

\begin{theorem} \label{thm.MLMCComplexity}
Let $H \in (1/2,1)$ and $\alpha = 2-2H$. Under the assumptions of Theorem \ref{thm.mainEM}, there exist suitable positive integers $L$ and $N_l\ (l = 0, 1, \cdots, L)$ such that the mean square error of the MLMC estimator \eqref{eq.MLMCFEM} can be controlled by a specified accuracy $\varepsilon$~$(0 < \varepsilon \ll 1)$, i.e.,
\begin{align*}
\| Z - \mathbb{E} [ f(x(1)) ] \| \leq \varepsilon
\end{align*}
with the computational cost $C_{\mbox{\tiny MLMC}}$ satisfying
\begin{align*}
C_{\mbox{\tiny MLMC}} \leq
\begin{cases}
C \varepsilon^{-2}, & \mbox{if } H \in (\frac{1}{2}, \frac{3}{4}), \\
C \varepsilon^{-2} |\ln \varepsilon|^6, & \mbox{if } H = \frac{3}{4}, \\
C \varepsilon^{-(2+\frac{4H-3}{2-2H-\rho})} |\ln \varepsilon|^4, & \mbox{if } H \in (\frac{3}{4}, 1),
\end{cases}
\end{align*}
where $\rho \in (0,1-H)$ can be arbitrary small, and the positive constant $C$ is independent of $\varepsilon$.
\end{theorem}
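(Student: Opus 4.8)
The plan is to reduce the statement to Giles' multilevel complexity theorem by verifying its three structural hypotheses --- a bias bound, a variance-decay bound for the level differences, and a per-sample cost bound --- and then to carry out the optimal allocation of the sample numbers $N_l$, keeping careful track of the logarithmic factors that separate the three regimes of $H$. Since the estimator $Z$ in \eqref{eq.MLMCFEM} is unbiased for $\mathbb{E}[P_L]$ and the levels are sampled independently, the mean square error splits as
\begin{align*}
\|Z - \mathbb{E}[f(x(1))]\|^2 = \big(\mathbb{E}[P_L] - \mathbb{E}[Q]\big)^2 + \sum_{l=0}^{L} \frac{1}{N_l}\,\mathrm{Var}(P_l - P_{l-1}),
\end{align*}
with the convention $P_{-1} := 0$; it then suffices to make each summand at most $\varepsilon^2/2$. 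Throughout I use $T=1$ (so $\ln T = 0$) and $h_l = M^{-l}$, and I specialize Theorem \ref{thm.mainFEM} to $\alpha = 2-2H$ with tolerance $\epsilon = h_l^\alpha$. Because $2(H+\alpha-1) = \alpha = 2-2H$ at this boundary, the fast Euler strong error at level $l$ is $\|y_{M^l}^{h_l} - x(1)\| \leq C\,|\ln h_l|\,h_l^{2-2H} = C\,l\,M^{-2(1-H)l}$.

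First I would establish the three ingredients. Lipschitz continuity of $f$ bounds the bias by the strong error, $|\mathbb{E}[P_L] - \mathbb{E}[Q]| \leq C\,L\,M^{-2(1-H)L}$, so the effective weak order is $2(1-H)$ up to a factor $L$. For the variance, the essential point is that $P_l$ and $P_{l-1}$ are coupled through the \emph{same} realization of the fBm; by the triangle inequality and the strong error at the coarser level $h_{l-1} = M h_l$,
\begin{align*}
V_l := \mathrm{Var}(P_l - P_{l-1}) \leq \|P_l - P_{l-1}\|^2 \leq C\,|\ln h_{l-1}|^2\,h_{l-1}^{2(2-2H)} \leq C\,l^2\,M^{-4(1-H)l}.
\end{align*}
Finally, the recurrence defining \eqref{eq.FEM} gives a per-path cost $C_l = \mathcal{O}\big(M^l(\log M^l)^2\big) = \mathcal{O}(l^2 M^l)$ at level $l$.

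With these in hand I would perform the allocation. Choosing $L$ from the bias constraint gives $L = \mathcal{O}(|\ln\varepsilon|)$; for $H > 3/4$ I would absorb the stray factor $L$ in the bias by sacrificing an arbitrarily small $\rho \in (0,1-H)$, using $L\,M^{-2(1-H)L} \leq C_\rho\,M^{-(2-2H-\rho)L}$, so that the effective weak order becomes $2-2H-\rho$ and $M^L \sim \varepsilon^{-1/(2-2H-\rho)}$. The Lagrange-optimal choice $N_l \propto \sqrt{V_l/C_l}$ makes the total cost proportional to $\varepsilon^{-2}\big(\sum_{l=0}^L \sqrt{V_l C_l}\big)^2$, and $\sqrt{V_l C_l} \leq C\,l^2\,M^{(4H-3)l/2}$. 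The sign of $4H-3$ now produces the trichotomy: for $H < 3/4$ the geometric series converges, giving cost $\mathcal{O}(\varepsilon^{-2})$; for $H = 3/4$ the sum is $\mathcal{O}(L^3)$, giving $\mathcal{O}(\varepsilon^{-2} L^6) = \mathcal{O}(\varepsilon^{-2}|\ln\varepsilon|^6)$; and for $H > 3/4$ the last term dominates, giving $\mathcal{O}(\varepsilon^{-2} L^4 M^{(4H-3)L})$, which after substituting $M^L \sim \varepsilon^{-1/(2-2H-\rho)}$ yields the stated bound $\mathcal{O}\big(\varepsilon^{-(2+(4H-3)/(2-2H-\rho))}|\ln\varepsilon|^4\big)$.

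The main obstacle I anticipate is not a single hard inequality but the consistent bookkeeping of logarithmic factors: the $|\ln h|$ arising from the boundary case $\alpha = 2-2H$ of Theorem \ref{thm.mainEM}, together with the $(\log N)^2$ in the fast Euler cost, must be propagated through the bias, the variance, and the optimal $N_l$ to land exactly on the exponents $6$ and $4$ of $|\ln\varepsilon|$ in the critical and supercritical cases. The genuinely delicate modeling choice is the trade of an arbitrarily small $\rho$ against the polynomial-in-$L$ factor in the bias, which is what keeps a clean power of $\varepsilon$ in the cost when $H > 3/4$; the remainder is the standard MLMC allocation argument.
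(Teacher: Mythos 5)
Your proposal is correct and follows essentially the same route as the paper's proof: the same bias--variance decomposition of the mean square error, the same use of Theorem \ref{thm.mainFEM} (with tolerance $\epsilon = h_l^{\alpha}$) to bound both the bias and the coupled level-difference variance via the triangle inequality through $Q$, the same per-path cost $\mathcal{O}\big(|\ln h_l|^2 h_l^{-1}\big)$, and the same geometric-sum trichotomy governed by the sign of $4H-3$, with the $\rho$-slack absorbing the logarithmic factor so that a clean power of $\varepsilon$ survives when $H>3/4$. The only differences are cosmetic: you take the Lagrange-optimal $N_l \propto \sqrt{V_l/C_l}$ and the resulting cost formula $\varepsilon^{-2}\big(\sum_l \sqrt{V_l C_l}\big)^2$, whereas the paper writes the explicit near-optimal choice \eqref{eq.MLMCNl} and evaluates the sums via Lemma \ref{lm.MLMCforhL}, and you introduce $\rho$ only in the case $H>3/4$ while the paper builds it into the choice \eqref{eq.MLMClevel} of $L$ for every $H$.
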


\begin{proof}
The variance formula $\mbox{Var}(X) = \| X \|^2 - |\mathbb E[X]|^2$, for $X \in L^2(\Omega;\mathbb{R})$, yields
\begin{equation} \label{eq.RMS}
\begin{split}
&\quad\ \| Z - \mathbb{E} [ f(x(1)) ] \|^2 = \big| \mathbb{E} \big[ Z - \mathbb{E} [ f(x(1)) ] \big] \big|^2 + \mbox{Var}(Z) \\
&= \big| \mathbb{E} \big[ P_L - Q \big] \big|^2 + \frac{1}{N_0} \mbox{Var}(P_0) + \sum_{l=1}^{L} \frac{1}{N_l} \mbox{Var}(P_l - P_{l-1}) =: \uppercase\expandafter{\romannumeral1} + \uppercase\expandafter{\romannumeral2} + \uppercase\expandafter{\romannumeral3},
\end{split}
\end{equation}
where $Q$ and $P_l$ are defined by \eqref{eq.QPl}. Firstly, Theorem \ref{thm.mainFEM} and the Lipschitz continuity of $f$ show that
\begin{align} \label{eq.MLMCPart1}
&\quad\ \uppercase\expandafter{\romannumeral1} := \big| \mathbb{E} \big[ P_L - Q \big] \big|^2 \leq \| P_L - Q \|^2 = \| f(y_{M^L}^{h_L}) - f(x(1)) \|^2 \nonumber \\
&\leq C \| y_{M^L}^{h_L} - x(1) \|^2 \leq C h_{L}^{4-4H} |\ln h_{L}|^2 \leq C_1 h_{L}^{4-4H-2\rho} \leq \frac{\varepsilon^2}{3}
\end{align}
provided $\rho \in (0,1-H)$ and
\begin{align} \label{eq.MLMClevel}
L = \left\lceil \frac{1}{4-4H-2\rho} \log_M \big( 3 C_1 \varepsilon^{-2} \big) \right\rceil,
\end{align}
where the notation $\lceil \cdot \rceil$ denotes the ceiling function. Secondly,
\begin{align} \label{eq.MLMCPart2}
\uppercase\expandafter{\romannumeral2} := \frac{1}{N_0} \mbox{Var}(P_0) \leq \frac{\| P_0 \|^2}{N_0} \leq \frac{C_2}{N_0} \leq \frac{\varepsilon^2}{3}
\end{align}
provided
\begin{align} \label{eq.MLMCN0}
N_0 = \left\lceil 3 C_2 \varepsilon^{-2} \right\rceil.
\end{align}
Thirdly, for $l = 1, 2, \cdots, L$, similar to the estimation of \eqref{eq.MLMCPart1}, one can obtain
\begin{align*}
\mbox{Var}(P_l - Q) \leq \| P_l - Q \|^2 \leq C h_l^{4-4H} |\ln h_l|^2
\end{align*}
and
\begin{align*}
\mbox{Var}(Q - P_{l-1}) \leq C h_{l}^{4-4H} |\ln h_{l}|^2,
\end{align*}
so
\begin{align*}
&\quad\ \mbox{Var}(P_l - P_{l-1}) = \mbox{Var}(P_l - Q + Q - P_{l-1}) \\
&\leq \Big( \sqrt{\mbox{Var}(P_l - Q)} + \sqrt{\mbox{Var}(Q - P_{l-1})} \Big)^2 \leq C_3 h_{l}^{4-4H} |\ln h_{l}|^2.
\end{align*}
Thus,
\begin{align} \label{eq.MLMCPart3}
\uppercase\expandafter{\romannumeral3} := \sum_{l=1}^{L} \frac{1}{N_l} \mbox{Var}(P_l - P_{l-1}) \leq \sum_{l=1}^{L} \frac{C_3}{N_l} h_{l}^{4-4H} |\ln h_{l}|^2 \leq \frac{\varepsilon^2}{3}
\end{align}
provided
\begin{align} \label{eq.MLMCNl}
N_l = \left\lceil 3 C_3 \varepsilon^{-2} h_{l}^{\frac{5-4H}{2}} |\ln h_{l}|^2 \sum_{l'=1}^{L} h_{l'}^{\frac{3-4H}{2}} \right\rceil, \quad \mbox{for } l = 1, 2, \cdots, L.
\end{align}
Combining \eqref{eq.RMS}, \eqref{eq.MLMCPart1}, \eqref{eq.MLMCPart2} and \eqref{eq.MLMCPart3}, one can take $L$ and $N_l \ (l = 0, 1, \cdots, L)$ satisfying \eqref{eq.MLMClevel}, \eqref{eq.MLMCN0} and \eqref{eq.MLMCNl} such that $\| Z - \mathbb{E} [ f(x(1)) ] \| \leq \varepsilon$ holds.

It remains to analyze the computational cost $C_{\mbox{\tiny MLMC}}$ of the MLMC estimator \eqref{eq.MLMCFEM}. Actually, the fact that the fast Euler method \eqref{eq.FEM} sampling a path needs a computational cost of $\mathcal{O}\big( |\ln h_l|^2 h_l^{-1} \big)$ (see \cite[Theorem 4.4]{FangLi2020}) indicates
\begin{align*}
C_{\mbox{\tiny MLMC}} &\leq C N_0 + \sum_{l=1}^{L} N_l \mathcal{O}\big( |\ln h_l|^2 h_l^{-1} \big) \\
&\leq C \Big( 3 C_2 \varepsilon^{-2} + 1 \Big) + \sum_{l=1}^{L} \Big( 3 C_3 \varepsilon^{-2} h_{l}^{\frac{5-4H}{2}} |\ln h_{l}|^2 \sum_{l'=1}^{L} h_{l'}^{\frac{3-4H}{2}} + 1\Big) C |\ln h_l|^2 h_l^{-1} \\
&\leq C \varepsilon^{-2} + C \varepsilon^{-2} \sum_{l=1}^{L} |\ln h_{l}|^4 h_{l}^{\frac{3-4H}{2}} \sum_{l'=1}^{L} h_{l'}^{\frac{3-4H}{2}} \\
&\leq
\begin{cases}
C \varepsilon^{-2}, & \mbox{if } H \in (\frac{1}{2}, \frac{3}{4}), \\
C \varepsilon^{-2} |\ln \varepsilon|^6, & \mbox{if } H = \frac{3}{4}, \\
C \varepsilon^{-(2+\frac{4H-3}{2-2H-\rho})} |\ln \varepsilon|^4, & \mbox{if } H \in (\frac{3}{4}, 1),
\end{cases}
\end{align*}
where the last step used the subsequent Lemma \ref{lm.MLMCforhL} and the following relations
\begin{align*}
& L = \log_M \varepsilon^{-\frac{1}{2-2H-\rho}} + C \leq C | \ln \varepsilon |, \\
& h_L = M^{-L} = C \varepsilon^{\frac{1}{2-2H-\rho}} \quad \mbox{and} \quad |\ln h_L| \leq C | \ln \varepsilon |.
\end{align*}
The proof is completed.
\end{proof}

\begin{lemma} \label{lm.MLMCforhL}
Let $M \geq 2$, $h_l = M^{-l} \ (l =1,2,\cdots,L)$, $\beta \geq 0$ and $\gamma \in \mathbb{R}$. Then, there exists some positive constant $C$ $($only depends on $M, \beta, \gamma$, but not on $L)$ such that
\begin{align*}
\sum_{l=1}^{L} |\ln h_l|^{\beta} h_l^{\gamma} \leq
\begin{cases}
C, & \quad\mbox{if } \gamma > 0, \\
|\ln h_L|^{\beta} L, & \quad\mbox{if } \gamma = 0, \\
C |\ln h_L|^{\beta} h_L^{\gamma}, & \quad\mbox{if } \gamma < 0.
\end{cases}
\end{align*}
\end{lemma}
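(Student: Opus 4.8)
The plan is to reduce the entire statement to elementary geometric-series estimates by inserting the explicit form of the stepsizes. Since $h_l = M^{-l}$, I would first record that $|\ln h_l| = l \ln M$ and $h_l^{\gamma} = M^{-l\gamma}$, so that
\[
\sum_{l=1}^{L} |\ln h_l|^{\beta} h_l^{\gamma} = (\ln M)^{\beta} \sum_{l=1}^{L} l^{\beta} r^{l}, \qquad r := M^{-\gamma}.
\]
Everything then comes down to bounding $\sum_{l=1}^{L} l^{\beta} r^{l}$, and I would split according to the sign of $\gamma$, equivalently according to whether $r<1$, $r=1$, or $r>1$.

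In the case $\gamma > 0$ one has $r \in (0,1)$, so I would simply dominate the finite sum by the convergent series $\sum_{l=1}^{\infty} l^{\beta} r^{l}$ (finiteness follows from the ratio test since $\beta \geq 0$ and $r<1$); its value depends only on $M,\beta,\gamma$, giving the bound $C$. In the case $\gamma = 0$ one has $r=1$, and bounding $l^{\beta} \leq L^{\beta}$ termwise gives $\sum_{l=1}^{L} l^{\beta} \leq L^{\beta+1}$, whence the total is at most $(\ln M)^{\beta} L^{\beta+1} = (L\ln M)^{\beta} L = |\ln h_L|^{\beta} L$; this already yields the stated inequality with $C=1$.

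The only case requiring a little care is $\gamma < 0$, where $r = M^{|\gamma|} > 1$ and the last summand dominates. Here I would factor out the $l=L$ term and substitute $k = L-l$:
\[
\sum_{l=1}^{L} l^{\beta} M^{l|\gamma|} = L^{\beta} M^{L|\gamma|} \sum_{k=0}^{L-1} \Big(\tfrac{L-k}{L}\Big)^{\beta} M^{-k|\gamma|} \leq L^{\beta} M^{L|\gamma|} \sum_{k=0}^{\infty} M^{-k|\gamma|} = \frac{L^{\beta} M^{L|\gamma|}}{1 - M^{-|\gamma|}},
\]
using $(L-k)/L \leq 1$ together with $\beta \geq 0$ in the middle step. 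Recalling $M^{L|\gamma|} = h_L^{\gamma}$ and $L\ln M = |\ln h_L|$, multiplication by $(\ln M)^{\beta}$ delivers $C|\ln h_L|^{\beta} h_L^{\gamma}$ with $C = (1-M^{-|\gamma|})^{-1}$.

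I do not anticipate a genuine obstacle: the lemma is an elementary estimate once the substitution $|\ln h_l| = l\ln M$ is made. The one point to watch is precisely the $\gamma < 0$ case, where I must extract the dominant final summand before invoking the geometric bound and then verify that the resulting constant coming from the geometric tail is independent of $L$, which it is since $\sum_{k\ge 0} M^{-k|\gamma|}$ converges.
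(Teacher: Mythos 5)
Your proposal is correct and follows essentially the same route as the paper: after writing $|\ln h_l|^{\beta} h_l^{\gamma} = (l\ln M)^{\beta} M^{-l\gamma}$, both arguments dominate the $\gamma>0$ case by the convergent series $\sum_{l\ge 1} l^{\beta} M^{-l\gamma}$, handle $\gamma=0$ by the trivial bound $l^{\beta}\le L^{\beta}$, and for $\gamma<0$ bound the log factor by its largest value (your step $((L-k)/L)^{\beta}\le 1$ is exactly the paper's use of the monotonicity of $|\ln h_l|$) before summing a geometric series in $M^{\gamma}$. The resulting constants depend only on $M,\beta,\gamma$, as required.
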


\begin{proof}
When $\gamma > 0$, it follows from $M \geq 2 > 1$ that
\begin{align*}
\sum_{l=1}^{L} |\ln h_l|^{\beta} h_l^{\gamma} &= \sum_{l=1}^{L} \big( l \ln M \big)^{\beta} M^{-\gamma l} \leq C \sum_{l=1}^{+\infty} l^{\beta} M^{-\gamma l} \leq C.
\end{align*}
Note that the result is trivial for the case $\gamma = 0$. While $\gamma < 0$, the fact that $|\ln h_l|$ is increasing with respect to $l$ shows
\begin{align*}
\sum_{l=1}^{L} |\ln h_l|^{\beta} h_l^{\gamma} \leq |\ln h_L|^{\beta} \sum_{l=1}^{L} M^{(L - l) \gamma} M^{-L\gamma} \leq |\ln h_L|^{\beta} h_L^{\gamma} \sum_{l=1}^{+\infty} M^{\gamma l} \leq C |\ln h_L|^{\beta} h_L^{\gamma},
\end{align*}
which completes the proof.
\end{proof}

\section{Preliminaries for the proof of Theorem \ref{thm.mainEM}} \label{sec.Preli}

To facilitate the proof of Theorem \ref{thm.mainEM}, this section is devoted to establishing the Malliavin differentiability of the exact solution and the sharp estimates of some singular integrals. In the following, the Beta function will be frequently used, which is defined by
\begin{align} \label{eq.Betafun}
B(a,b) := \int_0^1 u^{a-1} (1-u)^{b-1} \mathrm d u, \qquad \mbox{for } a,b > 0.
\end{align}

\subsection{Malliavin calculus with respect to the fBm} \label{sec.MalliavinfBm}

Let us start with some basic definitions and Malliavin calculus with respect to the fBm; see \cite{Nualart2006,HongHuang2020} for more details. Throughout this paper, we always assume $H \in (1/2,1)$, and in this case, the covariance of the fBm $\{W_H(t)\}_{t\in[0,T]}$ possesses the following form
\begin{align*}
{\rm Cov}(t,s) = H(2H-1) \int_{0}^{t}\int_{0}^{s} |u-v|^{2H-2} \mathrm{d}u \mathrm{d}v,\quad \forall\, s,t \in [0,T].
\end{align*}
Denote by $\mathcal{E}$ the set of real-valued step functions on $[0,T]$ and let $\mathcal H$ be the Hilbert space defined as the closure of $\mathcal E$ with respect to the inner product $\langle \mathbf{1}_{[0,t]} (\cdot), \mathbf{1}_{[0,s]} (\cdot) \rangle_{\mathcal{H}} := {\rm Cov}(t,s)$. The map $\mathbf 1_{[0,t]} (\cdot) \mapsto W_H(t)$ can be extended to an isometry between $\mathcal{H}$ and a closed subspace of $L^2(\Omega,\mathcal F, \mathbb P)$. More precisely, denote this isometry by $\varphi \mapsto W_H(\varphi)$, and then for any $\varphi,\phi \in \mathcal{H}$, 
\begin{align*}
\langle \varphi, \phi \rangle_{\mathcal{H}} = H(2H-1) \int_{0}^{T}\int_{0}^{T} \varphi(u) \phi(v) |u-v|^{2H-2} \mathrm{d}u \mathrm{d}v = \mathbb{E}\Big[ W_H(\varphi) W_H(\phi) \Big].
\end{align*}

Denote by $\mathcal{S}$ the class of smooth real-valued random variables such that $F \in \mathcal{S}$ has the form
\begin{align*}
F = f(W_H(\varphi_1),\ldots,W_H(\varphi_n)),
\end{align*}
where $f \in C_p^\infty(\mathbb{R}^n;\mathbb{R})$, $\varphi_i \in \mathcal{H}, \, i=1,\ldots,n,\, n \in \mathbb{N_+}$. Here, $C_p^\infty(\mathbb{R}^n;\mathbb{R})$ is the space of all real-valued smooth functions on $\mathbb{R}^n$ with polynomial growth. The Malliavin derivative of $F \in \mathcal{S}$ is an $\mathcal{H}$-valued random variable defined by
\begin{align*}
D F = \sum_{i=1}^n \frac{\partial}{\partial x_i} f(W_H(\varphi_1),\ldots,W_H(\varphi_n)) \varphi_i,
\end{align*}
which is also a stochastic process $DF = \{ D_r F \}_{r \in [0,T]}$ with 
$$D_r F = \sum_{i=1}^n \frac{\partial}{\partial x_i} f(W_H(\varphi_1),\ldots,W_H(\varphi_n)) \varphi_i(r).$$ 

For any $q \geq1$, we denote the domain of $D$ in $L^q(\Omega;\mathbb{R})$ by $\mathbb{D}^{1,q}$, meaning that $\mathbb{D}^{1,q}$ is the closure of $\mathcal{S}$ with respect to the norm
\begin{align*}
\|F\|_{\mathbb{D}^{1,q}} = \Big( \mathbb{E} \big[ |F|^q \big] + \mathbb{E} \big[ \|DF\|_{\mathcal{H}}^q \big] \Big)^{\frac{1}{q}}.
\end{align*}
In a similar manner, for $F \in \mathcal{S}$, the iterated derivative $D^k F$ ($k \in \mathbb{N}_+$) is defined as a random variable with values in $\mathcal{H}^{\otimes k}$. For every $q \geq 1$ and $k \in \mathbb{N}_+$, denote by $\mathbb{D}^{k,q}$ the completion of $\mathcal{S}$ with respect to the norm
\begin{align*}
\|F\|_{\mathbb{D}^{k,q}} = \left( \mathbb{E}\Big[ |F|^q + \sum_{j=1}^{k} \|D^jF\|_{\mathcal{H}^{\otimes j}}^q \Big] \right)^{\frac{1}{q}}.
\end{align*}
We also denote $\mathbb{D}^{k,\infty} := \bigcap_{q \in [1,\infty)} \mathbb{D}^{k,q}$ for simplicity. For $F \in \mathbb{D}^{2,\infty}$ and $\phi \in C_{p}^{2}$, it follows from \cite[Remark 3.4]{Sanz-Sole2005} that $\phi(F) \in \mathbb{D}^{2,\infty}$, $D(\phi(F)) = \phi'(F) D F$ and $D^2(\phi(F)) = \phi''(F) D F \otimes D F + \phi'(F) D^2 F$.

Next, we introduce the adjoint operator $\delta$ of the derivative operator $D$, which is also known as the Skorohod integral. If an $\mathcal{H}$-valued random variable $\varphi\in L^2(\Omega;\mathcal{H})$ satisfies
\begin{align*}
\big| \mathbb{E}[\langle \varphi, DF \rangle_{\mathcal{H}}] \big| \leq C(\varphi) \| F \|, \quad \forall\, F\in \mathbb{D}^{1,2},
\end{align*}
then $\varphi\in {\rm Dom}(\delta)$ and $\delta(\varphi)\in L^2(\Omega;\mathbb{R})$ is characterized by the dual formula
\begin{align*}
\mathbb{E}[ \langle \varphi, DF \rangle_{\mathcal{H}}] = \mathbb{E}[F\delta(\varphi)], \quad \forall\, F\in \mathbb{D}^{1,2}.
\end{align*}
In particular, when $\varphi \in \mathcal{H}$ is deterministic, the Skorohod integral $\delta(\varphi)$ coincides with the Riemann--Stieltjes integral $\int_0^T \varphi(u) \mathrm{d} W_H(u)$.

\subsection{Malliavin differentiability of the exact solution}

The following extension of Gr\"{o}nwall's lemma is established in \cite[Lemma 15]{Dalang1999}.

\begin{lemma} \label{lem.Gronwall}
Let $\{ f_n \}_{ n\in\mathbb N}$ be a sequence of nonnegative real-valued functions on $[0,T]$ and $k_1$, $k_2$ be nonnegative real numbers such that for all $n \geq 1$, $t \in [0,T]$ and some $g \in L^1 ([0,T]; \mathbb{R}_+)$,
\begin{align*}
f_n(t) \leq k_1+\int_0^t \big( k_2 + f_{n-1}(s) \big) g(t-s) \mathrm{d} s.
\end{align*}
Then, there is a sequence $\{ a_n \}_{ n\in\mathbb N_+}$ of nonnegative real numbers satisfying $\sum_{n=1}^{\infty} a_n < \infty$ with the property:\
If $\sup_{t \in [0,T]} f_0(t) = M$, then for all $n \geq 1$, $t \in [0,T]$,
\begin{equation*}
f_n(t)\leq k_1 + (k_1+k_2) \sum_{i=1}^{n-1} a_i + (k_2 + M) a_n,
\end{equation*}
which implies $sup_{n\geq 0}\sup_{t \in [0,T]} f_n(t) < \infty$. Moreover, if further $k_1=k_2=0$, then $\sum_{n\geq 0}f_n(t)$ converges uniformly on $[0,T]$.
\end{lemma}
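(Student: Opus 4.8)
\emph{Plan.} The statement really bundles three things: the existence of a single sequence $\{a_n\}$ depending only on $g$ and $T$ (not on $k_1,k_2$ or the particular $\{f_n\}$), the explicit bound on $f_n$, and the summability $\sum_n a_n<\infty$ with its two corollaries. I would prove it by iterating the given recursion and reading off the convolution powers of $g$ as the natural candidates for $a_n$. Write $(\phi*\psi)(t):=\int_0^t\phi(s)\psi(t-s)\,\mathrm ds$ for the causal convolution on $[0,T]$, set $g^{*1}:=g$, $g^{*(m+1)}:=g*g^{*m}$, and define
\[
a_n:=\sup_{t\in[0,T]}\int_0^t g^{*n}(s)\,\mathrm ds=\big\|g^{*n}\big\|_{L^1([0,T])},\qquad a_0:=1,
\]
the two expressions for $a_n$ agreeing because $g\ge0$ makes $t\mapsto\int_0^t g^{*n}$ nondecreasing.

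First I would substitute the hypothesis into itself $n$ times. Using associativity and commutativity of $*$ together with Fubini, each substitution peels off one factor of $g$, and a routine induction gives
\[
f_n(t)\le k_1\sum_{m=0}^{n-1}(\mathbf 1*g^{*m})(t)+k_2\sum_{m=1}^{n}(\mathbf 1*g^{*m})(t)+(f_0*g^{*n})(t),
\]
where $\mathbf 1$ denotes the constant function $1$. Bounding $(f_0*g^{*n})(t)\le M\,(\mathbf 1*g^{*n})(t)\le M a_n$ and $(\mathbf 1*g^{*m})(t)\le a_m$, then splitting off the $m=0$ term (which contributes $k_1a_0=k_1$) and the $m=n$ term, rearranges the right-hand side exactly into $k_1+(k_1+k_2)\sum_{i=1}^{n-1}a_i+(k_2+M)a_n$, as claimed. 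Note that $\{a_n\}$ is manufactured from $g$ and $T$ alone, which is what lets one sequence serve all choices of $k_1,k_2,M$.

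The heart of the matter is the summability $\sum_{n\ge1}a_n<\infty$, and this is where I expect the real work to lie. The naive estimate $a_n\le\|g\|_{L^1}^n$ from Young's inequality is useless once $\|g\|_{L^1}\ge1$, which is the generic situation and certainly occurs for the singular kernels of interest; one must instead exploit the Volterra structure, i.e.\ the quasi-nilpotence of convolution on a finite interval. I would split $g=(g\wedge R)+h$ with $h:=(g-R)_+\ge0$: since $g\in L^1$, choosing $R$ large makes $\theta:=\|h\|_{L^1([0,T])}<1$, while $g\wedge R\le R\mathbf 1$ pointwise. Monotonicity of convolution for nonnegative functions gives $g^{*n}\le(R\mathbf 1+h)^{*n}$, and the convolution binomial theorem (valid by commutativity) expands this as $\sum_{j=0}^n\binom nj(R\mathbf 1)^{*(n-j)}*h^{*j}$. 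Here the exact identity $(R\mathbf 1)^{*k}(t)=R^kt^{k-1}/(k-1)!$ supplies a factorial gain, $\|(R\mathbf 1)^{*k}\|_{L^1}=(RT)^k/k!$, while Young controls $\|h^{*j}\|_{L^1}\le\theta^j$. Writing $k:=n-j$ for the number of constant factors, one gets $a_n\le\sum_{k=0}^n\binom nk(RT)^k\theta^{n-k}/k!$, and using $\binom nk/k!\le n^k/(k!)^2$ this is at most $\theta^n\sum_{k\ge0}(nRT/\theta)^k/(k!)^2$, a bound of the shape $\theta^n\,e^{2\sqrt{nRT/\theta}}$. Since $\theta<1$, the exponential decay $\theta^n$ swamps the sub-exponential growth $e^{c\sqrt n}$, so the terms are eventually geometrically small and the series converges.

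Finally I would harvest the two corollaries. Summability makes the right-hand side of the explicit bound a convergent series plus $k_1$, uniformly in $t$ and in $n$, whence $\sup_{n\ge0}\sup_{t\in[0,T]}f_n(t)<\infty$. When $k_1=k_2=0$ the bound collapses to $f_n(t)\le Ma_n$ for $n\ge1$, so $\sum_{n\ge0}f_n(t)$ is dominated termwise by the $t$-independent convergent series $M\sum_n a_n$, and the Weierstrass $M$-test delivers uniform convergence on $[0,T]$. The only genuinely delicate step is the summability via quasi-nilpotence; everything else is bookkeeping built on the convolution iteration.
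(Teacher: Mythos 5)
Your proposal is correct, but there is nothing internal to compare it against: the paper does not prove this lemma at all, it simply imports it as \cite[Lemma 15]{Dalang1999}. Your argument is therefore a self-contained substitute for the citation, and it is sound in every step: the iteration of the recursion producing $f_n\le k_1\sum_{m=0}^{n-1}(\mathbf 1*g^{*m})+k_2\sum_{m=1}^{n}(\mathbf 1*g^{*m})+f_0*g^{*n}$ is a clean induction (convolution against the nonnegative kernel $g$ preserves pointwise inequalities), the rearrangement with $a_n:=\|g^{*n}\|_{L^1([0,T])}$, $a_0=1$ reproduces exactly the stated bound $k_1+(k_1+k_2)\sum_{i=1}^{n-1}a_i+(k_2+M)a_n$, and your sequence depends only on $g$ and $T$, as it should. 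The crux, $\sum_n\|g^{*n}\|_{L^1}<\infty$, you handle by truncation $g\le R\mathbf 1+h$ with $\theta=\|h\|_{L^1}<1$, the convolution binomial theorem, the exact identity $\|(R\mathbf 1)^{*k}\|_{L^1}=(RT)^k/k!$, and the estimate $\binom nk/k!\le n^k/(k!)^2$, giving $a_n\le\theta^n e^{2\sqrt{nRT/\theta}}$, which is indeed summable since the linear decay in the exponent beats $\sqrt n$; the only cosmetic point is that dividing by $\theta$ presumes $\theta>0$, and the case $\theta=0$ (i.e.\ $g$ bounded) is trivial since then $a_n\le (RT)^n/n!$. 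For comparison, the classical route to this summability (essentially what underlies Dalang's lemma and the Volterra literature) is shorter: exponential weighting gives $\int_0^T g^{*n}(t)\,\mathrm dt\le e^{\lambda T}\bigl(\int_0^T e^{-\lambda t}g(t)\,\mathrm dt\bigr)^n$, and choosing $\lambda$ large makes the weighted $L^1$ norm less than $1$ by dominated convergence, so the series is geometric. Your truncation argument buys elementarity (no transform, only nonnegativity and Fubini) at the cost of length; the Laplace-weight argument buys brevity. Either way, your proof is complete and could stand in for the citation.
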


Now, we consider the first and second order Malliavin derivatives of the exact solution in Theorem \ref{thm.Malliavin}, which turns out to be bounded by some quantities related to the singular kernel in the model \eqref{eq.GLE}, rather than by a constant, compared to the case of SDEs with fBm (see e.g., \cite{KloedenNeuenkirch2011}). 

\begin{theorem} \label{thm.Malliavin}
Let $H \in (1/2, 1)$ and $\alpha \in (1-H,1)$. If $b \in C_{b,p}^{1,2}$, then $x(t) \in \mathbb{D}^{2,\infty}$ for any $t \in [0,T]$, and there exists some constant $C = C(\alpha,H,\sigma,T)$ such that for $r\in[0,T]$,
\begin{align}\label{Drxt}
| D_r x(t) | \leq C (t - r)^{\alpha-1} \mathbf{1}_{[0,t)}(r),\quad \mbox{a.s.}
\end{align}
Moreover, for any $q\geq1$, there exists some constant $C = C(\alpha,H,\sigma,T,q)$ such that for $r_1, r_2 \in [0,T]$, 
\begin{align}\label{Drxt-2}
\| D_{r_2} D_{r_1} x(t) \|_q \leq C \int_{r_1 \vee r_2}^t (t-s)^{\alpha-1} (s-r_1)^{\alpha-1} (s-r_2)^{\alpha-1} \mathrm{d} s \mathbf{1}_{[0,t)}(r_1 \vee r_2).
\end{align}
\end{theorem}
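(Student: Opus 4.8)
The plan is to construct $x(t)$ as the limit of the Picard iteration $x^{(0)}(t) \equiv x_0$ and
\[
x^{(n+1)}(t) = x_0 + \frac{1}{\Gamma(\alpha)}\int_0^t (t-s)^{\alpha-1} b(x^{(n)}(s))\,\mathrm ds + G(t),
\]
and to propagate Malliavin differentiability and the two bounds along the iteration. Since $G(t) = W_H(\varphi_t)$ with $\varphi_t(\cdot) = \frac{\sigma}{\Gamma(\alpha)}(t-\cdot)^{\alpha-1}\mathbf 1_{[0,t)}(\cdot) \in \mathcal H$ deterministic, one has $D_r G(t) = \frac{\sigma}{\Gamma(\alpha)}(t-r)^{\alpha-1}\mathbf 1_{[0,t)}(r)$ and $D^2 G(t) = 0$. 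Using the chain and product rules recalled from \cite{Sanz-Sole2005} (applicable since $b \in C_p^2$), I would show by induction that $x^{(n)}(t) \in \mathbb D^{2,\infty}$ for every $t$, with
\[
D_r x^{(n+1)}(t) = \frac{1}{\Gamma(\alpha)}\int_0^t (t-s)^{\alpha-1} b'(x^{(n)}(s))\, D_r x^{(n)}(s)\,\mathrm ds + D_r G(t),
\]
and the analogous second-order recursion carrying the extra Hessian term $b''(x^{(n)}(s))\, D_{r_1}x^{(n)}(s)\, D_{r_2}x^{(n)}(s)$. Standard moment estimates for the Lipschitz SVIE give $\sup_{n}\sup_{t}\|x^{(n)}(t)\|_m < \infty$ for all $m$, which controls the polynomially growing $b''$ via H\"older's inequality.

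For the first-order bound I would argue at the level of the a.s.\ pointwise recursion. Because $b \in C_b^1$, we have $|b'| \leq L$ deterministically, so with $g_n(t,r) := |D_r x^{(n)}(t)|$,
\[
g_{n+1}(t,r) \leq \frac{L}{\Gamma(\alpha)}\int_r^t (t-s)^{\alpha-1} g_n(s,r)\,\mathrm ds + \frac{\sigma}{\Gamma(\alpha)}(t-r)^{\alpha-1}\mathbf 1_{[0,t)}(r)
\]
a.s. This is a deterministic weakly singular Volterra inequality whose resolvent kernel is $R(\tau) = \sum_{k\geq 1}\frac{L^k}{\Gamma(k\alpha)}\tau^{k\alpha-1}$, a Mittag-Leffler type function bounded on $[0,T]$ by $C\tau^{\alpha-1}$ (the $k=1$ term carries the singularity, the remaining terms being uniformly bounded). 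Iterating the Beta-function identity $\int_a^b (b-s)^{\mu-1}(s-a)^{\nu-1}\mathrm ds = (b-a)^{\mu+\nu-1}B(\mu,\nu)$ shows that the resolvent acting on the forcing $\frac{\sigma}{\Gamma(\alpha)}(t-r)^{\alpha-1}$ stays $\leq C(t-r)^{\alpha-1}$ uniformly on $[0,T]$. A comparison/induction argument then yields the uniform-in-$n$ bound $g_n(t,r) \leq C(t-r)^{\alpha-1}\mathbf 1_{[0,t)}(r)$, which by closedness of $D$ passes to $x(t)$ and proves \eqref{Drxt}.

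The second-order bound follows the same template in $L^q(\Omega)$. Taking $\|\cdot\|_q$ of the second-order recursion, bounding $b''(x^{(n)}(s))$ in a suitable $L^{q'}$ by the moment estimates and inserting the first-order bound \eqref{Drxt} for $D_{r_1}x^{(n)}$ and $D_{r_2}x^{(n)}$, one obtains for $u_n(t) := \|D_{r_2}D_{r_1}x^{(n)}(t)\|_q$ the inequality
\[
u_{n+1}(t) \leq \frac{C}{\Gamma(\alpha)}\int_{r_1\vee r_2}^t (t-s)^{\alpha-1}(s-r_1)^{\alpha-1}(s-r_2)^{\alpha-1}\,\mathrm ds + \frac{L}{\Gamma(\alpha)}\int_{r_1\vee r_2}^t (t-s)^{\alpha-1} u_n(s)\,\mathrm ds.
\]
The same resolvent reduces matters to showing that the forcing term, call it $F(t)$, dominates its own weakly singular convolution, that is $\int_{r_1\vee r_2}^t (t-s)^{\alpha-1}F(s)\,\mathrm ds \leq C\,F(t)$ on $[0,T]$. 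Establishing this, together with the uniform bounds, gives $u_n(t) \leq C\,F(t)$, and closedness of $D^2$ delivers $x(t) \in \mathbb D^{2,\infty}$ and \eqref{Drxt-2}.

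I expect the decisive difficulty to lie in the multiple singular integral estimate of the last step: unlike the first-order case, $F(t)$ is itself a singular double integral anchored at the two points $r_1, r_2$, and controlling the quadruple integral $\int (t-s)^{\alpha-1}\!\int (s-w)^{\alpha-1}(w-r_1)^{\alpha-1}(w-r_2)^{\alpha-1}\,\mathrm dw\,\mathrm ds$ by $C\,F(t)$ requires the delicate Beta-function bookkeeping and a case analysis on the relative positions of $r_1, r_2, s, w$ announced in the abstract, rather than a single application of Lemma \ref{lem.Gronwall}. A subsidiary technical point is the limit transition: one must verify $\sup_n\|x^{(n)}(t)\|_{\mathbb D^{2,q}} < \infty$ and invoke the closability of $D$ and $D^2$ so that the uniform bounds are inherited by the weak limits $Dx(t)$ and $D^2 x(t)$.
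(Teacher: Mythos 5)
Your plan follows essentially the same route as the paper: Picard iteration \eqref{eq.Picard}, uniform Malliavin bounds, and a Mittag--Leffler (resolvent) argument to absorb the weakly singular Volterra term. The only structural difference is that you propagate the pointwise bounds \eqref{Drxt}--\eqref{Drxt-2} along the iteration, whereas the paper uses the iteration only to secure $x(t)\in\mathbb D^{2,\infty}$ (via Lemma \ref{lem.Gronwall} applied to the $\mathbb D^{2,q}$-norms) and then runs a singular Gr\"onwall argument directly on the equations satisfied by $D_r x(t)$ and $D_{r_2}D_{r_1}x(t)$. Your first-order argument is complete: writing $\mathcal A$ for the operator $\phi\mapsto\frac{L}{\Gamma(\alpha)}\int_r^t(t-s)^{\alpha-1}\phi(s)\,\mathrm ds$, whose $k$-fold iterate has kernel $\frac{L^k}{\Gamma(k\alpha)}(t-s)^{k\alpha-1}$, the Beta identity gives $\sum_{k\ge0}\mathcal A^k f\le \sigma E_{\alpha,\alpha}(LT^{\alpha})(t-r)^{\alpha-1}$ for $f=\frac{\sigma}{\Gamma(\alpha)}(t-r)^{\alpha-1}$, which matches the paper's bound; the weak-limit technicality you flag is real but standard (the set of processes dominated a.e.\ by a fixed function is convex, and the argument can be run in $L^{1/H}$, which embeds continuously into $\mathcal H$ since $\alpha>1-H$).

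The one genuine gap is that the decisive estimate of the second-order step, namely $\int_{r_1\vee r_2}^t(t-s)^{\alpha-1}F(s)\,\mathrm ds\le CF(t)$ with $F(t)=\int_{r_1\vee r_2}^t(t-s)^{\alpha-1}(s-r_1)^{\alpha-1}(s-r_2)^{\alpha-1}\,\mathrm ds$, is stated but never proved: you defer it as ``the decisive difficulty'' and predict it needs a ``case analysis on the relative positions of $r_1,r_2,s,w$''. It does not; it is a two-line computation, and it is exactly how the paper finishes the proof of \eqref{Drxt-2}. By Tonelli (all integrands are nonnegative), exchange the order of integration:
\begin{align*}
\int_{r_1\vee r_2}^t(t-s)^{\alpha-1}F(s)\,\mathrm ds
&=\int_{r_1\vee r_2}^t\Big(\int_w^t(t-s)^{\alpha-1}(s-w)^{\alpha-1}\,\mathrm ds\Big)(w-r_1)^{\alpha-1}(w-r_2)^{\alpha-1}\,\mathrm dw\\
&=B(\alpha,\alpha)\int_{r_1\vee r_2}^t(t-w)^{2\alpha-1}(w-r_1)^{\alpha-1}(w-r_2)^{\alpha-1}\,\mathrm dw
\le B(\alpha,\alpha)\,T^{\alpha}F(t),
\end{align*}
since $(t-w)^{2\alpha-1}\le T^{\alpha}(t-w)^{\alpha-1}$. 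The paper performs precisely this Fubini--Beta step, with $(t-s)^{n\alpha-1}$ in place of $(t-s)^{\alpha-1}$ inside the Gr\"onwall series, arriving at the $E_{\alpha,\alpha}$ bound. One caution on your reduction: it is legitimate only because you first sum the iterated kernels into the resolvent $R(\tau)\le C\tau^{\alpha-1}$ and then apply the displayed inequality \emph{once}; iterating the inequality itself would produce constants $C^k$ that need not be summable when $C\ge 1$. With the display above inserted, your proof closes.
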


\begin{proof}
Consider the following standard Picard iteration sequence
\begin{align}\label{eq.Picard}
x^{(n+1)}(t) = x_0 + \frac{1}{\Gamma(\alpha)} \int_0^t (t-s)^{\alpha-1} b \big( x^{(n)}(s) \big) \mathrm{d} s + G(t), \quad n \geq 0
\end{align}
with $x^{(0)}(t) = x_0$ for all $t \in [0,T]$. To prove the statement $x(t) \in \mathbb{D}^{2,\infty}$ for all $t \in [0,T]$, it suffices to show that $\{ x^{(n)}(t) \}_{n=0}^{\infty}$ converges to $x(t)$ in $L^q(\Omega;\mathbb{R})$ and $\sup_{n \geq 0} \| x^{(n)}(t) \|_{\mathbb{D}^{2,q}} < \infty$ for any $q \geq 1$, in view of \cite[Lemma 1.5.3]{Nualart2006}.

\textit{Claim 1:\ $\{ x^{(n)}(t) \}_{n=0}^{\infty}$ converges to $x(t)$ in $L^q(\Omega;\mathbb{R})$ for any $q \geq 1$.}

Let $q\ge1$. Since the law of $G(t)$ is Gaussian, it follows from \cite[Lemma 2]{LiLiu2017} that for any $t \in [0,T]$, 
\begin{align} \label{eq.t^alpha-1}
\|G(t)\|_q \leq C(q)\|G(t)\| \leq C \|(t-\cdot)^{\alpha-1}\mathbf{1}_{[0,t)}(\cdot)\|_{\mathcal H} \leq C
\end{align}
with $C = C(\alpha,H,T,\sigma,q)$. Then, by \eqref{eq.Picard} and \eqref{eq.t^alpha-1},
\begin{align*}
\|x^{(1)}(t) - x^{(0)}(t) \|_q = \|b(x_0)\frac{1}{\Gamma(\alpha+1)}t^\alpha+G(t)\|_q \leq C(\alpha,H, T,\sigma,q,x_0).
\end{align*}
For each $n \geq 1$, the assumption $b \in C_b^1$ reads
\begin{align*}
\|x^{(n+1)}(t) - x^{(n)}(t) \|_q \leq C \int_0^t(t-s)^{\alpha-1}\|x^{(n)}(s) - x^{(n-1)}(s) \|_q \mathrm{d} s.
\end{align*}
Applying Lemma \ref{lem.Gronwall} to the sequence $\{f_n\}_{n \geq 0}$ with $f_n(t) := \|x^{(n+1)}(t) - x^{(n)}(t) \|_q$ shows
\begin{align*}
\sum_{n=0}^{\infty} \sup_{t\in[0,T]}\|x^{(n+1)}(t) - x^{(n)}(t) \|_q < \infty,
\end{align*}
which implies that $x^{(n)}(t) = x^{(0)}(t) + \sum_{k=0}^{n-1}\big( x^{(k+1)}(t) - x^{(k)}(t) \big)$ converges uniformly in $L^q(\Omega;\mathbb{R})$ with respect to $t \in [0,T]$, and
\begin{align} \label{eq.XnLq}
\sup_{t\in[0,T]} \|x^{(n)}(t)\|_q < \infty.
\end{align}
Putting $n\rightarrow \infty$ on both sides of \eqref{eq.Picard}, the limit of $\{x^{(n)}(t)\}_{n \geq 0}$ satisfies Eq.\! \eqref{eq.GLE}, and
\begin{align} \label{eq.XtLq}
\sup_{t\in[0,T]} \|x(t)\|_q < \infty.
\end{align}

\textit{Claim 2:\ $\sup_{n \geq 0} \| x^{(n)}(t) \|_{\mathbb{D}^{2,q}} < \infty$ for any $q \geq 1$.}

Obviously, $x^{(0)}(t) \in \mathbb{D}^{2,\infty}$. Assume by induction that $x^{(n)}(t) \in \mathbb D^{2,\infty}$, it follows from $b \in C_{b,p}^{1,2}$ that $x^{(n+1)}(t) \in \mathbb D^{2,\infty}$. Thus, we have $x^{(n)}(t) \in \mathbb D^{2,\infty}$, for any integer $n \geq 1$. Taking the Malliavin derivative on both sides of \eqref{eq.Picard}, the chain rule of the Malliavin derivative gives that for $r<t$,
\begin{align}\label{eq.DrXn}
D_r x^{(n+1)}(t) &= \frac{1}{\Gamma(\alpha)} \int_0^t (t-s)^{\alpha-1} b^{\prime} (x^{(n)}(s)) D_rx^{(n)}(s) \mathbf{1}_{[0,s)}(r) \mathrm{d} s + \frac{\sigma}{\Gamma(\alpha)}(t-r)^{\alpha-1}, 
\end{align}
and for $r\in[t,T]$, $D_rx^{(n+1)}(t)=0$. Then, using \eqref{eq.t^alpha-1}, \eqref{eq.DrXn} and the assumption $b \in C_b^1$, we have
\begin{align*}
\| Dx^{(n+1)}(t) \|_{L^q(\Omega;\mathcal H)} &\leq C\int_0^t (t-s)^{\alpha-1} \|Dx^{(n)}(s) \|_{L^q(\Omega;\mathcal H)} \mathrm{d} s+C\|(t-\cdot)^{\alpha-1}\mathbf{1}_{[0,t)}(\cdot)\|_{\mathcal H} \\
&\leq C\int_0^t (t-s)^{\alpha-1} \| Dx^{(n)}(s) \|_{L^q(\Omega;\mathcal H)} \mathrm{d} s + C(\alpha,H,T,\sigma).
\end{align*}
Applying Lemma \ref{lem.Gronwall} to the sequence $\{f_n\}_{n \geq 0}$ with $f_n(t) := \| Dx^{(n)}(t) \|_{L^q(\Omega;\mathcal H)}$ obtains
\begin{align}\label{eq.DrXnLqBound}
\sup_{n\ge0}\sup_{t\in[0,T]}\| Dx^{(n)}(t) \|_{L^q(\Omega;\mathcal H)} \leq C(\alpha,H,T,\sigma), \quad \forall \, q \geq 1.
\end{align}

Taking the Malliavin derivative on both sides of \eqref{eq.DrXn}, we also have for $r_1, \, r_2 \in [0,T]$,
\begin{align*}
D_{r_2} D_{r_1} x^{(n+1)}(t)
&= \frac{1}{\Gamma(\alpha) }\int_0^t (t-s)^{\alpha-1} b^{\prime\prime}(x^{(n)}(s)) D_{r_2} x^{(n)}(s) D_{r_1} x^{(n)}(s) \mathbf{1}_{[0,s)}(r_1 \vee r_2) \mathrm{d} s\\
&\quad + \frac{1}{\Gamma(\alpha)} \int_0^t (t-s)^{\alpha-1} b^{\prime}(x^{(n)}(s)) D_{r_2} D_{r_1} x^{(n)}(s) \mathbf{1}_{[0,s)}(r_1 \vee r_2) \mathrm{d} s, 
\end{align*}
Then, using the assumption $b \in C_{b,p}^{1,2}$, H\"older's inequality as well as \eqref{eq.XnLq} and \eqref{eq.DrXnLqBound}, one can derive
\begin{align*}
\|D^2 x^{(n+1)}(t) \|_{L^q(\Omega;\mathcal H^{\otimes 2})} 
&\leq C \int_0^t (t-s)^{\alpha-1} \| b^{\prime\prime}(x^{(n)}(s)) \|_{2q} \| D x^{(n)}(s) \|_{L^{4q}(\Omega;\mathcal H)}^2 \mathrm{d} s \\
&\quad + C \int_0^t (t-s)^{\alpha-1} \| D^2 x^{(n)}(s) \|_{L^{q}(\Omega;\mathcal H^{\otimes 2})}\mathrm{d} s\\
&\leq C + C \int_0^t (t-s)^{\alpha-1 }\| D^2 x^{(n)}(s) \|_{L^{q}(\Omega;\mathcal H^{\otimes 2})} \mathrm{d} s.
\end{align*}
By means of Lemma \ref{lem.Gronwall}, a similar argument as in the proof of \eqref{eq.DrXnLqBound} will yield
\begin{align*} 
\sup_{n\ge0} \sup_{t\in[0,T]} \|D^2 x^{(n)}(t)\|_{L^q(\Omega;\mathcal H^{\otimes 2})} \leq C(\alpha,H,T,\sigma),
\end{align*}
which together with \eqref{eq.DrXnLqBound} proves \textit{Claim 2}. Therefore, $x(t) \in \mathbb{D}^{2,\infty}$ for all $t \in [0,T]$.

It remains to prove the estimates \eqref{Drxt} and \eqref{Drxt-2}. By the chain rule of the Malliavin derivative, we have that for $r \in [0,T]$,
\begin{align}\label{eq.DrXt}
D_r x(t) = \Big( \frac{1}{\Gamma(\alpha)} \int_r^t (t-s)^{\alpha-1 }b^{\prime}(x(s)) D_r x(s) \mathrm{d} s + \frac{\sigma}{\Gamma(\alpha)} (t-r)^{\alpha-1} \Big) \mathbf{1}_{[0,t)}(r).
\end{align}
Therefore,
\begin{align*}
|D_r x(t)| \leq C \Big( \int_r^t (t-s)^{\alpha-1} |D_r x(s)| \mathrm{d} s + (t-r)^{\alpha-1} \Big) \mathbf{1}_{[0,t)}(r),
\end{align*}
from which one sees that \eqref{Drxt} is a direct consequence of the singular Gr\"{o}nwall inequality (see e.g., \cite[Lemma 7.4]{ZhengWang2020}). Now we turn to proving \eqref{Drxt-2}. Taking the Malliavin derivative on both sides of \eqref{eq.DrXt}, and taking the estimates \eqref{Drxt} and \eqref{eq.XtLq} into account, one obtains that for $r_1, \, r_2 \in [0,T]$, $q\geq1$,
\begin{align*}
\| D_{r_2} D_{r_1} x(t) \|_q
&\leq C \bigg( \int_{r_1 \vee r_2}^t (t-s)^{\alpha-1} (s-r_1)^{\alpha-1} (s-r_2)^{\alpha-1} \mathrm{d} s \\
&\quad\ + \int_{r_1 \vee r_2}^t (t-s)^{\alpha-1} \| D_{r_2} D_{r_1} x(s) \|_q \mathrm{d} s \bigg)\mathbf{1}_{[0,t)}(r_1 \vee r_2).
\end{align*}
Then by the singular Gr\"{o}nwall inequality (see e.g., \cite[Lemma 7.4]{ZhengWang2020}), one gets
\begin{align*}
&~\| D_{r_2} D_{r_1} x(t) \|_q \\
\leq &~C \mathbf{1}_{[0,t)}(r_1 \vee r_2) \Big[\int_{r_1 \vee r_2}^t (t-s)^{\alpha-1} (s-r_1)^{\alpha-1} (s-r_2)^{\alpha-1} \mathrm{d} s + \\
&\int_{r_1 \vee r_2}^t \sum_{n=1}^{\infty}\frac{(C \Gamma(\alpha))^n}{\Gamma(n\alpha)}(t-s)^{n\alpha-1}
\int_{r_1 \vee r_2}^s(s-u)^{\alpha-1}(u-r_1)^{\alpha-1}(u-r_2)^{\alpha-1}\mathrm{d} u \mathrm{d} s \Big].
\end{align*}
Notice that by the Fubini theorem and the relation $B(n\alpha,\alpha)=\frac{\Gamma(n\alpha)\Gamma(\alpha)}{\Gamma(n\alpha+\alpha)}$,
\begin{align*}
&\quad\ \int_{r_1 \vee r_2}^t \sum_{n=1}^{\infty} \frac{(C \Gamma(\alpha))^n}{\Gamma(n\alpha)} (t-s)^{n\alpha-1}
\int_{r_1 \vee r_2}^s(s-u)^{\alpha-1}(u-r_1)^{\alpha-1}(u-r_2)^{\alpha-1}\mathrm{d} u \mathrm{d} s \\
&= \int_{r_1 \vee r_2}^t \sum_{n=1}^{\infty} \frac{(C \Gamma(\alpha))^n}{\Gamma(n\alpha)} \left( \int_u^t (t-s)^{n\alpha-1} (s-u)^{\alpha-1} \mathrm{d} s \right) (u-r_1)^{\alpha-1} (u-r_2)^{\alpha-1} \mathrm{d} u \\
&= \Gamma(\alpha) \int_{r_1 \vee r_2}^t \sum_{n=1}^{\infty} \frac{\big( C \Gamma(\alpha) (t-u)^\alpha \big)^n}{\Gamma(n\alpha+\alpha)} (t-u)^{\alpha-1} (u-r_1)^{\alpha-1} (u-r_2)^{\alpha-1} \mathrm{d} u \\
&\leq C E_{\alpha,\alpha} (C \Gamma(\alpha) T^\alpha) \int_{r_1 \vee r_2}^t (t-u)^{\alpha-1} (u-r_1)^{\alpha-1} (u-r_2)^{\alpha-1} \mathrm{d} u,
\end{align*}
where $E_{\alpha,\alpha}(z) = \sum_{n=0}^\infty \frac{z^n}{\Gamma(\alpha n+\alpha)}$, $z\in\mathbb{R}$ is the Mittag--Leffler function, and the exchange of the integrals is justified since the integrand is nonnegative. Gathering the above estimates together proves \eqref{Drxt-2}. The proof is completed.
\end{proof}

\subsection{Estimates of singular integrals}

In this part, we give some estimates of singular integrals, which are vital to the proof of Theorem \ref{thm.mainEM}.

\begin{lemma} 
Let $H \in (1/2, 1)$ and $\alpha \in (1-H,1)$. Then there exists some constant $C$ such that for any $1 \leq i < j \leq N$ and $s \in (t_{i-1}, t_i]$,
\begin{gather}
\int_0^{t_{i-1}} \int_{t_{i-1}}^s (s-u)^{\alpha-1} (u-v)^{2H-2} (t_{j-1}-v)^{\alpha-2H} \mathrm{d}u \mathrm{d} v \leq Ch^{2\alpha-1}, \label{eq.lem5.1-2} \\
\int_0^{t_{i-1}} \int_{v}^{t_{i-1}} \big( (t_{i-1}-u)^{\alpha - 1} - (s-u)^{\alpha - 1} \big) (t_{j-1}-v)^{\alpha - 2H} (u-v)^{2H-2} \mathrm{d}u\mathrm{d}v\leq Ch^{2\alpha-1}. \label{eq.lem5.1-1}
\end{gather}
\end{lemma}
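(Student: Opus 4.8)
The plan is to handle both integrals with one common scheme. I abbreviate $a := t_{i-1}$ and $\delta := s - a \in (0,h]$, and first note that for $i=1$ the outer domain collapses to $\{0\}$ and both left-hand sides vanish, so I may assume $i \geq 2$. I then split the outer $v$-integral at $v = a-h$ into a near part $[a-h,a]$ and a far part $[0,a-h]$. The only factor carrying the index $j$ is $(t_{j-1}-v)^{\alpha-2H}$; since $\alpha-2H<0$ and $t_{j-1}>a$, I bound it by $h^{\alpha-2H}$ on the near part (using $t_{j-1}-v\ge(j-i)h\ge h$) and by $(a-v)^{\alpha-2H}$ on the far part (using $t_{j-1}-v\ge a-v$). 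This erases all $j$-dependence and reduces each inequality to one-dimensional power integrals in $m:=a-v$, which I control via $\int_h^a m^{-\beta}\,\mathrm d m\le Ch^{1-\beta}$ for $\beta>1$.

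For \eqref{eq.lem5.1-2} this is routine. On the far part, where $u\ge a$ and $m=a-v\ge h$, I use $(u-v)^{2H-2}\le(a-v)^{2H-2}=m^{2H-2}$ and integrate out $(s-u)^{\alpha-1}$ over $[a,s]$ to pick up a factor $\delta^{\alpha}\le Ch^{\alpha}$; what remains is $Ch^{\alpha}\int_h^a m^{\alpha-2}\,\mathrm d m\le Ch^{2\alpha-1}$ because $\alpha-2<-1$. On the near part I instead exchange the order of integration and use the subadditivity of $w\mapsto w^{2H-1}$ to get $\int_{a-h}^a(u-v)^{2H-2}\,\mathrm d v\le Ch^{2H-1}$, after which $\int_a^s(s-u)^{\alpha-1}\,\mathrm d u\le Ch^{\alpha}$ leaves $h^{\alpha-2H}\cdot Ch^{\alpha+2H-1}=Ch^{2\alpha-1}$.

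The delicate estimate is \eqref{eq.lem5.1-1}, whose integrand is singular at $u=a$. The crucial move is the substitution $p=a-u$, which rewrites the inner integral as
\begin{align*}
K(v) := \int_0^m \big[ p^{\alpha-1} - (p+\delta)^{\alpha-1} \big] (m-p)^{2H-2} \, \mathrm d p, \qquad m = a-v .
\end{align*}
Bounding the two terms separately via $p^{\alpha-1}-(p+\delta)^{\alpha-1}\le p^{\alpha-1}$ only yields $K(v)\le Cm^{\alpha+2H-2}$, which on the far part integrates to $\int_h^a m^{2\alpha-2}\,\mathrm d m$ --- too large once $\alpha\ge 1/2$. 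To recover the lost smallness I split the $p$-integral at $p=m/2$: on $[0,m/2]$ I bound $(m-p)^{2H-2}\le Cm^{2H-2}$ and integrate the difference \emph{exactly}, using $\int_0^{m/2}\big[p^{\alpha-1}-(p+\delta)^{\alpha-1}\big]\,\mathrm d p\le\tfrac1\alpha\delta^{\alpha}\le Ch^{\alpha}$; on $[m/2,m]$ I apply the mean value theorem to get $p^{\alpha-1}-(p+\delta)^{\alpha-1}\le(1-\alpha)\delta p^{\alpha-2}\le C\delta m^{\alpha-2}$ and then integrate $(m-p)^{2H-2}$. This gives $K(v)\le Ch^{\alpha}m^{2H-2}+Chm^{\alpha+2H-3}$ on the far part, so that the far contribution is at most $C\int_h^a\big(h^{\alpha}m^{\alpha-2}+hm^{2\alpha-3}\big)\,\mathrm d m\le Ch^{2\alpha-1}$, using $\alpha-2<-1$ and $2\alpha-3<-1$. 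On the near part the crude bound $K(v)\le Cm^{\alpha+2H-2}$ suffices, since the domain has length $h$: $h^{\alpha-2H}\int_0^h m^{\alpha+2H-2}\,\mathrm d m\le Ch^{2\alpha-1}$, the integral being finite because $\alpha+2H-1>0$.

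The hard part is exactly this cancellation in \eqref{eq.lem5.1-1} near $u=a$: one cannot estimate the two singular terms individually, and the split at $p=m/2$ is precisely what lets the exact antiderivative (near $p=0$) and the mean value theorem (away from $p=0$) each operate where it is sharp. Everything else is bookkeeping; the convergence of the elementary integrals rests on the standing relations $2H-1>0$, $-1<\alpha-1$, $\alpha-2<-1$, $2\alpha-3<-1$ and $\alpha+2H-1>0$, all consequences of $H\in(1/2,1)$ and $\alpha\in(1-H,1)$ (in particular $\alpha>1-H>1-2H$).
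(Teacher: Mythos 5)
Your proof is correct, and for the key estimate \eqref{eq.lem5.1-1} it takes a genuinely different route from the paper's. The paper never splits the outer $v$-integral geometrically: for \eqref{eq.lem5.1-2} it removes the $j$-dependence by a case distinction in $\alpha$ (bounding $(t_{j-1}-v)^{\alpha-2H}\le(u-v)^{\alpha-2H}$ when $2\alpha\ge 1$, and writing $(t_{j-1}-v)^{\alpha-2H}\le h^{2\alpha-1}(t_{j-1}-v)^{1-\alpha-2H}$ when $2\alpha<1$), finishing both cases with a single Beta-function bound. For \eqref{eq.lem5.1-1} it avoids any pointwise estimate of the cancellation altogether: it evaluates the inner integral \emph{exactly},
\begin{align*}
&\int_{v}^{t_{i-1}} \big( (t_{i-1}-u)^{\alpha - 1} - (s-u)^{\alpha - 1} \big) (u-v)^{2H-2} \mathrm{d}u \\
&\quad= B(\alpha,2H-1) \big( (t_{i-1}-v)^{\alpha+2H-2} - (s-v)^{\alpha+2H-2} \big) + \int_{t_{i-1}}^s (s-u)^{\alpha-1} (u-v)^{2H-2} \mathrm{d}u,
\end{align*}
so that the singular difference of kernels collapses into a difference of two power functions of $v$ (handled via $t_{j-1}-v\ge h$ and the elementary inequality \eqref{chi}, again with a split at $\alpha=2-2H$) plus a boundary term whose contribution is exactly the left-hand side of \eqref{eq.lem5.1-2}. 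Your midpoint split of the $p$-integral --- exact antiderivative on $[0,m/2]$, mean value theorem on $[m/2,m]$ --- extracts the same cancellation by hand, and your near/far split at $v=t_{i-1}-h$ replaces the paper's algebraic case analysis on $\alpha$. What each approach buys: the paper's identity makes \eqref{eq.lem5.1-1} an almost immediate corollary of \eqref{eq.lem5.1-2}, and that identity (its \eqref{eq.Beta}) is reused later in the proof of Lemma \ref{lem.impLemUse}; your argument costs a longer computation but is self-contained, requires no case distinctions in $\alpha$ whatsoever, and makes explicit where the small factors $h^{\alpha}$ and $h$ are produced.
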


\begin{proof}
For $\beta \in (-2H,1-2H)$, using \eqref{eq.Betafun} yields
\begin{align}\label{eq.beta}\notag
&\quad\ \int_0^{t_{i-1}} \int_{t_{i-1}}^s (s-u)^{\alpha-1} (u-v)^{2H-2} (t_{j-1}-v)^{\beta} \mathrm{d}u \mathrm{d} v\\
& \leq \int_{t_{i-1}}^s \int_0^{t_{i-1}} (s-u)^{\alpha - 1} (u-v)^{2H-2+\beta} \mathrm{d}v \mathrm{d}u
\leq C (s - t_{i-1})^{\alpha+2H-1+\beta}.
\end{align}
We proceed to prove \eqref{eq.lem5.1-2} by separating the cases $2\alpha\geq1$ and $2\alpha<1$. When $2\alpha\geq1$, applying $s - t_{i-1} \leq h$ and \eqref{eq.beta} with $\beta = \alpha-2H$ yields \eqref{eq.lem5.1-2}. While $2\alpha<1$, by $t_{j-1}-v \geq h$, for $v \in [0,t_{i-1}]$, we have
\begin{align*}
&\quad\ \int_0^{t_{i-1}} \int_{t_{i-1}}^s (s-u)^{\alpha-1} (u-v)^{2H-2} (t_{j-1}-v)^{\alpha-2H} \mathrm{d}u \mathrm{d} v \\
&= \int_{t_{i-1}}^s \int_0^{t_{i-1}} (s-u)^{\alpha - 1} (u-v)^{2H-2} (t_{j-1}-v)^{2\alpha - 1} (t_{j-1}-v)^{1-\alpha - 2H} \mathrm{d}v \mathrm{d}u \\
&\leq h^{2\alpha-1} \int_{t_{i-1}}^s \int_0^{t_{i-1}} (s-u)^{\alpha - 1} (u-v)^{2H-2} (t_{j-1}-v)^{1-\alpha - 2H} \mathrm{d}v \mathrm{d}u \leq C h^{2\alpha-1},
\end{align*}
thanks to \eqref{eq.beta} with $\beta=1-\alpha - 2H \in (-2H,1-2H)$.

Now we turn to the proof of \eqref{eq.lem5.1-1}. Note that for any $\theta\in(-1,0)$, there exists $C=C(\theta)$ such that
\begin{align}\label{chi}
\int_0^{u} (u-r)^{\theta} - (t-r)^{\theta}\mathrm{d} r \leq C(t-u)^{1+\theta},\quad \forall~0\leq u<t\leq T.
\end{align}
We also claim
\begin{align}\label{eq.lem5.1-3}
\int_0^{t_{i-1}} \big( (t_{i-1}-v)^{\alpha+2H-2} - (s-v)^{\alpha+2H-2} \big) (t_{j-1}-v)^{\alpha - 2H} \mathrm{d}v \leq C h^{2\alpha - 1}.
\end{align}
Indeed, for $\alpha \in [2-2H,1)$, \eqref{eq.lem5.1-3} holds trivially since $t_{i-1} < s$ and $\alpha+2H-2 \geq 0$. For $\alpha \in (1-H,2-2H)$, using $t_{j-1}-v \geq h$, for $v \in [0,t_{i-1}]$, yields
\begin{align*}
&\quad\ \int_0^{t_{i-1}} \big( (t_{i-1}-v)^{\alpha+2H-2} - (s-v)^{\alpha+2H-2} \big) (t_{j-1}-v)^{\alpha - 2H} \mathrm{d}v \\
&\leq h^{\alpha - 2H} \int_0^{t_{i-1}} (t_{i-1}-v)^{\alpha+2H-2} - (s-v)^{\alpha+2H-2} \mathrm{d}v \leq C h^{2\alpha - 1},
\end{align*}
thanks to \eqref{chi} with $\theta = \alpha+2H-2 \in (-1,0)$, which proves \eqref{eq.lem5.1-3}. Using \eqref{eq.Betafun} again,
\begin{align}\label{eq.Beta}
&\quad\ \int_{v}^{t_{i-1}} \big( (t_{i-1}-u)^{\alpha - 1} - (s-u)^{\alpha - 1} \big) (u-v)^{2H-2} \mathrm{d}u \\
&= B(\alpha,2H-1) \Big( (t_{i-1}-v)^{\alpha+2H-2} - (s-v)^{\alpha+2H-2} \Big) + \int_{t_{i-1}}^s (s-u)^{\alpha-1} (u-v)^{2H-2} \mathrm{d}u.\notag
\end{align}
Finally, \eqref{eq.lem5.1-1} follows from \eqref{eq.lem5.1-2}, \eqref{eq.lem5.1-3} and \eqref{eq.Beta}. The proof is completed.
\end{proof}

\begin{lemma}\label{eq.ijts}
Let $H \in (1/2, 1)$ and $\alpha \in (1-H,1)$. Then there exists some constant $C$ such that for any $0 < u \leq t_{k-1} < s \leq t_k$ and $ \tau \in(t_{l-1},t_l]$ with $1 \leq k < l \leq N$, 
\begin{align*}
\int_{t_{k-1}}^{t_{l-1}}
\big( (t_{l-1}-v)^{\alpha - 1} - (\tau-v)^{\alpha - 1} \big) |v-u|^{2H-2} \mathrm{d}v\leq C h^{\alpha} (\tau -s)^{2H-2}.
\end{align*}
\end{lemma}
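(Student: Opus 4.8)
The plan is to use the sign of the integrand to reduce to a worst case, and then to split the interval of integration at its midpoint so as to separate the singularity of the weight $|v-u|^{2H-2}$ near the left endpoint from the near-singularity of the power difference near the right endpoint. Since $t_{k-1} \le v$ and $u \le t_{k-1}$, one has $|v-u| = v-u$, and as $v \le t_{l-1} < \tau$ with $\alpha-1<0$ the power difference $(t_{l-1}-v)^{\alpha-1}-(\tau-v)^{\alpha-1}$ is nonnegative on $[t_{k-1},t_{l-1}]$. Because $2H-2<0$ and $v-u \ge v-t_{k-1}$, we have $(v-u)^{2H-2} \le (v-t_{k-1})^{2H-2}$, so it suffices to bound the integral with $u$ replaced by $t_{k-1}$, namely
\[
I := \int_{t_{k-1}}^{t_{l-1}} \big( (t_{l-1}-v)^{\alpha-1} - (\tau-v)^{\alpha-1} \big) (v-t_{k-1})^{2H-2}\, \mathrm{d}v .
\]
Throughout I write $\Delta := t_{l-1}-t_{k-1} = (l-k)h \ge h$ and $\delta := \tau - t_{l-1} \in (0,h]$.

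First I would split $I = I_1 + I_2$ at the midpoint $m := (t_{k-1}+t_{l-1})/2$. On $[t_{k-1},m]$ the power difference is controlled by the mean value theorem: with $a := t_{l-1}-v \ge \Delta/2$ and $\alpha-2<0$, $a^{\alpha-1}-(a+\delta)^{\alpha-1} \le (1-\alpha)\delta\, a^{\alpha-2} \le (1-\alpha)\delta\,(\Delta/2)^{\alpha-2}$, while $\int_{t_{k-1}}^m (v-t_{k-1})^{2H-2}\,\mathrm{d}v \le C\Delta^{2H-1}$ since $2H-2>-1$; multiplying gives $I_1 \le C\delta\,\Delta^{\alpha+2H-3}$. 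On $[m,t_{l-1}]$ the weight is harmless, $(v-t_{k-1})^{2H-2} \le C\Delta^{2H-2}$, so $I_2 \le C\Delta^{2H-2}\int_m^{t_{l-1}} \big( (t_{l-1}-v)^{\alpha-1} - (\tau-v)^{\alpha-1} \big)\mathrm{d}v$; substituting $a = t_{l-1}-v$ and using the antiderivative $a^{\alpha}/\alpha$ evaluates this last integral to the telescoping quantity $\tfrac1\alpha\big( (\Delta/2)^{\alpha} - (\Delta/2+\delta)^{\alpha} + \delta^{\alpha} \big) \le \delta^{\alpha}/\alpha$, whence $I_2 \le C\Delta^{2H-2}\delta^{\alpha}$.

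It then remains to convert $I \le C\delta\,\Delta^{\alpha+2H-3} + C\Delta^{2H-2}\delta^{\alpha}$ into the stated bound, using only $\delta \le h \le \Delta$ and $\alpha-1<0$. In the first term $\delta\,\Delta^{\alpha+2H-3} = \delta\,\Delta^{\alpha-1}\,\Delta^{2H-2} \le h\cdot h^{\alpha-1}\,\Delta^{2H-2} = h^{\alpha}\Delta^{2H-2}$, and the second is at most $h^{\alpha}\Delta^{2H-2}$ directly, so $I \le C h^{\alpha}\Delta^{2H-2}$. Finally $s > t_{k-1}$ together with $\delta \le h \le \Delta$ gives $\tau - s < \tau - t_{k-1} = \Delta + \delta \le 2\Delta$, and since $2H-2<0$ this yields $\Delta^{2H-2} \le 2^{2-2H}(\tau-s)^{2H-2}$, which delivers $I \le Ch^{\alpha}(\tau-s)^{2H-2}$.

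I expect the main obstacle to be the exponent bookkeeping in the midpoint split: the subdivision must be arranged so that the weight singularity and the power-difference near-singularity sit on opposite subintervals, with the mean value estimate used on one side and an exact (telescoping) integration on the other, so that both contributions collapse to precisely $h^{\alpha}\Delta^{2H-2}$. The closing observation that $\tau - s$ and $\Delta$ are comparable (indeed $\tau-s \le 2\Delta$) is what allows the geometrically natural factor $\Delta^{2H-2}$ to be rewritten as $(\tau-s)^{2H-2}$.
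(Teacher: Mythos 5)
Your proof is correct. It follows the same general strategy as the paper's proof -- split the integration interval into two halves so that the weight singularity and the kernel-difference near-singularity land on opposite pieces, use the mean value theorem on the far piece and an integrated (telescoping) bound on the near piece -- but the execution differs in ways worth noting. The paper first distinguishes the cases $k<l\leq k+2$ (where a crude Beta-function bound suffices because $\tau-s\leq 3h$) and $k+3\leq l\leq N$ (where it splits at $\frac{\tau+s}{2}$ and must verify pointwise comparabilities such as $\tau-v\leq 8(t_{l-1}-v)$ and $\tau-s\leq 2(v-u)$ to inject the factor $(\tau-s)^{2H-2}$ into each piece separately); the case distinction is forced precisely by these pointwise comparisons. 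You avoid all of this by (i) monotonically replacing the weight $|v-u|^{2H-2}$ by $(v-t_{k-1})^{2H-2}$ at the outset, (ii) splitting at the midpoint of $[t_{k-1},t_{l-1}]$, which always lies in the interval, and (iii) carrying the single length scale $\Delta=t_{l-1}-t_{k-1}$ through both estimates, converting $\Delta^{2H-2}$ into $(\tau-s)^{2H-2}$ only at the very end via the uniform inequality $\tau-s\leq\Delta+\delta\leq 2\Delta$. The exact evaluation $\frac{1}{\alpha}\big((\Delta/2)^{\alpha}-(\Delta/2+\delta)^{\alpha}+\delta^{\alpha}\big)\leq\delta^{\alpha}/\alpha$ on the right half plays the role of the paper's inequality \eqref{chi}. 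The net effect is a unified argument with no case analysis and with all constants visibly depending only on $\alpha$ and $H$; this is a genuine streamlining of the paper's proof, at no loss of generality.
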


\begin{proof}
\emph{Case 1: $k < l \leq k+2$.} In this case, $\tau - s \leq 3h$. Then, by \eqref{eq.Betafun},
\begin{align*}
&\quad\ \int_{t_{k-1}}^{t_{l-1}}
\big( (t_{l-1}-v)^{\alpha - 1} - (\tau-v)^{\alpha - 1} \big) |v-u|^{2H-2} \mathrm{d}v\\
&\leq\int_{t_{k-1}}^{t_{l-1}}(t_{l-1}-v)^{\alpha - 1} (v-t_{k-1})^{2H-2} \mathrm{d}v
\leq C h^{\alpha + 2H -2 }
\leq C h^{\alpha} (\tau-s)^{2H-2}.
\end{align*}

\emph{Case 2: $k+3 \leq l \leq N.$} In this case, $t_{k-1} \leq \frac{\tau+s}{2} \leq t_{l-1} - \frac{1}{2}h$. For $v\in(t_{k-1},\frac{\tau+s}{2})$, we obtain
\begin{align*}
\tau -v \leq (l-k+1)h \leq 4(l-k-2)h = 8(t_{l-1} - \frac{t_l + t_k}{2}) 
\leq 8(t_{l-1} -v),
\end{align*}
which implies $\tau-s \leq 2(\tau -v) \leq 16(t_{l-1}-v)$ and
\begin{align*}
(t_{l-1}-v)^{\alpha - 1} - (\tau-v)^{\alpha - 1}&\leq (1-\alpha)(t_{l-1}-v)^{\alpha - 2}h \leq Ch(t_{l-1}-v)^{\alpha - 2H}(\tau-s)^{2H-2}.
\end{align*}
Hence, it follows from $u\leq t_{k-1}$ and $t_{l-1} - v \geq \frac{1}{2} h$ for $v \in (t_{k-1}, \frac{\tau+s}{2})$ that
\begin{align*}
&\quad\ \int_{t_{k-1}}^{\frac{\tau+s}{2}} \big( (t_{l-1}-v)^{\alpha - 1} - (\tau-v)^{\alpha - 1} \big) |v-u|^{2H-2} \mathrm{d}v \\
& \leq Ch\int_{t_{k-1}}^{\frac{\tau+s}{2}} (t_{l-1}-v)^{\alpha - 2H} (v-u)^{2H-2} \mathrm{d}v (\tau -s)^{2H-2} \\
&\leq C h^{\alpha} \int_{t_{k-1}}^{t_{l-1}} (t_{l-1}-v)^{1 - 2H} (v-t_{k-1})^{2H-2} \mathrm{d}v (\tau -s)^{2H-2} \leq C h^{\alpha} (\tau -s)^{2H-2}.
\end{align*}
For $v \in (\frac{\tau+s}{2}, t_{l-1})$, we have $\tau-s \leq 2(v-s)\leq 2(v-u)$, for $u\leq t_{k-1}<s$, and then
\begin{align*}
&\quad\ \int_{\frac{\tau+s}{2}}^{t_{l-1}} \big( (t_{l-1}-v)^{\alpha - 1} - (\tau-v)^{\alpha - 1} \big) |v-u|^{2H-2} \mathrm{d}v \\
&\leq C \int_{\frac{\tau+s}{2}}^{t_{l-1}} \big( (t_{l-1}-v)^{\alpha - 1} - (\tau-v)^{\alpha - 1} \big) \mathrm{d}v (\tau -s)^{2H-2} \leq C h^{\alpha} (\tau -s)^{2H-2},
\end{align*}
which completes the proof.
\end{proof}

\begin{lemma} \label{lem.midNext}
Assume either
\begin{align*}
\mbox{\emph{(C1)}}~\beta<0 \mbox{ and }\gamma<0 \mbox{ with } -1<\beta+\gamma<0 \quad \mbox{or }\quad \mbox{\emph{(C2)}}~ \beta\geq 0,~\gamma\in(-1,0).
\end{align*}
Then there exists some constant $C$ such that for any $\tau \in (t_{j-1},t_j]$ and $\eta \in (0,t_{j-1}]$ with $1< j \leq N$, 
\begin{align}\label{eq.gammabeta}
& \int_0^{\eta} (\eta-r)^{\beta} \big( (t_{j-1}-r)^{\gamma} - (\tau-r)^{\gamma} \big) \mathrm{d} r \leq C h^{(\beta+\gamma+1)\wedge(\gamma+1)}.
\end{align}
\end{lemma}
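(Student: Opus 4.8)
The plan is to reduce both cases to explicit one-dimensional integrals and to exploit two elementary pointwise bounds on the increment $\Delta(r) := (t_{j-1}-r)^{\gamma} - (\tau-r)^{\gamma}$. Since $\gamma<0$ and $t_{j-1}<\tau$ we have $\Delta(r)\ge 0$, together with the \emph{direct bound} $\Delta(r)\le (t_{j-1}-r)^{\gamma}$ (dropping the nonnegative subtracted term) and the \emph{mean-value bound} $\Delta(r)\le |\gamma|(\tau-t_{j-1})(t_{j-1}-r)^{\gamma-1}\le |\gamma|h\,(t_{j-1}-r)^{\gamma-1}$. The direct bound is efficient where $t_{j-1}-r$ is of order $h$, the mean-value bound where it is large; balancing the two at the scale $h$ is what produces the exponent $\beta+\gamma+1$.

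Case (C2) is immediate. Here $\beta\ge 0$ gives $(\eta-r)^{\beta}\le \eta^{\beta}\le T^{\beta}$, so it suffices to bound $\int_0^{\eta}\Delta(r)\,\mathrm dr$. Writing $\delta:=\tau-t_{j-1}\in(0,h]$ and integrating the two power terms explicitly (after extending the domain from $[0,\eta]$ to $[0,t_{j-1}]$, which is legitimate since $\Delta\ge 0$), the antiderivatives telescope to $\tfrac1{\gamma+1}\big(t_{j-1}^{\gamma+1}-(t_{j-1}+\delta)^{\gamma+1}+\delta^{\gamma+1}\big)$; the first difference is $\le 0$ because $\gamma+1>0$, leaving the bound $C\delta^{\gamma+1}\le Ch^{\gamma+1}$. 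Since $\beta\ge 0$ forces $(\beta+\gamma+1)\wedge(\gamma+1)=\gamma+1$, this is exactly the claim.

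For case (C1) I would substitute $x:=t_{j-1}-r$, turning the integral into $\int_{c}^{t_{j-1}}(x-c)^{\beta}\big(x^{\gamma}-(x+\delta)^{\gamma}\big)\,\mathrm dx$ with $c:=t_{j-1}-\eta\ge 0$. This displays the two singularities cleanly: the weight $(x-c)^{\beta}$ blows up at $x=c$ and the increment at $x=0$. I would then split on the mutual distance $c$ versus $h$. When $c\le h$ the two singularities lie within $h$ of each other: on $[c,h]$ use the direct bound together with $x^{\gamma}\le (x-c)^{\gamma}$ (valid as $x\ge x-c$, $\gamma<0$) so that the integrand is $\le (x-c)^{\beta+\gamma}$, whose integral is $\le Ch^{\beta+\gamma+1}$ precisely because $\beta+\gamma+1>0$; on $[h,t_{j-1}]$ use the mean-value bound and $x-c\gtrsim x$ to reduce to $|\gamma|h\int x^{\beta+\gamma-1}\,\mathrm dx$, which converges at infinity because $\beta+\gamma<0$ and contributes $\le Ch\cdot h^{\beta+\gamma}=Ch^{\beta+\gamma+1}$. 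When $c>h$ the increment stays in the mean-value regime on all of $[c,t_{j-1}]$; splitting this interval at $2c$, using $x-c\gtrsim x$ on $[2c,t_{j-1}]$ and $x^{\gamma-1}\le c^{\gamma-1}$ on $[c,2c]$ collapses everything to $C\,|\gamma|h\,c^{\beta+\gamma}$, and $c>h$ with $\beta+\gamma<0$ gives $c^{\beta+\gamma}\le h^{\beta+\gamma}$, hence again $Ch^{\beta+\gamma+1}$.

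The main obstacle is exactly the coupling of the two singularities at scale $h$: any single global bound (pure direct, pure mean-value, or replacing $x^{\gamma-1}$ by its maximal value over the whole range) overshoots and yields only $h^{\gamma}$, which is \emph{weaker} than the target since $\beta>-1$. The proof therefore hinges on using the two-sided hypothesis $-1<\beta+\gamma<0$ in both directions — $\beta+\gamma+1>0$ for integrability of the combined singularity near the diagonal, and $\beta+\gamma<0$ for convergence of the complementary tail — and on choosing the splitting point ($h$, respectively $2c$) so that each piece is controlled by $h^{\beta+\gamma+1}$. A few boundary sub-cases (such as $c$ comparable to $h$, or $j=2$ where $t_{j-1}=h$ makes the far region degenerate) need to be checked, but these are routine.
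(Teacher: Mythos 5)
Your proposal is correct in substance, but in case (C1) it takes a genuinely different route from the paper's. The paper stays in the original variable and exploits the grid: taking the unique $k$ with $t_{k-1}<\eta\le t_k\le t_{j-1}$, it first uses that $a\mapsto (a-r)^{\gamma}-(a+h-r)^{\gamma}$ is decreasing in $a$ (for $\gamma<0$) to translate the far increment $(t_{j-1}-r)^{\gamma}-(\tau-r)^{\gamma}$ into the aligned one $(t_{k-1}-r)^{\gamma}-(t_k-r)^{\gamma}$, then uses $\beta<0$ to worsen the weight, $(\eta-r)^{\beta}\le (t_{k-1}-r)^{\beta}$, so that on $[0,t_{k-1}]$ everything collapses into the single telescoping difference $(t_{k-1}-r)^{\beta+\gamma}-(t_k-r)^{\beta+\gamma}$, handled by the elementary inequality \eqref{chi}; the leftover piece $\int_{t_{k-1}}^{\eta}$ is bounded by $\int_{t_{k-1}}^{\eta}(\eta-r)^{\beta+\gamma}\,\mathrm dr$. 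Your substitution $x=t_{j-1}-r$ and two-regime analysis (direct bound near the increment's singularity, mean-value bound away from it, with a split according to whether $c=t_{j-1}-\eta$ exceeds $h$) reaches the same exponent by hand. The paper's argument is shorter and avoids any case distinction on $c$; yours is grid-free, makes explicit how $-1<\beta+\gamma<0$ is used from both sides, and identifies the balancing scale $h$ directly. In case (C2) the two proofs are essentially identical (your explicit antiderivative computation is exactly what \eqref{chi} encodes).

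One step of your case $c\le h$ fails as written: on $[h,t_{j-1}]$ you invoke $x-c\gtrsim x$, but for $c$ comparable to $h$ (say $c=h$) and $x$ near $h$ the ratio $(x-c)/x$ is arbitrarily small, and since $\beta<0$ the pointwise bound $(x-c)^{\beta}\le Cx^{\beta}$ is false there. You flag this as a routine boundary sub-case, and it is; the cleanest repair is to split at $x=c+h$ instead of $x=h$: on $[c,c+h]$ the direct bound and $x^{\gamma}\le (x-c)^{\gamma}$ give $\int_c^{c+h}(x-c)^{\beta+\gamma}\,\mathrm dx\le Ch^{\beta+\gamma+1}$, while on $[c+h,t_{j-1}]$ the mean-value bound together with $x^{\gamma-1}\le (x-c)^{\gamma-1}$ (valid since $x\ge x-c>0$ and $\gamma-1<0$) gives at most $|\gamma|h\int_{c+h}^{\infty}(x-c)^{\beta+\gamma-1}\,\mathrm dx=Ch\cdot h^{\beta+\gamma}$ because $\beta+\gamma<0$. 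This repaired split works uniformly in $c\ge 0$, so it in fact eliminates your case distinction $c\le h$ versus $c>h$ altogether and shortens the argument.
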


\begin{proof}
We first prove \eqref{eq.gammabeta} under condition (C1).
Notice that there exists a unique integer $k$ satisfying $t_{k-1} < \eta \leq t_k \leq t_{j-1}$.
Since $\gamma<0$, we have for all $r \in (0,t_{k-1}),$
\begin{align*}
&\quad\ (t_{j-1} - r)^{\gamma} - (\tau - r)^{\gamma} \leq (t_{j-1} - r)^{\gamma} - (t_j - r)^{\gamma}\le
(t_{k-1} - r)^{\gamma} - (t_{k} - r)^{\gamma}, 
\end{align*}
which together with $\beta<0$ and $\beta+\gamma\in(-1,0)$ indicates
\begin{align*}
&\quad\ \int_0^{t_{k-1}} (\eta -r)^{\beta} \big( (t_{j-1}-r)^{\gamma} - (\tau-r)^{\gamma} \big) \mathrm{d} r \\
&\leq \int_0^{t_{k-1}} (t_{k-1} -r)^{\beta} \big( (t_{k-1}-r)^{\gamma} - (t_k-r)^{\gamma} \big) \mathrm{d} r\\
&\leq \int_0^{t_{k-1}} (t_{k-1}-r)^{\beta+\gamma} - (t_k-r)^{\beta+\gamma} \mathrm{d} r\leq Ch^{\beta+\gamma+1},
\end{align*}
due to \eqref{chi} with $\theta=\beta+\gamma$.
In addition, since $t_{k-1}<\eta\leq t_{k}\leq t_{j-1}$ and $\gamma<0$,
\begin{align*}
&\quad\ \int_{t_{k-1}}^{\eta} (\eta -r)^{\beta} \big( (t_{j-1}-r)^{\gamma} - (\tau-r)^{\gamma} \big) \mathrm{d} r
\leq \int_{t_{k-1}}^{\eta} (\eta -r)^{\beta+\gamma} \mathrm{d} r \leq C h^{\beta+\gamma+1}.
\end{align*}
Thus, one can conclude that \eqref{eq.gammabeta} holds under condition (C1).

It remains to prove \eqref{eq.gammabeta} under condition (C2). Actually, observe that $\beta\ge0$ ensures $(\eta-r)^{\beta}\leq T^\beta$ for all $r\in[0,\eta]$. Hence, it follows from \eqref{chi} with $\theta=\gamma$ that
\begin{align*}
& \int_0^{\eta} (\eta-r)^{\beta} \big( (t_{j-1}-r)^{\gamma} - (\tau-r)^{\gamma} \big) \mathrm{d} r\leq C\int_0^{t_{j-1}}(t_{j-1}-r)^{\gamma} - (\tau-r)^{\gamma} \mathrm{d} r
 \leq C h^{\gamma+1}.
\end{align*}
The proof is complete.
\end{proof}

\begin{lemma} \label{lem.impLemUse}
Let $H \in (1/2,1)$ and $\alpha \in (1-H,1)$. Then there exists some constant $C$ such that for any $\eta \in [0,T]$ and $\tau \in (t_{j-1},t_j]$ with $1 \leq j \leq N$, 
\begin{gather}
\int_0^{\eta} \int_{t_{j-1}}^{\tau} (\eta-r)^{\alpha-1} (\tau-v)^{\alpha - 1} |v-r|^{2H-2} \mathrm{d} v \mathrm{d} r \leq C\mathcal{R}_{H, \alpha} (h), \label{eq.lem3.1} \\
\int_0^{\eta} \int_0^{t_{j-1}} (\eta-r)^{\alpha-1} \big( (t_{j-1}-v)^{\alpha - 1} - (\tau-v)^{\alpha - 1} \big) |v-r|^{2H-2} \mathrm{d} v \mathrm{d} r \leq C\mathcal{R}_{H, \alpha} (h), \label{eq.lem3.2}
\end{gather}
where $\mathcal{R}_{H, \alpha} (h)$ is defined in Theorem \ref{thm.mainEM}.
\end{lemma}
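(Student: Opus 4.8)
The plan is to reduce both \eqref{eq.lem3.1} and \eqref{eq.lem3.2} to a sharp analysis of the inner single integral in the variable $r$,
\[
K(v):=\int_0^{\eta}(\eta-r)^{\alpha-1}|v-r|^{2H-2}\,\mathrm{d}r,
\]
and then to integrate $K$ against the remaining factor in $v$. First I would estimate $K(v)$ by distinguishing the regimes $v<\eta$ and $v\ge\eta$. When $v<\eta$ the integrand has an interior singularity at $r=v$; splitting the $r$-integral there lets me evaluate the singular piece $\int_v^{\eta}(\eta-r)^{\alpha-1}(r-v)^{2H-2}\,\mathrm{d}r=B(\alpha,2H-1)(\eta-v)^{\alpha+2H-2}$ exactly via \eqref{eq.Betafun}, while the remaining piece $\int_0^v(\eta-r)^{\alpha-1}(v-r)^{2H-2}\,\mathrm{d}r$ is regular. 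When $v\ge\eta$ there is no interior singularity, and the substitution $u=\eta-r$ turns $K$ into $\int_0^{\eta}u^{\alpha-1}(u+v-\eta)^{2H-2}\,\mathrm{d}u$, which I would split at $u=v-\eta$. In every case the decisive exponent is $\alpha+2H-3$: the model integral $\int u^{\alpha+2H-3}\,\mathrm{d}u$ converges at the origin precisely when $\alpha>2-2H$, is logarithmically divergent when $\alpha=2-2H$, and is dominated by its lower endpoint (leaving a factor $(\eta-v)^{\alpha+2H-2}$) when $\alpha<2-2H$. This is exactly where the three cases of $\mathcal{R}_{H,\alpha}(h)$ originate.

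For \eqref{eq.lem3.1} I would integrate $K(v)$ against $(\tau-v)^{\alpha-1}$ over the short window $v\in(t_{j-1},\tau]$ of length at most $h$, splitting according to the position of $\eta$: to the left $(\eta\le t_{j-1})$, inside $(t_{j-1}<\eta\le\tau)$, or to the right $(\eta>\tau)$ of the window. The dominant contribution comes from the near-diagonal regime $\eta\approx\tau$, where the singular piece $(\eta-v)^{\alpha+2H-2}$ meets $(\tau-v)^{\alpha-1}$; there $\int_{t_{j-1}}^{\tau}(\tau-v)^{\alpha-1}(\eta-v)^{\alpha+2H-2}\,\mathrm{d}v$ carries the exponent $2(\alpha+H-1)$ and produces the rate $h^{2(H+\alpha-1)}$ when $\alpha<2-2H$ and the logarithm when $\alpha=2-2H$. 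When $\alpha>2-2H$ the kernel $K$ is bounded, so the window width alone gives $\int_{t_{j-1}}^{\tau}(\tau-v)^{\alpha-1}\,\mathrm{d}v\le Ch^{\alpha}$; since $h^{\alpha}\le h^{2(H+\alpha-1)}$ exactly when $\alpha\ge2-2H$, every contribution is consistently absorbed into $\mathcal{R}_{H,\alpha}(h)$.

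For \eqref{eq.lem3.2} the gain comes instead from the smallness of the difference $(t_{j-1}-v)^{\alpha-1}-(\tau-v)^{\alpha-1}$ over $v\in[0,t_{j-1}]$. I would isolate the cell adjacent to $t_{j-1}$, on which the difference is of size $(t_{j-1}-v)^{\alpha-1}$ but the kernel $|v-r|^{2H-2}$ is controlled by the companion estimate of Lemma \ref{eq.ijts}, from the far part where $v$ is bounded away from $t_{j-1}$, on which \eqref{chi} bounds the difference by $Ch\,(t_{j-1}-v)^{\alpha-2}$; the one-dimensional integrals that then remain are of the type controlled by Lemma \ref{lem.midNext}. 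Combining the pieces (noting that the interior singularity of $K$ at $v=\eta$, when $\eta\le t_{j-1}$, stays integrable since $\alpha+2H-1>0$) again yields $\mathcal{R}_{H,\alpha}(h)$.

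The main obstacle is precisely the near-diagonal overlapping regime $\eta\approx\tau$, where the two singular kernels $(\eta-r)^{\alpha-1}$ and $|v-r|^{2H-2}$ reinforce and the relevant exponent $\alpha+2H-3$ sits on the boundary of integrability. Extracting the sharp rate there — and in particular capturing the borderline logarithm at $\alpha=2-2H$ rather than an artificial power loss — is the delicate point; away from this regime the separation between $[0,\eta]$ and the window, or the smallness of the difference, renders the estimates routine.
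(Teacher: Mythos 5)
Your strategy is sound and would deliver both estimates, but it is organized differently from the paper's proof, and one ingredient you cite is miscast. The paper proves \eqref{eq.lem3.1} by a four-way case analysis on the position of $\eta$ (in $[0,t_1]$, $(t_1,t_{j-1}]$, $(t_{j-1},\tau]$, $(\tau,T]$), in each case integrating out the window variable $v$ first via the Beta identity \eqref{eq.Betafun} (producing $(\tau-r)^{\alpha+2H-2}$) or separating variables on $r\leq\eta-h$; it never isolates your kernel $K(v)=\int_0^{\eta}(\eta-r)^{\alpha-1}|v-r|^{2H-2}\,\mathrm{d}r$. For \eqref{eq.lem3.2} the paper splits according to $v<r$ versus $v>r$, applies the Beta identity, and then invokes Lemma \ref{lem.midNext} (through \eqref{eq.eta1}, \eqref{eq.eta2}) together with a \emph{re-use of the already proved} \eqref{eq.lem3.1} to absorb the term $\int_{t_{j-1}}^{\tau}(\tau-v)^{\alpha-1}(v-r)^{2H-2}\,\mathrm{d}v$; the case $\eta>t_{j-1}$ is reduced to $\eta=t_{j-1}$ plus a tail term. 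Your kernel-first route (sharp three-regime bound on $K$: constant for $\alpha>2-2H$, $(\eta-v)^{\alpha+2H-2}$ for $\alpha<2-2H$, logarithmic at $\alpha=2-2H$, then one-dimensional integration over the window or against the difference) is a legitimate alternative: it concentrates the trichotomy behind $\mathcal{R}_{H,\alpha}(h)$ in a single place, whereas the paper's route avoids proving two-sided kernel bounds and shortens \eqref{eq.lem3.2} by recycling \eqref{eq.lem3.1}.

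The concrete misstep is your appeal to Lemma \ref{eq.ijts} in the proof of \eqref{eq.lem3.2}. That lemma requires $u\leq t_{k-1}<s\leq t_k$ with $k<l$, i.e.\ the reference point must lie at least one full grid cell to the \emph{left} of the $v$-integration range, and its bound carries the factor $(\tau-s)^{2H-2}$, which is tailored to Proposition \ref{lem.covGt} rather than to a bound by $\mathcal{R}_{H,\alpha}(h)$ alone. Consequently it says nothing about the configuration in which $r$ lies in the cell adjacent to $t_{j-1}$, or to the right of $t_{j-1}$ (which occurs since $\eta$ ranges over all of $[0,T]$) --- and this is precisely the near-diagonal corner that you correctly identify as the crux. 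That corner must instead be handled directly, e.g.\ by bounding the difference by $(t_{j-1}-v)^{\alpha-1}$ and scaling the resulting double integral over an $O(h)\times O(h)$ box (total homogeneity $2\alpha+2H-4$ plus two integrations gives $h^{2(\alpha+H-1)}$), or, as in the paper, via Lemma \ref{lem.midNext} combined with \eqref{eq.lem3.1}; your own kernel bounds plus Lemma \ref{lem.midNext} (and a mirrored variant for the piece $\int_{\eta}^{t_{j-1}}(v-\eta)^{\alpha+2H-2}\bigl((t_{j-1}-v)^{\alpha-1}-(\tau-v)^{\alpha-1}\bigr)\mathrm{d}v$ when $\eta<t_{j-1}$) also suffice. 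Two minor slips: the pointwise bound $Ch\,(t_{j-1}-v)^{\alpha-2}$ is the mean value theorem, not \eqref{chi} (which is its integrated form); and your consistency check is reversed --- $h^{\alpha}\leq h^{2(H+\alpha-1)}$ holds exactly when $\alpha\leq 2-2H$, not $\alpha\geq 2-2H$, although the conclusion that every contribution is absorbed into $\mathcal{R}_{H,\alpha}(h)$ remains valid since $\mathcal{R}_{H,\alpha}(h)$ is, up to the logarithm, the maximum of $h^{\alpha}$ and $h^{2(H+\alpha-1)}$.
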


\begin{proof}
We will first prove \eqref{eq.lem3.1} with $1 < j \leq N$. Specifically, it is divided into the following four cases.

Case 1:\ $\eta \in [0, t_1]$. The estimate \eqref{eq.lem3.1} can be obtained by
\begin{align*}
&\quad\ \int_0^{\eta} \int_{t_{j-1}}^{\tau} (\eta-r)^{\alpha-1} (\tau-v)^{\alpha - 1} |v-r|^{2H-2} \mathrm{d} v \mathrm{d} r \\
&\leq \int_0^{\eta} \int_{r}^{\tau} (\eta-r)^{\alpha-1} (\tau-v)^{\alpha - 1} (v-r)^{2H-2} \mathrm{d} v \mathrm{d} r \\
&\leq C \int_0^{\eta} (\eta-r)^{\alpha-1} (\tau-r)^{\alpha+2H-2} \mathrm{d} r \leq C h^{\alpha \wedge 2(\alpha+H-1)}.
\end{align*}

Case 2:\ $\eta \in (t_1,t_{j-1}]$. In this case, we have
\begin{align*} 
&\quad\ \int_0^{\eta-h} \int_{t_{j-1}}^{\tau} (\eta-r)^{\alpha-1} (\tau-v)^{\alpha - 1} |v-r|^{2H-2} \mathrm{d} v \mathrm{d} r \\
&\leq \int_0^{\eta-h} \int_{t_{j-1}}^{\tau} (\eta-r)^{\alpha+2H-3} (\tau-v)^{\alpha - 1} \mathrm{d} v \mathrm{d} r \leq C\mathcal{R}_{H, \alpha} (h).
\end{align*}
Besides, in virtue of $\tau> t_{j-1}\ge\eta$,
\begin{align*}
&\quad\ \int_{\eta-h}^{\eta} \int_{t_{j-1}}^{\tau} (\eta-r)^{\alpha-1} (\tau-v)^{\alpha - 1} |v-r|^{2H-2} \mathrm{d} v \mathrm{d} r \\
&\leq C \int_{\eta-h}^{\eta} (\eta-r)^{\alpha-1} (\tau-r)^{\alpha+2H-2} \mathrm{d} r \leq C h^{\alpha \wedge 2(\alpha+H-1)}.
\end{align*}
Thus, the estimate \eqref{eq.lem3.1} holds for Case 2.

Case 3:\ $\eta \in (t_{j-1}, \tau]$.
In view of the result of Case 2, the left hand side of \eqref{eq.lem3.1} can be controlled by
\begin{align*}
&C\mathcal{R}_{H, \alpha} (h)
 +\int_{t_{j-1}}^{\eta} \int_{t_{j-1}}^{\tau} (\eta-r)^{\alpha-1} (\tau-v)^{\alpha - 1} |r-v|^{2H-2} \mathrm{d} v \mathrm{d} r.
\end{align*}
The second term can be bounded as
\begin{align} \label{eq.3intgrand}
& \int_{t_{j-1}}^{\eta} \int_{t_{j-1}}^{\tau} (\eta-r)^{\alpha-1} (\tau-v)^{\alpha - 1} |r-v|^{2H-2} \mathrm{d} v \mathrm{d} r \notag\\
=& \int_{t_{j-1}}^{\eta} \int_{t_{j-1}}^{r} (\eta-r)^{\alpha-1} (\tau-v)^{\alpha - 1} (r-v)^{2H-2} \mathrm{d} v \mathrm{d} r \notag\\
&+ \int_{t_{j-1}}^{\eta} \int_{r}^{\tau} (\eta-r)^{\alpha-1} (\tau-v)^{\alpha - 1} (v-r)^{2H-2} \mathrm{d} v \mathrm{d} r \nonumber\\
\leq& \int_{t_{j-1}}^{\eta} \int_{v}^{\eta} (\tau-v)^{\alpha - 1} (\eta-r)^{\alpha-1} (r-v)^{2H-2} \mathrm{d} r \mathrm{d} v \notag\\
&+ C \int_{t_{j-1}}^{\eta} (\eta-r)^{\alpha-1} (\tau-r)^{\alpha +2H-2} \mathrm{d} r \leq C h^{2(\alpha+H-1)}.
\end{align}
Hence, the estimate \eqref{eq.lem3.1} holds for Case 3.

Case 4:\ $\eta \in (\tau,T]$. Utilizing the result of Case 3, the estimate \eqref{eq.lem3.1} can be attained by
\begin{align*}
& \int_0^{\eta} \int_{t_{j-1}}^{\tau} (\eta-r)^{\alpha-1} (\tau-v)^{\alpha - 1} |v-r|^{2H-2} \mathrm{d} v \mathrm{d} r \\
=& \int_0^{\tau} \int_{t_{j-1}}^{\tau} (\eta-r)^{\alpha-1} (\tau-v)^{\alpha - 1} |v-r|^{2H-2} \mathrm{d} v \mathrm{d} r \\
&+ \int_{\tau}^{\eta} \int_{t_{j-1}}^{\tau} (\eta-r)^{\alpha-1} (\tau-v)^{\alpha - 1} (r-v)^{2H-2} \mathrm{d} v \mathrm{d} r \\
\leq& ~C\mathcal{R}_{H, \alpha} (h) + C \int_{t_{j-1}}^{\tau} (\eta-v)^{\alpha+2H-2} (\tau-v)^{\alpha - 1} \mathrm{d} v \\
\leq&~ C\mathcal{R}_{H, \alpha} (h) +C h^{\alpha \wedge 2(\alpha+H-1)} \leq C\mathcal{R}_{H, \alpha} (h).
\end{align*}

One can conclude that we have proven \eqref{eq.lem3.1} when $1 < j \leq N$. It remains to prove \eqref{eq.lem3.1} for the case $j = 1$. In this case, one can divide $\eta \in [0,T]$ into $\eta \in [0,\tau]$ and $\eta \in (\tau,T]$, which will fall into the above Cases 1 and 4, respectively. Thus, the proof of \eqref{eq.lem3.1} is completed.

Next, we turn to proving \eqref{eq.lem3.2} under the settings $\eta \in [0, t_{j-1}]$ and $\eta \in (t_{j-1},T]$, separately. When $\eta \in [0, t_{j-1}]$, it follows from Lemma \ref{lem.midNext} that
\begin{align} \label{eq.eta1}
 \int_0^{\eta} (\eta-r)^{\alpha+2H-2} \big( (t_{j-1}-r)^{\alpha - 1} - (\tau-r)^{\alpha - 1} \big) \mathrm{d} r
\leq C h^{\alpha \wedge 2(\alpha+H-1)}.
\end{align}
Similarly, for $\alpha < 2-2H$, Lemma \ref{lem.midNext} also implies that
\begin{align} \label{eq.eta2}
\int_0^{\eta} (\eta-r)^{\alpha-1} \big( (t_{j-1}-r)^{\alpha+2H-2} - (\tau-r)^{\alpha+2H-2} \big) \mathrm{d} r
\leq C h^{2(\alpha+H-1)},
\end{align}
and \eqref{eq.eta2} holds trivially for $\alpha \geq 2-2H$ since $t_{j-1} \leq \tau$. The left hand side of \eqref{eq.lem3.2} is equal to
\begin{align*}
\mathcal T_1 + \mathcal T_2 &:= \int_0^{\eta} \int_0^{r} (\eta-r)^{\alpha-1} \big( (t_{j-1}-v)^{\alpha - 1} - (\tau-v)^{\alpha - 1} \big) (r-v)^{2H-2} \mathrm{d} v \mathrm{d} r \\
&\quad\ + \int_0^{\eta} \int_r^{t_{j-1}} (\eta-r)^{\alpha-1} \big( (t_{j-1}-v)^{\alpha - 1} - (\tau-v)^{\alpha - 1} \big) (v-r)^{2H-2} \mathrm{d} v \mathrm{d} r.
\end{align*}
By \eqref{eq.Betafun} and \eqref{eq.eta1}, 
\begin{align*}
\mathcal T_1\leq C \int_0^{\eta} (\eta-v)^{\alpha+2H-2} \big( (t_{j-1}-v)^{\alpha - 1} - (\tau-v)^{\alpha - 1} \big) \mathrm{d} v \leq C h^{\alpha \wedge 2(\alpha+H-1)},
\end{align*}
and using a similar argument of \eqref{eq.Beta} gives
\begin{align*}
 \mathcal T_2 \leq&~ C \int_0^{\eta} (\eta-r)^{\alpha-1}\Big( (t_{j-1}-r)^{\alpha+2H-2} - (\tau-r)^{\alpha+2H-2} \\
 &\quad+\int_{t_{j-1}}^{\tau} (\tau-v)^{\alpha - 1} (v-r)^{2H-2} \mathrm{d} v \Big) \mathrm{d} r,
\end{align*}
which can be further bounded by $C\mathcal{R}_{H, \alpha} (h)$, in view of \eqref{eq.lem3.1} and \eqref{eq.eta2}. Thus, \eqref{eq.lem3.2} follows.

When $\eta \in (t_{j-1},T]$, the left hand side of \eqref{eq.lem3.2} can be bounded by
\begin{align*}
\mathcal T'_1 + \mathcal T'_2 &:= \int_0^{t_{j-1}} \int_0^{t_{j-1}} (t_{j-1}-r)^{\alpha-1} \big( (t_{j-1}-v)^{\alpha - 1} - (\tau-v)^{\alpha - 1} \big) |v-r|^{2H-2} \mathrm{d} v \mathrm{d} r \\
&\quad\ + \int_{t_{j-1}}^{\eta} \int_0^{t_{j-1}} (\eta-r)^{\alpha-1} \big( (t_{j-1}-v)^{\alpha - 1} - (\tau-v)^{\alpha - 1} \big) (r-v)^{2H-2} \mathrm{d} v \mathrm{d} r,
\end{align*}
in which $\mathcal T'_1 \leq C h^{\alpha \wedge 2(\alpha+H-1)}$ by the obtained result with $\eta = t_{j-1}$. Besides, 
\begin{align*}
\mathcal T'_2 &\leq C \int_0^{t_{j-1}} (\eta-v)^{\alpha+2H-2} \big( (t_{j-1}-v)^{\alpha - 1} - (\tau-v)^{\alpha - 1} \big) \mathrm{d} v \\
&\leq
\begin{cases}
Ch^{\alpha}, & \mbox{if } \alpha \in [2-2H,1),\\
Ch^{2(\alpha+H-1)}, & \mbox{if } \alpha \in (1-H,2-2H),\\
\end{cases}
\end{align*}
in which \eqref{chi} with $\theta=\alpha-1 \in (-1,0)$ is used to deal with the case $\alpha \in [2-2H,1)$, while the inequality
\begin{align*}
&\quad\ \int_0^{t_{j-1}} (\eta-v)^{\alpha+2H-2} \big( (t_{j-1}-v)^{\alpha - 1} - (\tau-v)^{\alpha - 1} \big) \mathrm{d} v\\
&\leq C \int_0^{t_{j-1}} (t_{j-1}-v)^{\alpha+2H-2} \big( (t_{j-1}-v)^{\alpha - 1} - (\tau-v)^{\alpha - 1} \big) \mathrm{d} v
\end{align*}
and \eqref{eq.eta1} are used to handle the case $\alpha \in (1-H,2-2H)$. Hence, \eqref{eq.lem3.2} holds for $\eta\in(t_{j-1},T]$. The proof is complete.
\end{proof}

\section{Proofs} \label{sec.Proof}

In this section, we provide the detailed proof of Theorem \ref{thm.mainEM}.

\subsection{Proof of Theorem \ref{thm.mainEM}}

We denote $\hat{s} := t_{j-1}$ for $s \in (t_{j-1}, t_j]$ with $j=1,2,\cdots,N$. By introducing
\begin{align*}
R_n := \sum_{j=1}^n \int_{t_{j-1}}^{t_j} (t_n-s)^{\alpha-1} \big( b(x(s)) - b(x(\hat{s})) \big) \mathrm{d}s, \quad n \in \{1, 2, \cdots, N \}
\end{align*}
and according to \eqref{eq.GLE} and \eqref{eq.EM}, the strong error of the Euler method satisfies
\begin{align*}
\| x_n - x(t_n) \|
&\leq \frac{1}{\Gamma(\alpha)} \Big\| \sum_{j=1}^n \int_{t_{j-1}}^{t_j} (t_n-s)^{\alpha-1} \big( b(x_{j-1}) - b(x(\hat{s})) \big)\mathrm{d}s \Big\| \\
&\quad\ + \frac{1}{\Gamma(\alpha)} \Big\| \sum_{j=1}^n \int_{t_{j-1}}^{t_j} (t_n-s)^{\alpha-1} \big( b(x(\hat{s})) - b(x(s)) \big)\mathrm{d}s \Big\| \\
&\leq C \sum_{j=1}^n \int_{t_{j-1}}^{t_j} (t_n-s)^{\alpha-1} \| x_{j-1} - x(\hat{s}) \| \mathrm{d}s + \frac{1}{\Gamma(\alpha)} \| R_n \|,
\end{align*}
for any $n \in \{1, 2, \cdots, N \}$, where the last step used the assumption $b \in C_b^1$. Then, applying the singular Gr\"{o}nwall inequality yields
\begin{align} \label{eq.xn-xtoRn}
\| x_n - x(t_n) \| \leq C \| R_n \|.
\end{align}

In order to estimate $\| R_n \|$, the GLE \eqref{eq.GLE} is reformulated as
\begin{align} \label{eq.splitGLE}
x(t) = \zeta(t) + G(t), \qquad \text{ where } \zeta(t):= x_0 + \frac{1}{\Gamma(\alpha)}\int_0^{t}(t-s)^{\alpha-1}b(x(s)) \mathrm{d}s.
\end{align}
The mean value theorem implies $b(x(s)) - b(x(\hat{s})) = \big(x(s) - x(\hat{s})\big) \int_0^1 b'\big( \xi_{s}^{\theta} \big) \mathrm d \theta$ with 
\begin{align} \label{eq.MeanThm}
\xi_{s}^{\theta} := x(\hat{s}) + \theta \big(x(s) - x(\hat{s})\big).
\end{align}
This along with \eqref{eq.splitGLE} gives
\begin{align}\label{eq.Rn}
\|R_n\| &\leq \Big\| \sum_{j=1}^{n} \int_{t_{j-1}}^{t_j} (t_n - s)^{\alpha-1} \big( \zeta(s) - \zeta(\hat{s}) \big) \int_0^1 b'\big( \xi_{s}^{\theta} \big) \mathrm d \theta \mathrm{d}s \Big\| \nonumber \\
&\quad + \Big\| \sum_{j=1}^{n} \int_{t_{j-1}}^{t_j} (t_n - s)^{\alpha-1} \big( G(s) - G(\hat{s}) \big) \int_0^1 b'\big( \xi_{s}^{\theta} \big) \mathrm d \theta \mathrm{d}s \Big\| =: \|R_{n, 1}\| + \|R_{n, 2}\|.
\end{align}
Since $b \in C_b^1$ and $\zeta$ is $\alpha$-H\"{o}lder continuous in $L^2(\Omega;\mathbb{R})$ (see \cite[Page 456]{FangLi2020}),\begin{align}
\|R_{n, 1}\|
&\leq C\sum_{j=1}^{n} \int_{t_{j-1}}^{t_j} (t_n - s)^{\alpha-1} \big\| \zeta(s) - \zeta(\hat{s}) \big\|\mathrm{d}s \nonumber \\
&\leq Ch^\alpha\sum_{j=1}^{n} \int_{t_{j-1}}^{t_j} (t_n - s)^{\alpha-1} \mathrm{d}s\leq Ch^\alpha. \label{Rn,1}
\end{align}
To estimate $\| R_{n, 2} \|$, we denote
\begin{align*}
I_{n, 2}^j := \int_0^1 \int_{t_{j-1}}^{t_j} (t_n - s)^{\alpha-1} \big( G(s) - G(\hat{s}) \big) b'\big( \xi_{s}^{\theta} \big) \mathrm{d}s \mathrm d \theta ,
\end{align*}
and make the decomposition
\begin{align*}
\|R_{n, 2}\|^2 &=\sum_{j=1}^{n} \|I_{n, 2}^j\|^2 + 2 \sum_{1\leq i <j \leq n} \langle I_{n, 2}^i, I_{n, 2}^j \rangle.
\end{align*}
For the first term of the right hand side, using the fact that $G(\cdot)$ is $(H+\alpha-1)$-H\"{o}lder continuous in $L^2(\Omega;\mathbb{R})$ (see \cite[Proposition 1]{LiLiu2017}) yields
\begin{align*}
&\quad\ \sum_{j=1}^{n} \|I_{n, 2}^j\|^2 \leq C \sum_{j=1}^{n} \left( \int_{t_{j-1}}^{t_j} (t_n - s)^{\alpha-1} \| G(s) - G(\hat{s}) \| \mathrm{d}s \right)^2 \\
&\leq C h^{2(H + \alpha - 1)} \sum_{j=1}^{n} \left( \int_{t_{j-1}}^{t_j} (t_n - s)^{\alpha-1} \mathrm{d}s \right)^2
\leq C h^{2(H + 2\alpha - 1)}\bigg(1+\sum_{j=1}^{n-1}(n-j)^{2(\alpha-1)}\bigg)\\
&\leq\begin{cases}
C h^{2H + 4\alpha - 2}, & \mbox{if } 1-H<\alpha<\frac{1}{2}, \\
C \big( |\ln h| \vee \ln T \big) h^{2H}, & \mbox{if } \alpha=\frac{1}{2}, \\
C h^{2H + 2\alpha -1}, & \mbox{if } \frac{1}{2}<\alpha<1,
\end{cases}
\end{align*}
which can be further bounded by $C\mathcal{R}_{H, \alpha}^2(h)$. Thus,
\begin{align}\label{eq.Rn2}
\|R_{n, 2}\|^2 \leq C \mathcal{R}_{H, \alpha}^2(h)+2 \sum_{1\leq i <j \leq n} \langle I_{n, 2}^i, I_{n, 2}^j \rangle.
\end{align}
Notice that $\langle I_{n, 2}^i, I_{n, 2}^j \rangle$ is equal to
\begin{align*}
\int_0^1 \int_0^1 \int_{t_{i-1}}^{t_i} \int_{t_{j-1}}^{t_j} (t_n - s)^{\alpha-1} (t_n - \tau)^{\alpha-1} \mathbb{E}\big[ b'\big( \xi_{s}^{\theta} \big) b'\big( \xi_{\tau}^{\lambda} \big) \big(G(s) - G(\hat{s})\big) \big(G(\tau) - G(\hat{\tau})\big) \big] \mathrm{d}\tau \mathrm{d}s \mathrm d \theta \mathrm d \lambda.
\end{align*}

Our main efforts for the proof of Theorem \ref{thm.mainEM} lies in the domination of the expectation in the integrand of the above formula, as the subsequent Proposition \ref{lem.covGt} shows. Recall that $G(\cdot)$ is defined by \eqref{eq.Gt}.

\begin{proposition} \label{lem.covGt}
For $ s \in (t_{i-1}, t_i]$, $\tau \in (t_{j-1}, t_j]$ with $1 \leq i < j \leq N$, and $\theta,\lambda\in(0,1)$, let $ \xi_{s}^{\theta}$, $\xi_{\tau}^{\lambda}$ be given by \eqref{eq.MeanThm}. Under the assumptions of Theorem \ref{thm.mainEM},
\begin{align*}
\mathbb{E}\Big[ b'\big( \xi_{s}^{\theta} \big) b'\big( \xi_{\tau}^{\lambda} \big) \big(G(s) - G(\hat{s})\big) \big(G(\tau) - G(\hat{\tau})\big) \Big]
\leq C \mathcal{R}_{H, \alpha}^2(h) (\tau-s)^{2H-2},
\end{align*}
where the constant $C=C(\alpha,H,T,\sigma)$ is independent of $\theta,\lambda,s,\tau,i,j$.
\end{proposition}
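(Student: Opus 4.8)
The plan is to remove the two stochastic integrals $G(s)-G(\hat s)$ and $G(\tau)-G(\hat\tau)$ by the Malliavin integration-by-parts (duality) formula, trading them for deterministic pairings against the Malliavin derivatives of $F:=b'(\xi_s^\theta)b'(\xi_\tau^\lambda)$. Since the kernels are deterministic, I would first write $G(s)-G(\hat s)=W_H(g_s)$ and $G(\tau)-G(\hat\tau)=W_H(g_\tau)$, where
\[
g_s(\cdot) := \frac{\sigma}{\Gamma(\alpha)} \big[ (s-\cdot)^{\alpha-1}\mathbf 1_{[0,s)}(\cdot) - (\hat s-\cdot)^{\alpha-1}\mathbf 1_{[0,\hat s)}(\cdot) \big]
\]
and $g_\tau$ is defined analogously. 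Because $x(t)\in\mathbb D^{2,\infty}$ by Theorem \ref{thm.Malliavin} and $b\in C_p^3$, one has $F\in\mathbb D^{2,\infty}$, so the dual formula $\mathbb E[\langle\varphi,DF\rangle_{\mathcal H}]=\mathbb E[F\delta(\varphi)]$ may be applied twice (first peeling off $\delta(g_\tau)$, then $\delta(g_s)$, using $D(W_H(g_s))=g_s$), yielding the identity
\[
\mathbb E\big[ F\, W_H(g_s)\, W_H(g_\tau) \big] = \langle g_s, g_\tau \rangle_{\mathcal H}\, \mathbb E[F] + \mathbb E\big[ \langle D^2 F,\, g_s\otimes g_\tau \rangle_{\mathcal H^{\otimes 2}} \big].
\]
I would then estimate the covariance term and the Malliavin term separately.

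For the covariance term, $|\mathbb E[F]|\le \|b'\|_\infty^2\le C$ since $b\in C_b^1$, so it remains to prove the deterministic bound $|\langle g_s,g_\tau\rangle_{\mathcal H}|\le C\mathcal R_{H,\alpha}^2(h)(\tau-s)^{2H-2}$. Writing this inner product as a double integral against $|r-v|^{2H-2}$ and decomposing the domain according to whether each kernel sits on its ``plain'' part ($[\hat s,s)$, resp.\ $[\hat\tau,\tau)$) or its ``difference'' part ($[0,\hat s)$, resp.\ $[0,\hat\tau)$), each of the four resulting pieces is a single/double singular integral controlled by Lemma \ref{eq.ijts} together with \eqref{eq.lem5.1-2}--\eqref{eq.lem5.1-1}; these estimates are precisely what produces the genuine decay factor $(\tau-s)^{2H-2}$. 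This piece is essentially the linear-case computation.

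For the Malliavin term I would expand $D^2F$ by the Leibniz and chain rules (which is where $b\in C_p^3$ is used): with $\Phi:=b'(\xi_s^\theta)$ and $\Psi:=b'(\xi_\tau^\lambda)$,
\[
D^2 F = \Psi\, D^2\Phi + \Phi\, D^2\Psi + D\Phi\otimes D\Psi + D\Psi\otimes D\Phi,
\]
where $D\Phi=b''(\xi_s^\theta)D\xi_s^\theta$, $D^2\Phi=b'''(\xi_s^\theta)\,D\xi_s^\theta\otimes D\xi_s^\theta+b''(\xi_s^\theta)\,D^2\xi_s^\theta$, and $D\xi_s^\theta=(1-\theta)Dx(\hat s)+\theta Dx(s)$ (similarly for $\Psi$ and for the second derivatives). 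Contracting the first $\mathcal H$-slot against $g_s$ and the second against $g_\tau$, the crucial building block is that, by the first-order bound \eqref{Drxt}, every first-order pairing obeys
\[
|\langle Dx(t), g_s\rangle_{\mathcal H}| \le C\,\mathcal R_{H,\alpha}(h) \quad\text{and}\quad |\langle Dx(t), g_\tau\rangle_{\mathcal H}| \le C\,\mathcal R_{H,\alpha}(h),
\]
which follow from Lemma \ref{lem.impLemUse} (difference part via \eqref{eq.lem3.2}, plain part via \eqref{eq.lem3.1}) upon choosing $\eta=t$. Hence the tensor terms factor as $b''(\xi_s^\theta)b''(\xi_\tau^\lambda)\langle D\xi_s^\theta,g_s\rangle\langle D\xi_\tau^\lambda,g_\tau\rangle$, which is $O(\mathcal R_{H,\alpha}^2(h))$ in $L^1$ after Hölder's inequality (the polynomial-growth factors being absorbed by the uniform moment bound \eqref{eq.XtLq} of $x$ and of its Malliavin derivatives). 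The $b'''$-piece of $\Psi D^2\Phi$ factors likewise; the $b''$-piece requires inserting the second-order bound \eqref{Drxt-2}, after which a Fubini reorganization recasts it as
\[
\int_0^s (s-w)^{\alpha-1}\Big( \int\!\!\int (w-r_1)^{\alpha-1}|g_s(a)|\,|r_1-a|^{2H-2}\,\mathrm dr_1\,\mathrm da \Big)\Big( \int\!\!\int (w-r_2)^{\alpha-1}|g_\tau(b)|\,|r_2-b|^{2H-2}\,\mathrm dr_2\,\mathrm db \Big)\mathrm dw,
\]
and each bracket is again $\le C\mathcal R_{H,\alpha}(h)$ by Lemma \ref{lem.impLemUse}, so the term is $\le C\mathcal R_{H,\alpha}^2(h)$. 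Since $(\tau-s)^{2H-2}\ge T^{2H-2}$, every such $O(\mathcal R_{H,\alpha}^2(h))$ bound is absorbed into $C\mathcal R_{H,\alpha}^2(h)(\tau-s)^{2H-2}$, closing the Malliavin term.

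The obstacle I expect to be decisive is twofold. First, the covariance estimate must retain the \emph{genuine} singular decay $(\tau-s)^{2H-2}$ uniformly over all $1\le i<j\le N$, including the near-diagonal case $j=i+1$ where $\tau-s$ can be of order $h$; this is exactly why the delicate single-variable splittings of Lemma \ref{eq.ijts} and \eqref{eq.lem5.1-2}--\eqref{eq.lem5.1-1} are needed rather than a crude Cauchy--Schwarz (which would lose the decay and only give $\mathcal R_{H,\alpha}(h)$, too weak for the subsequent summation over $i<j$). Second, the $b''$-pieces of the Malliavin term become genuinely multiple (five-fold) singular integrals in which the nested bound \eqref{Drxt-2} sits inside the $\mathcal H$-pairings; the key technical step is the Fubini decoupling of the $s$-kernel from the $\tau$-kernel so that Lemma \ref{lem.impLemUse} applies factor-wise. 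Throughout, the polynomial growth of $b''$ and $b'''$ must be handled by Hölder's inequality together with the uniform moments of $x$ and its Malliavin derivatives, so that all constants stay independent of $h,\theta,\lambda,s,\tau,i,j$.
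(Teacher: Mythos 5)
Your proposal is correct and follows essentially the same route as the paper's own proof: the same Malliavin duality identity splitting the expectation into the covariance term $\langle g_s,g_\tau\rangle_{\mathcal H}\,\mathbb E[F]$ plus a second-Malliavin-derivative term, the same use of Lemma \ref{eq.ijts} and \eqref{eq.lem5.1-2}--\eqref{eq.lem5.1-1} to extract the genuine decay $(\tau-s)^{2H-2}$ from the covariance piece (the paper's Lemma \ref{propJ}), and the same use of Theorem \ref{thm.Malliavin} with Lemma \ref{lem.impLemUse} to bound the first- and second-derivative pairings (the paper's terms $\mathcal K$ and $\mathcal L$) by $C\mathcal R^2_{H,\alpha}(h)$, absorbing $(\tau-s)^{2H-2}\ge T^{2H-2}$ trivially. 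The only difference is cosmetic: the paper proves the slightly sharper bound $Ch^{2\alpha}(\tau-s)^{2H-2}$ for the covariance term, which implies the $C\mathcal R^2_{H,\alpha}(h)(\tau-s)^{2H-2}$ you claim.
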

\noindent The proof of Proposition \ref{lem.covGt} is deferred to Subsection \ref{sec.PropositioncovGt} without
interrupting the flow of the proof of Theorem \ref{thm.mainEM}.

With Proposition \ref{lem.covGt} in mind, one can derive
\begin{align}\label{In,2}\notag
 \sum_{1\leq i <j \leq n} \langle I_{n, 2}^i, I_{n, 2}^j \rangle\notag
&\leq C \mathcal{R}_{H, \alpha}^2(h) \int_0^{t_{n-1}} \int_s^{t_n} (t_n -s)^{\alpha - 1} (t_n -\tau)^{\alpha - 1} (\tau - s)^{2H-2} \mathrm{d} \tau \mathrm{d} s \\\notag
&= C \mathcal{R}_{H, \alpha}^2(h) \int_h^{t_n} \int_0^u u^{\alpha-1} v^{\alpha-1} (u-v)^{2H-2} \mathrm{d}v \mathrm{d}u \\
&\leq C \mathcal{R}_{H, \alpha}^2(h) \int_h^{t_n} u^{2\alpha + 2H -3}\mathrm{d}u \leq C \mathcal{R}_{H, \alpha}^2(h),
\end{align}
where the last step used the fact $2\alpha + 2H -3 > -1$ since $\alpha \in (1-H,1)$. Substituting \eqref{In,2} into \eqref{eq.Rn2} gives $\|R_{n,2}\| \leq C \mathcal{R}_{H, \alpha} (h)$, which along with \eqref{eq.Rn} and \eqref{Rn,1} reveals
\begin{align*}
\|R_n\| \leq C\mathcal{R}_{H, \alpha} (h).
\end{align*}
Finally, recalling the estimate \eqref{eq.xn-xtoRn} completes the proof of Theorem \ref{thm.mainEM}.
\qed

\begin{remark}
We would like to mention that by the same strategy as in the proof of \cite[Page 5]{DaiXiao2021}, the estimate \eqref{Rn,1} for $\|R_{n, 1}\|$ could be improved to be
\begin{align*}
\|R_{n, 1}\|^2 \leq
\begin{cases}
C h^{(2H+4\alpha-2) \wedge (3\alpha)}, & \mbox{if } 1-H < \alpha < 1/2, \\
C \max\big\{ h^{2H}, |\ln h| h^{3/2} \big\}, & \mbox{if } \alpha = 1/2, \\
C h^{(2H+4\alpha-2) \wedge (\alpha+1)}, & \mbox{if } 1/2 < \alpha < 1-H/2, \\
C h^{\alpha+1}, & \mbox{if } 1-H/2 \leq \alpha < 1,
\end{cases}
\end{align*}
which means that the error terms $\|R_{n, 1}\|^2$ and $\sum_{j=1}^{n} \|I_{n, 2}^j\|^2$ share the same rates with those of the linear case. Hence, the rest error term $2 \sum_{1\leq i <j \leq n} \langle I_{n, 2}^i, I_{n, 2}^j \rangle$ dominates the error of the Euler method \eqref{eq.EM} for the overdamped GLE \eqref{eq.GLE} in the nonlinear case.
\end{remark}

\subsection{Proof of Proposition \ref{lem.covGt}} \label{sec.PropositioncovGt}

For all $0 < s < t \leq T$, the increment of the diffusion term can be rewritten as
\begin{align*}
G(t) - G(s) &= \frac{\sigma}{\Gamma(\alpha)} \Big( \int_0^s (t-u)^{\alpha - 1} - (s-u)^{\alpha - 1} \mathrm{d}W_H(u) + \int_s^t (t-u)^{\alpha-1} \mathrm{d}W_H(u) \Big) \\
&= \frac{\sigma}{\Gamma(\alpha)} \delta \Big( ((t-\cdot)^{\alpha - 1} - (s-\cdot)^{\alpha - 1}) \mathbf 1_{(0,s)}(\cdot) + (t-\cdot)^{\alpha - 1} \mathbf 1_{(s,t)}(\cdot) \Big),
\end{align*}
where $\delta$ is the Skorohod integral introduced in Subsection \ref{sec.MalliavinfBm}. For the simplicity of notations, for $s \in (0,T]$, we denote 
\begin{equation*} 
\begin{split}
& \widehat{A} (\cdot;s) := \big( (\hat{s} - \cdot)^{\alpha - 1} - (s-\cdot)^{\alpha - 1} \big) \mathbf 1_{(0,\hat{s})}(\cdot), \quad \widetilde{A} (\cdot;s) : = (s-\cdot)^{\alpha - 1} \mathbf 1_{(\hat{s},s)}(\cdot), \\
& A (\cdot;s): = \widehat{A} (\cdot;s) + \widetilde{A} (\cdot;s), \qquad\qquad\qquad\quad\quad\ \, U (\cdot;s) := -\widehat{A} (\cdot;s) + \widetilde{A} (\cdot;s).
\end{split}
\end{equation*}
It follows from the dual formula \cite[Eq.\ (25)]{KloedenNeuenkirch2011} that
\begin{align}\label {eq.main2term} 
&\quad\ \mathbb{E}\Big[ b'( \xi_{s}^{\theta} ) b'( \xi_{\tau}^{\lambda} ) \big(G(s) - G(\hat{s})\big) \big(G(\tau) - G(\hat{\tau})\big) \Big] = \frac{\sigma^2}{\Gamma^2(\alpha)} \mathbb{E}\Big[ b'( \xi_{s}^{\theta} ) b'( \xi_{\tau}^{\lambda} ) \delta\big( U (\cdot;s) \big) \delta\big( U (\cdot;\tau) \big) \Big] \notag\\
&= \frac{\sigma^2}{\Gamma^2(\alpha)} \mathbb{E}\big[ b'( \xi_{s}^{\theta} ) b'( \xi_{\tau}^{\lambda} ) \langle U (\cdot;s), U (\cdot;\tau) \rangle_{\mathcal{H}} \big] + \frac{\sigma^2}{\Gamma^2(\alpha)} \mathbb{E}\big[ \langle D \langle D \big(b'( \xi_{s}^{\theta} ) b'( \xi_{\tau}^{\lambda} )\big), U (\cdot;s) \rangle_{\mathcal{H}}, U (\cdot;\tau) \rangle_{\mathcal{H}} \big] \notag \\
&\leq C \int_{[0,T]^2} \| b'( \xi_{s}^{\theta} ) b'( \xi_{\tau}^{\lambda} ) \|_1 A(u;s) A(v;\tau) |u-v|^{2H-2} \mathrm{d} u \mathrm{d} v \notag \\
&\quad + C \int_{[0,T]^4} \| D_{r_2} D_{r_1} \big( b'( \xi_{s}^{\theta} ) b'( \xi_{\tau}^{\lambda} ) \big) \|_1 A(u;s) A(v;\tau) |r_1 - u|^{2H-2} |r_2 - v|^{2H-2} \mathrm{d} r_1 \mathrm{d} u \mathrm{d} r_2 \mathrm{d}v. 
\end{align}
Applying the product rule and chain rule of the Malliavin derivative obtains
\begin{align}\label{eq.DF}
D_{r_1} \big( b'( \xi_{s}^{\theta} ) b'( \xi_{\tau}^{\lambda} ) \big) = b''(\xi_{s}^{\theta}) D_{r_1} \xi_{s}^{\theta} b'( \xi_{\tau}^{\lambda} ) + b'( \xi_{s}^{\theta} ) b''(\xi_{\tau}^{\lambda}) D_{r_1} \xi_{\tau}^{\lambda},
\end{align}
and
\begin{align}
D_{r_2} D_{r_1} \big( b'( \xi_{s}^{\theta} ) b'( \xi_{\tau}^{\lambda} ) \big)
&= b'''(\xi_{s}^{\theta}) D_{r_2} \xi_{s}^{\theta} D_{r_1} \xi_{s}^{\theta} b'(\xi_{\tau}^{\lambda}) + b''(\xi_{s}^{\theta}) D_{r_2}D_{r_1} \xi_{s}^{\theta} b'(\xi_{\tau}^{\lambda}) \notag \\
&\quad + b''(\xi_{s}^{\theta}) D_{r_1} \xi_{s}^{\theta} b''(\xi_{\tau}^{\lambda}) D_{r_2} \xi_{\tau}^{\lambda} + b''(\xi_{s}^{\theta}) D_{r_2} \xi_{s}^{\theta} b''(\xi_{\tau}^{\lambda}) D_{r_1} \xi_{\tau}^{\lambda}\notag\\
&\quad + b'( \xi_{s}^{\theta}) b'''(\xi_{\tau}^{\lambda}) D_{r_2} \xi_{\tau}^{\lambda} D_{r_1} \xi_{\tau}^{\lambda} + b'( \xi_{s}^{\theta}) b''(\xi_{\tau}^{\lambda}) D_{r_2} D_{r_1} \xi_{\tau}^{\lambda}. \label{eq.D2F}
\end{align}

Invoking \eqref{eq.DF} and \eqref{eq.D2F}, it follows from $b \in C_{b,p}^{1,3}$, H\"older's inequality and \eqref{eq.XtLq} that
\begin{align*}
\| D_{r_2} D_{r_1} \big( b'( \xi_{s}^{\theta} ) b'( \xi_{\tau}^{\lambda} ) \big) \|_1 &\leq C \big( \|D_{r_2}D_{r_1} \xi_{s}^{\theta}\| + \| D_{r_2} D_{r_1} \xi_{\tau}^{\lambda}\| \big) \\
&\quad + C \big( \|D_{r_1} \xi_{s}^{\theta}\|_4 + \|D_{r_1} \xi_{\tau}^{\lambda}\|_4 \big) \big( \|D_{r_2} \xi_{s}^{\theta}\|_4 + \|D_{r_2} \xi_{\tau}^{\lambda}\|_4 \big).
\end{align*}
Note that $\| b'( \xi_{s}^{\theta} ) b'( \xi_{\tau}^{\lambda} ) \|_1$ is bounded since $b \in C_b^1$. Thus, by \eqref {eq.main2term},
\begin{align}\label{eq.JKL}
\mathbb{E}\Big[ b'( \xi_{s}^{\theta} ) b'( \xi_{\tau}^{\lambda} ) \big(G(s) - G(\hat{s})\big) \big(G(\tau) - G(\hat{\tau})\big) \Big]
\leq C \big\{ \mathcal{J} + \mathcal{K} + \mathcal{L} \big\},
\end{align}
where
\begin{align}\label{eq.J}
\mathcal{J} &:=\int_{[0,T]^2} A(u;s) A(v;\tau) |u-v|^{2H-2} \mathrm{d} u \mathrm{d} v, \\\label{eq.K}\notag
\mathcal{K} &:= \int_{[0,T]^2} \big( \|D_{r_1} \xi_{s}^{\theta}\|_4 + \|D_{r_1} \xi_{\tau}^{\lambda}\|_4 \big) A(u;s) |r_1 - u|^{2H-2} \mathrm{d} r_1 \mathrm{d} u \\
&\quad\ \times \int_{[0,T]^2} \big( \|D_{r_2} \xi_{s}^{\theta}\|_4 + \|D_{r_2} \xi_{\tau}^{\lambda}\|_4 \big) A(v;\tau) |r_2 - v|^{2H-2} \mathrm{d} r_2 \mathrm{d}v, \\\label{eq.L}\notag
\mathcal{L} &:= \int_{[0,T]^4} \big( \|D_{r_2}D_{r_1} \xi_{s}^{\theta}\| + \|D_{r_2}D_{r_1} \xi_{\tau}^{\lambda}\| \big) \\
&\qquad\qquad\ \ \times A(u;s) A(v;\tau) |r_1 - u|^{2H-2} |r_2 - v|^{2H-2} \mathrm{d} r_1 \mathrm{d} u \mathrm{d} r_2 \mathrm{d}v.
\end{align}
Finally, the proof of Proposition \ref{lem.covGt} is completed by combining Lemmas \ref{propJ} and \ref{propKL} with \eqref{eq.JKL}.
\qed

The rest of the section is devoted to proving Lemmas \ref{propJ} and \ref{propKL}.

\begin{lemma}\label{propJ}
For $\mathcal J$ given by \eqref{eq.J}, there exists some constant $C$ such that
\begin{align*}
 \mathcal{J}\leq Ch^{2\alpha}(\tau-s)^{2H-2},
\end{align*}
for any $s\in(t_{i-1},t_i]$ and $\tau\in(t_{j-1},t_j]$ with $1\leq i<j\leq N$.
\end{lemma}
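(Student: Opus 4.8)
The plan is to exploit the decomposition $A(\cdot;s)=\widehat A(\cdot;s)+\widetilde A(\cdot;s)\ge 0$ and to split $\mathcal J$ according to the support of the $\tau$-factor, writing $\mathcal J=\mathcal J_{\mathrm{far}}+\mathcal J_{\mathrm{near}}$, where $\mathcal J_{\mathrm{far}}$ collects the contribution of $\widetilde A(\cdot;\tau)$ (supported on $(t_{j-1},\tau)$) and $\mathcal J_{\mathrm{near}}$ that of $\widehat A(\cdot;\tau)$ (supported on $(0,t_{j-1})$). The structural fact driving everything is that $i<j$ forces $s\le t_{j-1}<\tau$, so the entire $u$-support $(0,s)$ of $A(\cdot;s)$ lies to the left of $t_{j-1}$: in $\mathcal J_{\mathrm{far}}$ the kernel $|u-v|^{2H-2}$ is nonsingular, whereas in $\mathcal J_{\mathrm{near}}$ the two supports overlap and the kernel is genuinely singular on the diagonal.

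For $\mathcal J_{\mathrm{far}}$ I would use $u\le s<v$ to estimate $|u-v|^{2H-2}=(v-u)^{2H-2}\le(v-s)^{2H-2}$, factor out $\int_0^s A(u;s)\,\mathrm du\le Ch^{\alpha}$ (the $\widehat A$ part via \eqref{chi} with $\theta=\alpha-1$, the $\widetilde A$ part directly), and so reduce matters to the one-dimensional integral $\int_{t_{j-1}}^{\tau}(\tau-v)^{\alpha-1}(v-s)^{2H-2}\,\mathrm dv$. I would bound this by $Ch^{\alpha}(\tau-s)^{2H-2}$ through a case split: if $t_{j-1}-s\ge\tau-t_{j-1}$ then $v-s\ge\tfrac12(\tau-s)$ on the whole range and the $\alpha$-kernel integrates to $h^{\alpha}$; if $t_{j-1}-s<\tau-t_{j-1}$ then $\tau-s\le2(\tau-t_{j-1})\le 2h$ and $(v-s)^{2H-2}\le(v-t_{j-1})^{2H-2}$ turns the integral into a Beta function of size $(\tau-t_{j-1})^{\alpha+2H-2}\le Ch^{\alpha}(\tau-s)^{2H-2}$. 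Altogether $\mathcal J_{\mathrm{far}}\le Ch^{2\alpha}(\tau-s)^{2H-2}$.

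The real work is $\mathcal J_{\mathrm{near}}$, where I would produce the factor $(\tau-s)^{2H-2}$ by two distinct mechanisms according to the location of $v$. Where $v$ is bounded away from $t_{j-1}$, I would apply the mean value theorem to $\widehat A(v;\tau)=(t_{j-1}-v)^{\alpha-1}-(\tau-v)^{\alpha-1}\le C(\tau-t_{j-1})(t_{j-1}-v)^{\alpha-2}\le Ch\,(t_{j-1}-v)^{\alpha-2H}(t_{j-1}-v)^{2H-2}$, and on the range $t_{j-1}-v\gtrsim\tau-s$ bound $(t_{j-1}-v)^{2H-2}\le C(\tau-s)^{2H-2}$; this gains a power of $h$, extracts $(\tau-s)^{2H-2}$, and replaces the $\tau$-difference by the clean weight $(t_{j-1}-v)^{\alpha-2H}$. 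Crucially retaining the $s$-cancellation inside $\widehat A(\cdot;s)$, the leftover doubly-singular integrals over $v\in(0,t_{i-1})$ are then precisely of the form controlled by \eqref{eq.lem5.1-2} (when $u\in(t_{i-1},s)$) and \eqref{eq.lem5.1-1} (when $u\in(0,t_{i-1})$, $u>v$), each $\le Ch^{2\alpha-1}$, giving $Ch\cdot h^{2\alpha-1}(\tau-s)^{2H-2}$. On the intermediate range $u\le t_{i-1}$, $v\in(t_{i-1},t_{j-1})$ the variables are well separated with $u$ to the left of the $v$-interval, so I would instead invoke Lemma \ref{eq.ijts} (with $k=i$, $l=j$) directly and integrate out $\widehat A(\cdot;s)$ at cost $h^{\alpha}$.

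The main obstacle is the single remaining region $u\in(t_{i-1},s)$, $v\in(t_{i-1},t_{j-1})$, where $u$ is no longer to the left of the $v$-interval (so Lemma \ref{eq.ijts} is unavailable) and where both the diagonal singularity and the endpoint $v\to t_{j-1}$ bite at once: there the mean value bound $(t_{j-1}-v)^{\alpha-2}$ ceases to be integrable and the $\tau$-difference can no longer supply $(\tau-s)^{2H-2}$. The resolution is to subdivide this region by distance to $t_{j-1}$ (e.g.\ at $v=t_i$): where $v$ stays far from $t_{j-1}$ the mean value mechanism above still applies, while where $v$ is within $O(h)$ of $t_{j-1}$ one switches mechanisms and reads off $(\tau-s)^{2H-2}$ from the kernel itself, using that the separation $v-u$ is then comparable to $\tau-s$ (valid since $u\le s$) and bounding $\widehat A(v;\tau)\le(t_{j-1}-v)^{\alpha-1}$, which is now integrable; in the adjacent case $j=i+1$ one has $\tau-s\le 2h$, so $(\tau-s)^{2H-2}\gtrsim h^{2H-2}$ and a crude unconditional bound $h^{2\alpha+2H-2}$ suffices. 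Orchestrating these two extraction mechanisms consistently, and retaining just enough of the $s$-cancellation so that every leftover singular integral matches the convergent bounds \eqref{eq.lem5.1-2}/\eqref{eq.lem5.1-1} rather than a divergent one, is the delicate bookkeeping on which the estimate hinges.
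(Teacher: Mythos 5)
Most of your plan is sound and, after regrouping, coincides with the paper's four-way split $\mathcal J_1+\mathcal J_2+\mathcal J_3+\mathcal J_4$: your far part handles the paper's $\mathcal J_2$ and $\mathcal J_4$ in a clean unified way, and your pieces treated via \eqref{eq.lem5.1-2}, \eqref{eq.lem5.1-1} and Lemma \ref{eq.ijts} with $k=i$, $l=j$ are exactly the paper's $\mathcal J_{3,1}$, part of $\mathcal J_{11}$, and $\mathcal J_{12}$. However, your treatment of the region $u\in(t_{i-1},s)$, $v\in(t_i,t_{j-1})$ breaks down when $j\ge i+3$. There the comparability claim $v-u\gtrsim\tau-s$ is false near the left endpoint (take $u$ near $s\le t_i$ and $v=t_i+\varepsilon h$, while $\tau-s\ge t_{j-1}-t_i\ge 2h$), and the crude bound $\widehat A(v;\tau)\le(t_{j-1}-v)^{\alpha-1}$ integrates over $(t_i,t_{j-1})$ to $((j-i-1)h)^{\alpha}/\alpha$, not $Ch^{\alpha}$, so it destroys the factor $h^{\alpha}$ you need; restricting that mechanism to $t_{j-1}-v\lesssim h$ does not help, because the mean-value mechanism requires $t_{j-1}-v\gtrsim\tau-s$ to surrender $(\tau-s)^{2H-2}$, leaving the band $h\ll t_{j-1}-v\ll\tau-s$ covered by neither. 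Moreover, your premise that Lemma \ref{eq.ijts} is unavailable on this region is what leads you astray: once you split at $v=t_i$, every $u\le s\le t_i$ \emph{does} lie to the left of the $v$-interval $(t_i,t_{j-1})$, and the paper invokes precisely Lemma \ref{eq.ijts} with $k=i+1$, $l=j$ there. Internally that lemma splits at $v=(\tau+s)/2$ and, in the piece near $t_{j-1}$, bounds $\int\big((t_{j-1}-v)^{\alpha-1}-(\tau-v)^{\alpha-1}\big)\mathrm dv\le Ch^{\alpha}$ by the cancellation \eqref{chi}, not by the one-sided bound $(t_{j-1}-v)^{\alpha-1}$; in other words, the cancellation in the $\tau$-factor must be retained in the kernel-extraction zone — the very principle you apply to $\widehat A(\cdot;s)$ but drop for $\widehat A(\cdot;\tau)$.

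There is a second, smaller omission: in the block $u,v\in(0,t_{i-1})$ you only treat $u>v$ (via \eqref{eq.lem5.1-1}); the half $0<u<v<t_{i-1}$ is asserted to be covered but is not of the form of either cited lemma. After the mean-value step the integrand there is $\widehat A(u;s)\,(t_{j-1}-v)^{\alpha-2H}(v-u)^{2H-2}$, and the naive bound $(t_{j-1}-v)^{\alpha-2H}\le h^{\alpha-2H}$ only yields $Ch^{\min\{2\alpha-1,\,\alpha+1-2H\}}$, which falls short of the required $h^{2\alpha-1}$ whenever $\alpha>2-2H$. The paper closes this case (second term of \eqref{eq.J11-1}) by writing $(t_{j-1}-v)^{\alpha-2H}=(t_{j-1}-v)^{\alpha-1}(t_{j-1}-v)^{1-2H}\le h^{\alpha-1}(t_{j-1}-v)^{1-2H}$, so that the $v$-integral pairs into the scale-free Beta integral $\int_u^{t_{j-1}}(t_{j-1}-v)^{1-2H}(v-u)^{2H-2}\mathrm dv=B(2-2H,2H-1)$, leaving $h^{\alpha-1}\int_0^{t_{i-1}}\widehat A(u;s)\,\mathrm du\le Ch^{2\alpha-1}$. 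Both gaps are repairable with the paper's devices, but as written your argument does not cover these two sub-regions, and in the first one both of your proposed mechanisms genuinely fail.
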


\begin{proof}
According to $A=\widehat{A} + \widetilde{A}$ and \eqref{eq.J}, we have $\mathcal{J}= \mathcal{J}_1 + \mathcal{J}_2 + \mathcal{J}_3 + \mathcal{J}_4$ with
\begin{gather*}
\mathcal{J}_1: = \int_{[0,T]^2} \widehat{A}(u;s) \widehat{A}(v;\tau) |u-v|^{2H-2} \mathrm{d} u \mathrm{d} v, \ \ 
\mathcal{J}_2:= \int_{[0,T]^2} \widehat{A}(u;s) \widetilde{A}(v;\tau) |u-v|^{2H-2} \mathrm{d} u \mathrm{d} v, \\
\mathcal{J}_3:= \int_{[0,T]^2} \widetilde{A}(u;s) \widehat{A}(v;\tau) |u-v|^{2H-2} \mathrm{d} u \mathrm{d} v, \ \ 
\mathcal{J}_4:= \int_{[0,T]^2} \widetilde{A}(u;s) \widetilde{A}(v;\tau) |u-v|^{2H-2} \mathrm{d} u \mathrm{d} v.
\end{gather*}

\textbf{Estimate of $\mathcal{J}_1$.} We firstly decompose $\mathcal{J}_1$ into
\begin{align*}
\mathcal{J}_1 &= \int_0^{t_{i-1}} \int_0^{t_{j-1}} \big( (t_{i-1}-u)^{\alpha - 1} - (s-u)^{\alpha - 1} \big) \big( (t_{j-1}-v)^{\alpha - 1} - (\tau-v)^{\alpha - 1} \big) |v-u|^{2H-2} \mathrm{d}v \mathrm{d}u \\
&= \int_0^{t_{i-1}} \int_0^{t_{i-1}}\big( (t_{i-1}-u)^{\alpha - 1} - (s-u)^{\alpha - 1} \big) \big( (t_{j-1}-v)^{\alpha - 1} - (\tau-v)^{\alpha - 1} \big) |v-u|^{2H-2} \mathrm{d}v \mathrm{d}u\\
&\quad\ + \int_0^{t_{i-1}} \int_{t_{i-1}}^{t_{j-1}}\big( (t_{i-1}-u)^{\alpha - 1} - (s-u)^{\alpha - 1} \big) \big( (t_{j-1}-v)^{\alpha - 1} - (\tau-v)^{\alpha - 1} \big) |v-u|^{2H-2} \mathrm{d}v \mathrm{d}u\\
&=: \mathcal{J}_{11}+ \mathcal{J}_{12}.
\end{align*}
In order to estimate $\mathcal{J}_{11}$, we note that for $v\in[0,t_{i-1}]$, one has $\tau-s \leq 2(t_{j-1}-v)$, which implies
\begin{align}\label{eq.mean}
(t_{j-1}-v)^{\alpha - 1} - (\tau-v)^{\alpha - 1} \leq h(t_{j-1}-v)^{\alpha - 2}\leq Ch(t_{j-1}-v)^{\alpha - 2H} (\tau-s)^{2H-2}.
\end{align}
Besides,
\begin{align} \label{eq.J11-1}
&\quad\ \int_0^{t_{i-1}} \int_0^{t_{i-1}} \big( (t_{i-1}-u)^{\alpha - 1} - (s-u)^{\alpha - 1} \big)(t_{j-1}-v)^{\alpha - 2H} |v-u|^{2H-2} \mathrm{d}v \mathrm{d}u\\\notag
&\leq \int_0^{t_{i-1}} \int_{v}^{t_{i-1}} \big( (t_{i-1}-u)^{\alpha - 1} - (s-u)^{\alpha - 1} \big) (t_{j-1}-v)^{\alpha - 2H} (u-v)^{2H-2} \mathrm{d}u\mathrm{d}v\\\notag
&\quad + h^{\alpha-1}\int_0^{t_{i-1}} \int_u^{t_{i-1}} \big( (t_{i-1}-u)^{\alpha - 1} - (s-u)^{\alpha - 1} \big) (t_{j-1}-v)^{1- 2H} (v-u)^{2H-2} \mathrm{d}v\mathrm{d}u\\
&\leq C h^{2\alpha-1}
+Ch^{\alpha-1} \int_0^{t_{i-1}} (t_{i-1}-u)^{\alpha - 1} - (s-u)^{\alpha - 1}\mathrm{d}u\leq C h^{2\alpha-1}, \notag
\end{align}
in which $(t_{j-1}-v)^{\alpha-1}\leq h^{\alpha-1}$ for $v\leq t_{i-1}<t_{j-1}$ was used in the first inequality, \eqref{eq.lem5.1-1}, $t_{i-1}<t_{j-1}$, and \eqref{eq.Betafun} were used in the second inequality, and \eqref{chi} with $\theta=\alpha-1 \in (-1,0)$ was used in the last inequality. Then, the combination of \eqref{eq.mean} and \eqref{eq.J11-1} reveals
\begin{align}\label{eq.J11}
\mathcal{J}_{11} \leq Ch^{2\alpha}(\tau-s)^{2H-2}.
\end{align}
By the virtue of Lemma \ref{eq.ijts} with $k=i$ and $l=j$, as well as \eqref{chi} with $\theta=\alpha-1$,
\begin{align}\label{eq.J12}
\mathcal{J}_{12} &\leq Ch^{\alpha}(\tau-s)^{2H-2} \int_0^{t_{i-1}} (t_{i-1}-u)^{\alpha - 1} - (s-u)^{\alpha - 1} \mathrm{d}u \leq C h^{2\alpha}(\tau-s)^{2H-2}.
\end{align}
Collecting \eqref{eq.J11} and \eqref{eq.J12}, one obtains $\mathcal{J}_{1}=\mathcal{J}_{11}+\mathcal{J}_{12}\leq C h^{2\alpha}(\tau-s)^{2H-2}$.

\textbf{Estimate of $\mathcal{J}_2$.} For $v \in [t_{j-1},\tau]$ and $u \in [0,t_{i-1}]$, $\tau -s = \tau -v + v - s \leq h + v - u \leq 2(v-u)$, which along with \eqref{chi} with $\theta=\alpha-1$ implies
\begin{align*}
\mathcal{J}_2 &= \int_0^{t_{i-1}} \int_{t_{j-1}}^{\tau} \big( (t_{i-1}-u)^{\alpha - 1} - (s-u)^{\alpha - 1} \big) (\tau-v)^{\alpha - 1} (v-u)^{2H-2} \mathrm{d}v \mathrm{d}u \\
&\leq C \int_0^{t_{i-1}} \int_{t_{j-1}}^{\tau} \big( (t_{i-1}-u)^{\alpha - 1} - (s-u)^{\alpha - 1} \big) (\tau-v)^{\alpha - 1} \mathrm{d}v \mathrm{d}u (\tau - s)^{2H-2} \\
&\leq C h^{2\alpha} (\tau - s)^{2H-2}.
\end{align*}

\textbf{Estimate of $\mathcal{J}_3$.} To facilitate the estimation of $\mathcal{J}_3$, we note that
\begin{align*}
\mathcal{J}_3 &= \int_{t_{i-1}}^s \int_0^{t_{j-1}} (s-u)^{\alpha - 1} \big( (t_{j-1}-v)^{\alpha - 1} - (\tau-v)^{\alpha - 1} \big) |v-u|^{2H-2} \mathrm{d}v \mathrm{d}u \\
&= \int_{t_{i-1}}^s \int_0^{t_{i-1}}
(s-u)^{\alpha - 1} \big( (t_{j-1}-v)^{\alpha - 1} - (\tau-v)^{\alpha - 1} \big) (u-v)^{2H-2} \mathrm{d}v \mathrm{d}u\\
&\quad\ + \int_{t_{i-1}}^s \int_{t_{i-1}}^{t_{j-1}} (s-u)^{\alpha - 1} \big( (t_{j-1}-v)^{\alpha - 1} - (\tau-v)^{\alpha - 1} \big) |v-u|^{2H-2} \mathrm{d}v \mathrm{d}u\\
& =: \mathcal{J}_{3,1} + \mathcal{J}_{3,2}.
\end{align*}
It follows from \eqref{eq.mean} and \eqref{eq.lem5.1-2} that
\begin{align*}
\mathcal{J}_{3,1}
&\leq Ch \int_{t_{i-1}}^s \int_0^{t_{i-1}} (s-u)^{\alpha - 1} (u-v)^{2H-2} (t_{j-1}-v)^{\alpha-2H} \mathrm{d}v \mathrm{d}u (\tau-s)^{2H-2} \\
&\leq Ch^{2\alpha} (\tau-s)^{2H-2}.
\end{align*}
The estimate of $\mathcal{J}_{3,2}$ can be divided into two cases: $i < j \leq i+2$ and $i+3 \leq j \leq N$. If $i < j \leq i+2$, then similar to \eqref{eq.3intgrand}, we have
\begin{align} \label{eq.J31C1}
\mathcal{J}_{3,2} &\leq \int_{t_{i-1}}^s \int_{t_{i-1}}^{t_{j-1}} (s-u)^{\alpha - 1} (t_{j-1}-v)^{\alpha - 1} |v-u|^{2H-2} \mathrm{d}v \mathrm{d}u \\
&\leq C h^{2(\alpha+H-1)} \leq C h^{2\alpha} (\tau-s)^{2H-2}, \notag
\end{align}
where the last step used $\tau - s\leq t_{j}-t_{i-1}\leq 3h$. If $i+3 \leq j \leq N$, we
apply \eqref{eq.J31C1} to obtain
\begin{align*}
&\quad\ \int_{t_{i-1}}^s \int_{t_{i-1}}^{t_i} (s-u)^{\alpha - 1} \big( (t_{j-1}-v)^{\alpha - 1} - (\tau-v)^{\alpha - 1} \big) |v-u|^{2H-2} \mathrm{d}v \mathrm{d}u\\
&\leq \int_{t_{i-1}}^s \int_{t_{i-1}}^{t_i} (s-u)^{\alpha - 1} (t_{i}-v)^{\alpha - 1} |v-u|^{2H-2} \mathrm{d}v \mathrm{d}u \leq Ch^{2\alpha} (\tau-s)^{2H-2},
\end{align*}
and utilize Lemma \ref{eq.ijts} with $k=i+1$ and $l=j$ to get
\begin{align*}
&\quad \int_{t_{i-1}}^s \int_{t_i}^{t_{j-1}} (s-u)^{\alpha - 1} \big( (t_{j-1}-v)^{\alpha - 1} - (\tau-v)^{\alpha - 1} \big) |v-u|^{2H-2} \mathrm{d}v \mathrm{d}u\\
 &\leq Ch^{\alpha}(\tau -s)^{2H-2}\int_{t_{i-1}}^s(s-u)^{\alpha - 1} \mathrm{d}u \leq C h^{2\alpha} (\tau -s)^{2H-2}.
\end{align*}
These imply $\mathcal{J}_{32} \leq C h^{2\alpha} (\tau -s)^{2H-2}$ for the case $i+3 \leq j \leq N$. Hence, we have $\mathcal{J}_{3} \leq C h^{2\alpha} (\tau -s)^{2H-2}$.

\textbf{Estimate of $\mathcal{J}_4$.} It can be divided into two cases:\ $j = i + 1$ and $i+2 \leq j \leq N$. When $j = i + 1$,
\begin{align*}
\mathcal{J}_4 &= \int_{t_{i-1}}^s \int_{t_{i}}^{\tau} (s-u)^{\alpha - 1} (\tau-v)^{\alpha - 1} (v-u)^{2H-2} \mathrm{d}v \mathrm{d}u \\
&\leq \int_{t_{i-1}}^s \int_{s}^{\tau} (s-u)^{\alpha - 1} (\tau-v)^{\alpha - 1} (v-s)^{2H-2} \mathrm{d}v \mathrm{d}u \\
&\leq C (\tau-s)^{\alpha+2H-2} \int_{t_{i-1}}^s (s-u)^{\alpha - 1} \mathrm{d}u \leq C h^{2\alpha} (\tau-s)^{2H-2}.
\end{align*}
When $i+2 \leq j \leq N$, for $v \in [t_{j-1},\tau]$ and $u \in [t_{i-1},s]$, we have $$\tau-s \leq (j-i+1)h \leq 3(j-i-1)h \leq 3(v-u),$$ and thus,
\begin{align*}
\mathcal{J}_4 \leq C \int_{t_{i-1}}^s \int_{t_{j-1}}^{\tau} (s-u)^{\alpha - 1} (\tau-v)^{\alpha - 1} \mathrm{d}v \mathrm{d}u (\tau-s)^{2H-2} \leq C h^{2\alpha} (\tau-s)^{2H-2}.
\end{align*}

Finally, gathering the above estimates of $\mathcal{J}_1,\, \mathcal{J}_2,\, \mathcal{J}_3 $ and $\mathcal{J}_4$ together completes the proof.
\end{proof}

\begin{lemma}\label{propKL}
For $\mathcal K$ and $\mathcal L$ respectively given by \eqref{eq.K} and \eqref{eq.L},
there exists some constant $C$ such that
\begin{align*}
\mathcal K+\mathcal L \leq C \mathcal{R}_{H, \alpha}^2(h),
\end{align*}
for any $s\in(t_{i-1},t_i]$,~$\tau\in(t_{j-1},t_j]$ with $1\leq i<j\leq N$, and $\theta,\, \lambda \in (0,1)$.
\end{lemma}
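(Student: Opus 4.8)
The plan is to reduce both $\mathcal{K}$ and $\mathcal{L}$ to the two-dimensional singular integral estimates of Lemma \ref{lem.impLemUse}, exploiting the fact that the Malliavin derivatives of $\xi_s^\theta$ and $\xi_\tau^\lambda$ are controlled by kernels of exactly the shape $(\eta-r)^{\alpha-1}\mathbf{1}_{[0,\eta)}(r)$ occurring there. Since $\xi_s^\theta=(1-\theta)x(\hat s)+\theta x(s)$ by \eqref{eq.MeanThm}, the pointwise bound \eqref{Drxt} gives
\begin{align*}
\|D_{r}\xi_s^\theta\|_4 \leq C\big[(\hat s-r)^{\alpha-1}\mathbf{1}_{[0,\hat s)}(r)+(s-r)^{\alpha-1}\mathbf{1}_{[0,s)}(r)\big],
\end{align*}
and analogously for $\xi_\tau^\lambda$ with $\hat\tau,\tau$ in place of $\hat s,s$. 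Thus each first-order derivative is dominated by a finite sum of terms $(\eta-r)^{\alpha-1}\mathbf{1}_{[0,\eta)}(r)$ with $\eta\in\{\hat s,s,\hat\tau,\tau\}$, matching the first factor in the integrands of \eqref{eq.lem3.1} and \eqref{eq.lem3.2}.

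For $\mathcal{K}$ in \eqref{eq.K} I would treat its two double integrals separately, since $\mathcal{K}$ is their product. Substituting the bound above and splitting $A(\cdot;s)=\widehat A(\cdot;s)+\widetilde A(\cdot;s)$ (and likewise $A(\cdot;\tau)$), the first factor becomes a finite sum of integrals in which the $\widetilde A$ part is precisely the left-hand side of \eqref{eq.lem3.1} and the $\widehat A$ part that of \eqref{eq.lem3.2}, with grid index $i$ and each admissible $\eta$. As Lemma \ref{lem.impLemUse} holds for every $\eta\in[0,T]$, each such integral is at most $C\mathcal{R}_{H,\alpha}(h)$, so the first factor is $\leq C\mathcal{R}_{H,\alpha}(h)$; the second factor is bounded identically with index $j$. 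Multiplying yields $\mathcal{K}\leq C\mathcal{R}_{H,\alpha}^2(h)$.

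The term $\mathcal{L}$ in \eqref{eq.L} is the main obstacle, being a genuine quadruple integral coupling $(r_1,u)$ with $(r_2,v)$. The key is the representation \eqref{Drxt-2}, which gives for $\eta\in\{\hat s,s\}$
\begin{align*}
\|D_{r_2}D_{r_1}\xi_s^\theta\| \leq C\int_{r_1\vee r_2}^{\eta}(\eta-w)^{\alpha-1}(w-r_1)^{\alpha-1}(w-r_2)^{\alpha-1}\,\mathrm{d}w\,\mathbf{1}_{[0,\eta)}(r_1\vee r_2),
\end{align*}
and analogously for $\xi_\tau^\lambda$. Inserting this into $\mathcal{L}$ and pulling the inner $w$-integral outermost by Tonelli (all integrands being nonnegative), the constraint $r_1\vee r_2<w$ decouples the $(r_1,u)$-integral from the $(r_2,v)$-integral, so the whole expression factorizes as
\begin{align*}
C\int_0^{\eta}(\eta-w)^{\alpha-1}g_s(w)g_\tau(w)\,\mathrm{d}w,
\end{align*}
where $g_s(w):=\int_0^T\int_0^T (w-r_1)^{\alpha-1}\mathbf{1}_{[0,w)}(r_1)A(u;s)|r_1-u|^{2H-2}\mathrm{d}u\,\mathrm{d}r_1$ and $g_\tau$ is its $\tau$-analogue.

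Now $(w-r_1)^{\alpha-1}\mathbf{1}_{[0,w)}(r_1)$ is again of the Lemma \ref{lem.impLemUse} shape with $\eta=w$, so the same splitting of $A$ gives $g_s(w)\leq C\mathcal{R}_{H,\alpha}(h)$ and $g_\tau(w)\leq C\mathcal{R}_{H,\alpha}(h)$ uniformly in $w\in[0,T]$. Since $\alpha>0$ makes $\int_0^{\eta}(\eta-w)^{\alpha-1}\mathrm{d}w\leq T^\alpha/\alpha$ finite, we conclude $\mathcal{L}\leq C\mathcal{R}_{H,\alpha}^2(h)$; the $\xi_\tau^\lambda$ contribution is handled identically with $\eta\in\{\hat\tau,\tau\}$, yielding the same $g_sg_\tau$ structure. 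Adding the bounds for $\mathcal{K}$ and $\mathcal{L}$ completes the proof, the decisive point being the Tonelli-based factorization of $\mathcal{L}$, which reduces the four-fold integral to two independent copies of the already-established planar estimate.
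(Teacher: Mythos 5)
Your proposal is correct and follows essentially the same route as the paper: both bound $\mathcal K$ by the product of two planar integrals controlled via \eqref{Drxt} and Lemma \ref{lem.impLemUse}, and both handle $\mathcal L$ by inserting \eqref{Drxt-2} and using the nonnegativity of the integrand (Tonelli) so that the constraint $r_1\vee r_2<w$ decouples the $(r_1,u)$- and $(r_2,v)$-integrals, each bounded by $C\mathcal{R}_{H,\alpha}(h)$, leaving a finite one-dimensional singular integral. The paper merely packages the repeated application of Lemma \ref{lem.impLemUse} as a single supremum-over-$\mu$ estimate \eqref{eq.KHbound}, which is a cosmetic difference from your case-by-case use of $\eta\in\{\hat s,s,\hat\tau,\tau\}$.
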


\begin{proof}
According to Lemma \ref{lem.impLemUse}, one has that for any $\tau \in (t_{j-1},t_j]$ with $1 \leq j \leq N$, 
\begin{align} \label{eq.KHbound}
\sup_{\mu \in [0,T]} \int_0^{\mu} \int_0^T (\mu - r)^{\alpha-1} A(v;\tau) |r - v|^{2H-2} \mathrm{d} v \mathrm{d} r \leq C \mathcal{R}_{H, \alpha}(h).
\end{align}
Then, taking \eqref{Drxt} and \eqref{eq.KHbound} into account shows
\begin{align*}
\mathcal{K} &\le C \sup_{\mu_1 \in [0,T]} \int_0^{\mu_1} \int_0^T (\mu_1 - r_1)^{\alpha-1} A(u;s) |r_1 - u|^{2H-2} \mathrm{d} u \mathrm{d} r_1 \\
&\quad \times \sup_{\mu_2 \in [0,T]} \int_0^{\mu_2} \int_0^T (\mu_2 - r_2)^{\alpha-1} A(v;\tau) |r_2 - v|^{2H-2} \mathrm{d} v \mathrm{d} r_2 \leq C \mathcal{R}_{H, \alpha}^2(h).
\end{align*}
Besides, \eqref{Drxt-2} and \eqref{eq.KHbound} display
\begin{align*}
\mathcal{L}
&\leq C \sup_{\mu \in [0,T]} \int_0^{\mu} \int_{[0,T]^4} (\mu-\eta)^{\alpha-1} (\eta - r_1)^{\alpha - 1} (\eta - r_2)^{\alpha - 1} \mathbf 1_{[0,\eta)} (r_1 \vee r_2) \\
&\qquad\qquad\qquad\qquad\quad\ \times A(u;s) A(v;\tau) |r_1 - u|^{2H-2} |r_2 - v|^{2H-2} \mathrm{d} r_1 \mathrm{d} u \mathrm{d} r_2 \mathrm{d} v \mathrm{d} \eta \\
&\leq C \mathcal{R}_{H, \alpha}^2(h) \sup_{\mu \in [0,T]} \int_0^{\mu} (\mu-\eta)^{\alpha-1} \mathrm{d} \eta \leq C \mathcal{R}_{H, \alpha}^2(h).
\end{align*}
The proof is complete.
\end{proof}


\textbf{Acknowledgments.} The authors are grateful to Professor Lei Li for helpful discussions.

\bibliographystyle{plain}
\bibliography{references}

\end{document}